\newcommand\myshade{90}
\colorlet{mylinkcolor}{violet}
\colorlet{mycitecolor}{YellowOrange}
\colorlet{myurlcolor}{Aquamarine}
\DeclareMathOperator{\esssup}{ess \, sup}
\newcommand{\param}{ \sigma, r, R}
\newcommand{\params}{ -\sigma, 1-r, R}
\newcommand{\rR}{r, R}
\newcommand{\md}{\mathrm{d}}
\newcommand{\g}{\zeta}
\newcommand{\dpar}{ \md \sigma \, \md R \, \md r}
\newcommand{\dstar}{ \md \sigma \, \md R \, \md r \, \md v_* \md I_*}
\newcommand{\dall}{ \md \sigma \, \md R \, \md r \, \md v_* \md I_* \, \md v \, \md I}
\newcommand{\ipar}{\int_{[0,1]^2 \times \mathbb{S}^2_+} }
\newcommand{\la}{\langle}
\newcommand{\ra}{\rangle}
\newcommand{\binf}{4 \pi \| b^{\infty}\|_{L^\infty} }
\newcommand{\constLpO}{C_0}
\newcommand{\constLpinf}{C_\infty}
\newcommand{\x}{(x)}
\newcommand{\constE}{E_{t_0}}
\newcommand{\constEinf}{E^\infty_{t_0}}
\newcommand{\constEprop}{E}
\newcommand{\constEinfprop}{E^\infty}
\newcommand{\consta}{a}
\newcommand{\constainf}{a^\infty}
\newcommand{\constaprop}{a}
\newcommand{\constainfprop}{a^\infty}
\newcommand{\constLpprop}{ \beta_k}
\newcommand{\constLinfprop}{ \beta^{\infty}_k}
\newcommand{\constprophigh}{E}
\newcommand{\constproplow}{\tilde{\mathcal{E}}}
\newcommand{\constak}{a_k}
\newcommand{\constaks}{a_k^*}
\newcommand{\Proof}{\noindent\emph{Sketch of the proof.} }
\newcommand{\EndProof}{\hfill $\square$ \\ \indent}
\newtheorem{theorem}{Theorem}[section]
\newtheorem{lemma}[theorem]{Lemma}
\newtheorem{proposition}[theorem]{Proposition}
\newtheorem{remark}[theorem]{Remark}
\begin{document}

\bibliographystyle{abbrv}

	\title[Integrability propagation for  polyatomic gases]{ Integrability propagation for a Boltzmann system describing polyatomic gas mixtures}
	
		\author[R. Alonso]{Ricardo Alonso}
	\address{Texas A\&M, Division of arts \& sciences, Education City, Doha, Qatar }
	\email{ricardo.alonso@qatar.tamu.edu}

	\author[M. Coli\'{c}]{Milana \v Coli\'{c}}
	\address{Department of Mathematics and Informatics, 
		Faculty of Sciences, University of Novi Sad, 
		Trg Dositeja Obradovi\'ca 4, 21000 Novi Sad, Serbia}
	\address{
		Institute for Mathematics and Scientific Computing, University of Graz, Heinrichstr. 36, 8010 Graz, Austria
}
	\email{milana.colic@dmi.uns.ac.rs}

\begin{abstract}
This paper reviews results on the scalar Boltzmann equation for a single-component polyatomic gas with continuous internal energy. For the space homogeneous problem, $L^1$-theory is established, for solutions with initial strictly positive mass and bounded energy, which enables  to solve the Cauchy problem for initial data with $L^1_{2^+}$-moments  using the comparison principle for ODEs. Then, deriving entropy-based estimates,   $L^p$-integrability properties of the solution are explored, $p\in (1,\infty]$. All these analytical results hold under a specific assumption on the collision kernel corresponding to cut-off and hard-potentials type. A mean to verify  physical applicability of the model  is to evaluate   the corresponding Boltzmann collision operator and to derive models for transport coefficients in terms of the collision kernel parameters.  Comparison with experimental data for polytropic gases determines values of these parameters  showing the  physical relevance of the collision kernel. 
\end{abstract}

\maketitle

\medskip

\textbf{Keywords.} Boltzmann equation, polyatomic gases, Cauchy theory, integrability propagation, transport coefficients.\\

\textbf{AMS subject classifications.}  35Q20, 76P05, 82C05.
	
\tableofcontents	
	
\section{Introduction}

The collisional kinetic theory and the Boltzmann equation provide a robust framework for the description of non-equilibrium processes in gas flows \cite{Cercignani}. Ty\-pi\-cally they occur at large Knudsen number, which is given by the ratio between the mean free path and an observation length scale. Some examples include rarefied regime (vacuum technology, high-altitude flights) or  microscopic setting (Micro- and Nano-Electro-Mechanical-Systems devices) \cite{micro}, when the classical fluid dynamic models, such as Navier-Stokes-Fourier law, become inadequate. 

While the Boltzmann equation is originally derived for the description of mon\-atomic gases, it is clear that  nowadays applications require to step out from this  framework. For instance, Earth's atmosphere is a mixture of mainly diatomic nitrogen ($\sim$78\%) and oxygen ($\sim$21\%), while monatomic argon as a third major constituent is present with only 0.93\%. In that regard, the physical scenario for this paper is a single polyatomic gas (gas whose molecule is made up of more than one atom).  

Polyatomic gases are characterized by the presence of an internal structure, typically described by an internal energy variable. There are two different pathways depending on the nature of this variable: (i) discrete internal energy approach  \cite{Groppi-Spiga, Bisi-Groppi-Spiga, Kusto, Gio} leading to  a system of kinetic equations for each energy level and (ii) continuous internal energy approach \cite{Bourgat, Des, Des-Mon-Sal} with a single Boltzmann-like equation.  Recently, these two models have been  unified by considering a general internal state \cite{Bor-Bisi-Groppi}. 

This paper will focus on the approach based on the continuous internal energy. Within this setting, the modelling and analysis of the Boltzmann-like equation with the nonlinear collision operator is an active field of research.  For instance, in the  perturbation framework, compactness properties of the asso\-ci\-at\-ed linearized Boltzmann operator are studied in \cite{Bern, Brull, Bern-nous}. The global well-posedness for bounded mild solutions near global equilibrium on the torus is established in \cite{Duan-Li}, while \cite{Ko-Son} constructs large-amplitude solutions with small initial relative entropy and obtains convergence to the global equilibrium state with an exponential rate.

The goal of this paper is to review results on  (i)  analysis of the space homogeneous problem that proposes a suitable collision  kernel of the Boltzmann operator and (ii) physical interpretation of such collision kernel through eva\-lu\-at\-ion of the collision operator and modelling of transport coefficients enabling the comparison with experimental data. An outcome of such research is the analysis of the physically relevant Boltzmann model. 

More precisely, the paper is organized as follows. The polyatomic Boltzmann equation is presented  in Section \ref{Sec: Model}. 
The space homogeneous problem  is studied in  Section \ref{Sec: Analysis}, for a collision kernel with cutoff and of $\zeta$-hard-potentials type in both velocity and internal energy, $\zeta\in(0,2]$. The methodology is of iterative nature and consists of the following steps:
\begin{itemize}
	\item $L^1$-theory (Section \ref{Sec: L1}, adaptation of the mixture framework from  \cite{Alonso-Colic-Gamba} to the single polyatomic case,   with certain improved    aspects of \cite{Gamba-Colic-poly}). Theorem \ref{Th L1} states that a solution  with strictly positive mass ($L^1_0$-moment) and finite energy ($L^1_2$-moment)  will instantaneously generate $L^1_k-$moments of  any order $k>2$. Moreover, any initially  finite $L^1_k$-moment will remain finite for all times, i.e. will propagate.
	\item Existence and Uniqueness Theory (Section \ref{Sec: Exi}, review of  \cite{Alonso-Colic-Gamba,Gamba-Colic-poly}). Theorem \ref{Th 2} resolves  the Cauchy problem  for initial data with strictly positive $L^1_0$-moment and finite  $L^1_{2^+}$-moment. 
	\item $L^p$-theory (Section \ref{Sec: Lp}). First, Theorem \ref{Th prop Lp} revises the propagation result from \cite{Alonso-Colic}. If  a solution initially has bounded $L^p_k$- and $L^1_{k+\zeta+1}$-moment, then $L^p_k$ moment will remain bounded for all times, which holds for any $k\geq0$ and $p\in(1,\infty]$ that might be sometimes constrained depending on the structure of a polyatomic molecule (manifested through a parameter $\alpha>-1$ of the polyatomic Boltzmann collision operator) and the choice of the collision kernel or the value of $\zeta$.  The proof relies on the propagation result of $L^1_k$-moments. Moreover, in this paper, we bring a new result in Theorem \ref{Th gen Lp} on  $L^p$-tails generation property. Namely, if a solution    initially has $L^p_0$-  and $L^1_{\zeta+1}$-moment bounded, then  $L^p_k$-moment will emerge after some time $t_0>0$. If additionally  $L^1_{k+\zeta+1}$-moment is initially bounded, then generation of $L^p_k$ moments holds for any time. The proof relies on $L^p_0$-propagation property and both $L^1$-generation and $L^1$-propagation results.
\end{itemize}
Inspired by this analysis, Section \ref{Sec: phys} studies the physical interpretation of the proposed collision kernel  by reviewing results of \cite{Djordj-Colic-Torr, Djordj-Obl-Colic-Torr} supplemented by the \emph{Mathematica} code \cite{Djordj-Colic-Torr-math-2}. It is worthwhile to mention that successful matching with experimental data  crucially  depends on incorporating  frozen collisions \cite{Alonso-Colic-PSPDE}.

\subsection{Notation and functional spaces}

For any $(v,I) \in \mathbb{R}^3\times \mathbb{R}_+$, $m>0$, we define  the Lebesgue brackets
\begin{equation*}
	\langle v, I \rangle = \sqrt{1+\tfrac{1}{2} |v|^2 + \tfrac{1}{m} I }.
\end{equation*}
For $p\geq 1$ and $k\geq 0$, the polynomially weighted $L^p$ space is defined as
\begin{equation}\label{Lp i}
	L^p_{k}(\mathbb{R}^3\times\mathbb{R}_+) = \left\{ \chi(v,I):  \int_{\mathbb{R}^3\times\mathbb{R}_+} \left(  |\chi(v,I)| \langle v, I \rangle^{k} \right)^p \md v\, \md I =:  \| \chi \|_{L^p_{k}}^p  < \infty \right\},
\end{equation}
while for $p=\infty$ the following definition is used
\begin{equation}\label{L inf i}
	L^\infty_{k}(\mathbb{R}^3\times\mathbb{R}_+) = \left\{ \chi(v,I): \esssup  |\chi(v,I)| \langle v, I \rangle^{k} =:  \| \chi \|_{L^\infty_{k}}  < \infty \right\}.
\end{equation}
When $k=0$  we omit the subscript. 

\section{Modelling of polyatomic gases in   continuous kinetic approach}\label{Sec: Model}

Due to its complex structure, a polyatomic molecule might posses nonnegligible rotational and vibrational  energies, besides the   translational one. A modelling choice for the continuous kinetic approach is to capture these peculiarities with one continuous variable, a microscopic internal energy $I \geq 0$.  Thus,  a molecule of mass $m >0$ having the velocity $v \in \mathbb{R}^3$ has  the total energy  
\begin{equation*}
	\frac{m}{2} | v |^2 + I.
\end{equation*}
Then, at any time $t>0$ and space position $x\in \mathbb{R}^3$, the gas is modelled by a distribution function $f:=f(t,x,v,I)\geq 0$, which changes due to transport and particle  collisions that we precise below. 

\subsection{Collisions}
A collision between two molecules of the same mass $m>0$, localized in time and space, affects  the microscopic state of the involved molecules described by the velocity-internal energy pairs $(v,I)$ and $(v_*,I_*)$, that change to $(v',I')$ and $(v'_*,I'_*)$, respectively. If such a collision is assumed to preserve   momentum and total energy of the two molecules, 
\begin{equation}\label{coll CL}
	v' + v'_*  = v + v_*, \qquad \frac{m}{2} |v'|^2 + I' +  \frac{m}{2} |v'_*|^2 + I'_* = \frac{m}{2} |v|^2 + I +  \frac{m}{2} |v_*|^2 + I_*,
\end{equation}
then it is possible to perform  parametrization the collision using the so-called  Borgnakke-Larsen procedure \cite{Bourgat,Des-Mon-Sal}, based on repartition of the  energy $E$,
\begin{equation}\label{coll rules}
	\begin{alignedat}{2}
		v'  &= \frac{v+v_*}{2} + \sqrt{\frac{ R  \, E}{m}} \sigma,  \qquad
		I' &&=r (1-R)E, \\
		v'_{*} &=  \frac{v+v_*}{2}  -   \sqrt{\frac{  R \,  E}{m}} \sigma,
		\qquad I'_* &&= (1-r)(1-R) E, \quad E := \frac{m}{4} |v-v_*|^2 + I +  I_*,
	\end{alignedat}
\end{equation}
where $\sigma\in\mathbb{S}^2$ is an  \emph{angular} variable and $r, R \in [0,1]$ are \emph{energy exchange} variables.

\subsection{Collision operator in a strong form} For suitable distribution functions $f(v,I) \geq 0$, $g(v,I) \geq 0$, and a parameter $\alpha >-1$, the collision operator reads
\begin{multline}\label{coll operator general}
	Q(f,g)(v,I) = \int \left\{ f(v',I')g(v'_*,I'_*) \left(\frac{I I_*}{I' I'_*}\right)^{\alpha}   - f(v,I)g(v_*, I_*) \right\} \\ \times  \mathcal{B}(v,v_*,I,I_*,\param)  \, d_\alpha(r,R)  \, \dstar,
\end{multline}
where
\begin{equation}\label{d alpha}
	d_\alpha(r,R)=	 r^{\alpha}(1-r)^{\alpha} \, (1-R)^{2\alpha+1} \, \sqrt{R},
\end{equation}
and 
the collision kernel $ \mathcal{B}(v,v_*,I,I_*,\param)\geq0$ is   invariant with respect to   
\begin{equation}\label{micro rev}
	\begin{split}
		(v,v_*,I,I_*,\param)  & \leftrightarrow  \left(v',v'_*,I',I'_*, \hat{u}, \frac{I}{I+I_*} ,\frac{m \left|u\right|^2}{4 E} \right),\\ 
		(v,v_*,I,I_*,\param)  & \leftrightarrow (v_*,v,I_*,I,\params),  \ u=v-v_*, \hat{u} = \frac{u}{\left|u\right|}.
	\end{split}
\end{equation}
An exact form of $ \mathcal{B}$ for polyatomic gases is of crucial importance for the kinetic model.  However, in the absence of explicit kernels from physics, an educated assumption  is imposed facilitating the   analysis. First, $ \mathcal{B}$ is assumed to be factorized as an angular part $b \geq 0$ times the  part $ \tilde{\mathcal{B}}\geq0$  independent of $\sigma$ satisfying \eqref{micro rev},
\begin{equation}\label{assumpt B factor}
	\mathcal{B}(v,v_*,I,I_*,\param)   =  b(\hat{u} \cdot\sigma) \ \tilde{\mathcal{B}}(v,v_*,I,I_*,\rR). 
\end{equation}
As a byproduct, this choice enables to reduce   domain for  $\sigma$ from  $\mathbb{S}^2$ to
\begin{equation*}
	\mathbb{S}^2_+ = \left\{ \sigma \in \mathbb{S}^2: \hat{u} \cdot \sigma \geq 0 \right\},  \  \text{for the fixed} \ \hat{u}:= \frac{u}{|u|},
\end{equation*}
and to prescribe, without   loss of generality, $b$  supported on $	\mathbb{S}^2_+$.\footnote{
	Since the product $f' f'_*$ appearing in $Q(f,f)(v,I)$ from \eqref{coll operator general}  is invariant under the change of variables $(\sigma, r) \rightarrow (- \sigma, 1-r)$ and the reminding  terms are symmetric with respect to it,  $b(\hat{u} \cdot  \sigma)$ can be replaced by its symmetrized version $\{b(\hat{u} \cdot  \sigma) + b( - \hat{u} \cdot  \sigma)\} \mathds{1}_{\hat{u} \cdot  \sigma \geq 0}$.} In this paper, we consider the cut-off case, meaning that  $b$ is integrable over $\mathbb{S}^2_+$,  
\begin{equation}\label{b integrable}
	\| b \|_{L^1} = \int_{	\mathbb{S}^2_+} b(\hat{u}\cdot\sigma) \, \md \sigma  = 2 \pi \int_{0}^1 b(x) \, \md x < \infty.
\end{equation}
Moreover, we consider hard-potentials type  kernels  $\tilde{\mathcal{B}}$, 
\begin{equation}\label{assump-tilde-B}
	\tilde{b}^{lb}(r, R) \,   \left( \frac{E}{m} \right)^{\g/2} \leq  \tilde{\mathcal{B}}(v,v_*,I,I_*,\rR) \leq     \tilde{b}^{ub}(r, R) \,     \left( \frac{E}{m} \right)^{\g/2}, \quad \g \in (0,2],
\end{equation}
where $E$ is the collisional energy \eqref{coll rules}, with the   integrability conditions 
\begin{equation}\label{b r R integrable}
	\tilde{b}^{lb}(r, R), \ \tilde{b}^{ub}(r, R)  \in L^1([0,1]^2; \,  d_\alpha(\rR) \, \md r \, \md R).
\end{equation}
Integrability hypotheses \eqref{b integrable} and \eqref{b r R integrable} imply well-defined constants
\begin{equation}\label{kappa p-p}
	\left(	\begin{matrix}
		\kappa^{lb} \\[5pt]
		\kappa^{ub}
	\end{matrix} \right)
	= 	 \int_{[0,1]^2 \times \mathbb{S}^2_+} b(\hat{u}\cdot \sigma) \,   
	\left( \begin{matrix}
		\tilde{b}^{lb}(r, R) \\[5pt]
		\tilde{b}^{ub}(r, R)
	\end{matrix} \right)
	d_\alpha(\rR) \, \dpar,
\end{equation}
and allow for the collision operator \eqref{coll operator general} to be written as a  gain term $Q^+(f,g)(v,I)$ minus the loss term $Q^-(f,g)(v,I)$, which is  proportional to  the distribution function $f$ itself and the collision frequency $\nu[g](v,I)$, namely,
\begin{equation}\label{coll operator split}
	Q(f,g)(v,I) =	Q^+(f,g)(v,I)  -  f(v,I) \	\nu[g](v,I),
\end{equation}
where the gain part is 
\begin{multline}\label{coll operator gain}
	Q^+(f,g)(v,I)  = \int_{\mathbb{R}^3\times\mathbb{R}_+} \int_{[0,1]^2 \times \mathbb{S}^2_+} f(v',I')g(v'_*,I'_*) \left(\frac{I I_*}{I' I'_*}\right)^{\alpha}   \\ \times  \tilde{\mathcal{B}}(v,v_*,I,I_*,\rR)  \, b(\hat{u}\cdot\sigma)\, d_\alpha(r,R)  \, \dstar,
\end{multline}
with the primed quantities as in \eqref{coll rules}
and the collision frequency
\begin{multline}\label{coll frequency}
	\nu[g](v,I) =  \int_{\mathbb{R}^3\times\mathbb{R}_+} \int_{[0,1]^2 \times \mathbb{S}^2_+}  g(v_*,I_*) 
	\\ \times   \tilde{\mathcal{B}}(v,v_*,I,I_*,\rR)  \, b(\hat{u}\cdot\sigma)\, d_\alpha(r,R)  \, \dstar.
\end{multline}

\subsection{The weak form of the collision operator} 
For a suitable test function $\chi(v,I)$, the collision operator \eqref{coll operator split} can be written in the weak form as
\begin{equation}\label{weak form Lp}
	\begin{split}
		\int_{\mathbb{R}^3\times\mathbb{R}_+} Q&(f,g)(v, I) \, \chi(v,I) \, \md v\, \md I  
		\\	&=  \int  \left\{  \chi(v', I')  - \chi(v, I) \right\} \, f(v, I) \, g(v_*, I_*)   
		\\  & \qquad \qquad \times  \tilde{\mathcal{B}}(v,v_*,I,I_*,\rR)  \, b(\hat{u}\cdot\sigma)\, d_\alpha(r,R)  \, \dall.
	\end{split}
\end{equation}
Additionally, for the bilinear structure,
\begin{equation}\label{weak form L1}
	\begin{split}
		\int_{\mathbb{R}^3\times\mathbb{R}_+} Q&(f,g)(v, I) \, \chi(v,I) \, \md v\, \md I   +  \int_{\mathbb{R}^3\times\mathbb{R}_+} Q(g,f)(v, I) \, \chi(v,I) \, \md v\, \md I 
		\\	&=   \int  \left\{  \chi(v', I') + \chi(v'_*, I'_*) - \chi(v, I)- \chi(v_*, I_*) \right\} \, f(v, I) \, g(v_*, I_*)   
		\\  & \qquad \qquad  \times  \tilde{\mathcal{B}}(v,v_*,I,I_*,\rR)  \, b(\hat{u}\cdot\sigma)\, d_\alpha(r,R)  \, \dall.  
	\end{split}
\end{equation}

\subsection{H-theorem}

The following theorem focuses on the properties of the entropy dissipation functional and characterizes the equilibrium distribution function  \cite{Bourgat}.  
\begin{theorem}[H-theorem] 
	Let the collision  kernel $\mathcal{B}$ be positive  almost everywhere and   $f\geq 0$ such that the collision operator $Q(f,f)$ and the entropy production ${\mathcal D}(f)$ are well defined. Then the following properties hold:
	\begin{itemize}
		\item[i.] Entropy production is non-positive, that is
		\begin{equation}\label{entropy production }
			\mathcal{D}(f) :=   \int_{\mathbb{R}^3\times\mathbb{R}_+} Q(f,f)(v,I) \, \log\!\left(f(v,I) I^{-\alpha}\right) \md v \, \md I \leq 0.
		\end{equation}
		\item[ii.] The three following properties are equivalent
		\begin{itemize}
			\item[(1)]${\mathcal D}(M) =0$,\\
			\item[(2)] $Q(M,M) =0$  for all  $(v,I) \in \mathbb{R}^3\times\mathbb{R}_+$,\\
			\item[(3)] There exists $\rho\geq 0, U \in \mathbb{R}^3, \ \text{and} \ T>0$, such that 
			\begin{equation}\label{Maxwellian}
				M(v, I) = \frac{\rho}{m(k_B T)^{\alpha +1} \Gamma(\alpha+1)} \left( \frac{m}{2 \pi k_B T} \right)^{3/2} I^\alpha \ e^{- \frac{1}{k_B T} \left( \frac{m}{2} \left| v - U \right|^2 + I \right)} ,
			\end{equation}
			where  	$\Gamma$ is   Gamma function and $k_B$ is the Boltzmann constant. 
		\end{itemize}				
	\end{itemize}
\end{theorem}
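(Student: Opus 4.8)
\medskip
\noindent\textbf{Proof proposal.}
The plan is to treat the inequality (i) and the equivalences (ii) separately, reducing (i) --- after absorbing the internal-energy weights --- to the classical Boltzmann entropy estimate, and building (ii) around its equality case. For (i), I would first pass to the normalized density $\psi := f\, I^{-\alpha}$, so that $f=\psi\, I^{\alpha}$ and the gain--loss bracket in \eqref{coll operator general} factors as
\begin{equation*}
	f(v',I')\,f(v'_*,I'_*)\Big(\tfrac{I I_*}{I' I'_*}\Big)^{\alpha} - f(v,I)\,f(v_*,I_*) \;=\; (I I_*)^{\alpha}\,\big\{\, \psi(v',I')\,\psi(v'_*,I'_*) - \psi(v,I)\,\psi(v_*,I_*) \,\big\},
\end{equation*}
while $\log\!\big(f\,I^{-\alpha}\big)=\log\psi$. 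Writing $\mathcal{D}(f)$ as an integral over all collision variables and symmetrizing by means of the two invariances recorded in \eqref{micro rev} --- the pre-/post-collisional involution and the particle exchange $(v,I)\leftrightarrow(v_*,I_*)$ --- the integrand becomes, after averaging the four resulting copies,
\begin{equation*}
	\tfrac14\,(I I_*)^{\alpha}\,\big\{\psi'\psi'_* - \psi\psi_*\big\}\,\log\!\frac{\psi\psi_*}{\psi'\psi'_*}\;\tilde{\mathcal{B}}\, b\, d_\alpha(r,R)\,\dall,
\end{equation*}
where $\psi'=\psi(v',I')$, $\psi'_*=\psi(v'_*,I'_*)$ and $\psi_*=\psi(v_*,I_*)$. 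The key bookkeeping is that the weighted measure $(I I_*)^{\alpha}\,\tilde{\mathcal{B}}\,b\,d_\alpha(r,R)\,\dall$ is invariant under both maps --- this is the Borgnakke-Larsen Jacobian identity already used in passing from \eqref{coll operator general} to the weak forms \eqref{weak form Lp}--\eqref{weak form L1}, where the $I^{\alpha}$-factors get absorbed via the change of variables $(v,v_*,I,I_*,\param)\mapsto(v',v'_*,I',I'_*,\ldots)$. Since $(x-y)\log(y/x)\le0$ for all $x,y>0$, with equality iff $x=y$, and $\tilde{\mathcal{B}},b,d_\alpha\ge0$, integration yields $\mathcal{D}(f)\le0$.

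For (ii) I would argue $(3)\Rightarrow(2)\Rightarrow(1)\Rightarrow(3)$. For $(3)\Rightarrow(2)$: from \eqref{Maxwellian}, $\log\!\big(M\,I^{-\alpha}\big)=c_0-\tfrac{1}{k_BT}\big(\tfrac{m}{2}|v-U|^2+I\big)$ is an affine combination of $1$, $v$ and the per-particle quantity $\tfrac{m}{2}|v|^2+I$, each of which is a collision invariant by \eqref{coll CL}; hence, setting $\psi_M:=M I^{-\alpha}$, one has $\psi_M(v',I')\,\psi_M(v'_*,I'_*)=\psi_M(v,I)\,\psi_M(v_*,I_*)$, so the bracket in the integrand of \eqref{coll operator general} with $f=g=M$ vanishes identically and $Q(M,M)=0$ for every $(v,I)$. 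The implication $(2)\Rightarrow(1)$ is immediate from \eqref{entropy production }. For $(1)\Rightarrow(3)$: since the symmetrized integrand above is pointwise $\le0$, the hypothesis $\mathcal{D}(M)=0$ forces it to vanish almost everywhere, and as $\mathcal{B}>0$ a.e.\ this yields $\psi_M(v',I')\,\psi_M(v'_*,I'_*)=\psi_M(v,I)\,\psi_M(v_*,I_*)$ for a.e.\ choice of all variables; solving this multiplicative functional equation shows that $\log\psi_M$ must be an affine combination of $1,v,\tfrac{m}{2}|v|^2+I$, and integrability of $M=\psi_M I^{\alpha}$ forces the coefficient of $\tfrac{m}{2}|v|^2+I$ to be strictly negative, which after renaming the constants is precisely \eqref{Maxwellian} (with $T>0$, $U\in\mathbb{R}^3$, $\rho\ge0$).

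I expect the only genuinely delicate step to be the last one, $(1)\Rightarrow(3)$. Converting the a.e.\ identity $\psi_M(v',I')\,\psi_M(v'_*,I'_*)=\psi_M(v,I)\,\psi_M(v_*,I_*)$ into the claimed affine form requires (a) upgrading the merely measurable $\psi_M$, e.g.\ by a mollification/averaging argument after first arguing $M>0$ almost everywhere on its support, and (b) checking that the Borgnakke-Larsen parametrization \eqref{coll rules} is rich enough: the angular variable $\sigma$ together with the repartition variables $r,R$ must sweep out all post-collisional states compatible with \eqref{coll CL}, including the splitting of internal energy, so that the classical identification of multiplicative collision invariants --- as in the monatomic theory, here adapted to the additional variable $I$ --- goes through. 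Everything else --- the change of variables to $\psi$, the fourfold symmetrization, and the direct verification for $M$ --- is routine once the measure identities underlying \eqref{weak form Lp}--\eqref{weak form L1} are in hand; the one point to watch is the consistent tracking of the weights $d_\alpha$ from \eqref{d alpha} and the factor $(II_*/I'I'_*)^{\alpha}$.
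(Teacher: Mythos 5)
The paper does not prove this theorem: it is quoted as a known result and attributed to \cite{Bourgat}, so there is no in-paper argument to compare against. Your proposal reproduces the standard proof from that reference — pass to $\psi=fI^{-\alpha}$ so the bracket in \eqref{coll operator general} becomes $(II_*)^{\alpha}\{\psi'\psi'_*-\psi\psi_*\}$, symmetrize using the invariance of the weighted measure $(II_*)^{\alpha}\tilde{\mathcal{B}}\,b\,d_\alpha\,\dall$ under the pre-/post-collisional involution and particle exchange (the same Jacobian identity underlying \eqref{weak form Lp}--\eqref{weak form L1}), and conclude from $(x-y)\log(y/x)\le 0$ — and your cycle $(3)\Rightarrow(2)\Rightarrow(1)\Rightarrow(3)$ is the standard one. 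You correctly identify the only genuinely delicate step, namely the classification of the multiplicative collision invariants of the Borgnakke--Larsen parametrization as spanned by $1$, $v$, and $\tfrac{m}{2}|v|^2+I$; that is precisely where the work lies in \cite{Bourgat}, and your outline is consistent with it.
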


\section{Space homogeneous Boltzmann equation}\label{Sec: Analysis}

In the following  Sections \ref{Sec: L1}, \ref{Sec: Exi} and \ref{Sec: Lp}, the distribution function is assumed  independent of  the spatial variable $x$. We consider the Cauchy problem 
\begin{equation}\label{Cauchy}
	\partial_t f(t,v,I) = Q(f(t,\cdot),f(t,\cdot))(v,I), \quad f(0,v,I) = f_0(v,I),
\end{equation}
with the collision operator \eqref{coll operator split} and assumptions \eqref{assumpt B factor}-\eqref{b integrable}-\eqref{assump-tilde-B}-\eqref{b r R integrable}. 

The aim is to explore   $L^p_k$-estimates of the collision operator and then to apply them on the Boltzmann equation. At the heart of these questions are the estimates on Dirichlet forms
\begin{equation}\label{Diri}
	\mathcal{Q}^{(\pm)}_{p}[f, g] = 	\int_{{\mathbb{R}^3\times\mathbb{R}_+} }   Q^{(\pm)}(f, g) \ f^{p-1} \ \la v,I \ra^{k \, p}  \ \md v \, \md I, \quad  \ p \geq 1, \, k\geq 0,
\end{equation}
since  multiplication of the Boltzmann equation \eqref{Cauchy}   with $f^{p-1}  \la v,I \ra^{k \, p}$  and then integration with respect to $(v,I)$ gives
\begin{equation}\label{ODE}
	\tfrac{1}{p} \  \partial_t \| f   \|^p_{L^p_{k}} = 	 \mathcal{Q}_p[f, f], \quad p\geq 1.
\end{equation}
Thus, the goal is to find suitable estimates on $ \mathcal{Q}_p[f, f]$ in terms of $\| f   \|^p_{L^p_{k}}$ and apply ODE comparison theory to conclude on the behavior of solution in terms of $L^p_k$-norms. The particular case $p=\infty$ is obtained by carefully studying the involved constants and taking the limit $p\rightarrow\infty$.

\subsection{$L^1$-theory}\label{Sec: L1}

Estimate on Dirichlet form  \eqref{Diri}  for $p=1$ requires to study a bi-linear form \eqref{weak form L1} and find a  suitable averaging property over the variables $(\param)$  parametrizing the collision \eqref{coll CL}. To that aim, the factorized form of the collision kernel \eqref{assumpt B factor} is used, which splits dependency of the collision kernel on the variables $(\param)$ that, in turn, become variables of the integration in the Averaging Lemma and on  the remaining variables  $(v, v_*, I, I_*)$. Thus, the weak form \eqref{weak form L1} together with the assumption \eqref{assumpt B factor}  imply
\begin{align}
	\mathcal{Q}_{1}&[f, g] + 	\mathcal{Q}_{1}[g, f]  \nonumber
	\\ &=	  \int  \left\{  \la v', I' \ra^{k}  +  \la v'_*, I'_* \ra^{k} - \la v, I \ra^{k} -  \la v_*, I_* \ra^{k} \right\}  f(v, I) \, g(v_*, I_*)  \nonumber
	\\   &\qquad  \qquad \times    \tilde{\mathcal{B}}(v,v_*,I,I_*,\rR)  \, b(\hat{u}\cdot\sigma)\, d_\alpha(r,R)  \, \dall \label{Diri 1 full} \\
	&\leq  	\int_{(\mathbb{R}^3\times\mathbb{R}_+)^2 }  f \, g_*  \left( \frac{E}{m} \right)^{\g/2} \nonumber  \\
	&\qquad  \qquad 	\times  \left\{  \mathcal{S}_k(v, v_*, I, I_*) - \kappa^{lb} \left( \la v, I \ra^{k}  +  \la v_*, I_* \ra^{k} \right) \right\}  \, \md v_* \, \md I_* \, \md v \, \md I, \label{Diri 1} 
\end{align}
where $\kappa^{lb}$ is constant \eqref{kappa p-p}  and the averaging operator $\mathcal{S}_k$ is defined by
\begin{multline}\label{S op L1}
	\mathcal{S}_k(v, v_*, I, I_*)  \\ = 	\ipar  \left(  \la v', I' \ra^{k} + \la v'_*, I'_* \ra^{k}  \right)   b(\hat{u}\cdot \sigma) \,  \tilde{b}^{ub}(r, R)  \,  d_\alpha(r,R)  \,\dpar.
\end{multline}
The first step is to find an estimate on the operator \eqref{S op L1}, as presented in two upcoming lemmas.

\begin{lemma}[Energy Identity Lemma IV.2 from  \cite{Gamba-Colic-poly}]\label{Lemma en id}
	Let  $v',v'_*,I',I'_*$ be defined  in \eqref{coll rules}. There exist functions $s\in[0,1]$ and $\lambda \geq 0$ which depend on   $v, v_*,I, I_*,R$  such that  the following representation holds
	\begin{equation}\label{en id lemma}
		\begin{split}
			\la v', I' \ra^2 &= E^{\la \ra} \left( \tfrac{s}{2} + r  (1-s) +   \lambda \hat{V} \cdot\sigma \right),   \\
			\la v'_*, I'_* \ra^2 &= E^{\la \ra} \left( \tfrac{s}{2} + (1-r)  (1-s) - \lambda \hat{V} \cdot\sigma \right), \quad V = \frac{v+v_*}{2},  \ \hat{V}:= \frac{V}{|V|},
		\end{split}
	\end{equation}
	with $E^{\la \ra} = \la v, I \ra^2  + \la v_*, I_* \ra^2$. Moreover, $\pm \lambda \leq \tfrac{s}{2}$ and   \eqref{en id lemma} is bounded by
	\begin{equation}\label{en id lemma est}
		\begin{split}
			\la v', I' \ra^2 & \leq E^{\la \ra} \Big( r \, (1-s)   + \tfrac{s}{2} \left( {1+ |\hat{V} \cdot\sigma|} \right) \Big),  \\
			\la v'_*, I'_* \ra^2 & \leq   E^{\la \ra} \Big( (1-r)  \,  (1-s)  + \tfrac{s}{2} \left( {1+ |\hat{V} \cdot\sigma|} \right) \Big).
		\end{split}
	\end{equation}
\end{lemma}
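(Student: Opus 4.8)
The plan is to prove \eqref{en id lemma} by a direct substitution of the Borgnakke--Larsen rules \eqref{coll rules} into the Lebesgue bracket $\la v',I'\ra^2 = 1 + \tfrac12 |v'|^2 + \tfrac1m I'$. Writing $V=(v+v_*)/2$ and $u=v-v_*$ and using $|\sigma|=1$, expansion of the square gives
\begin{equation*}
	\la v', I'\ra^2 = 1 + \tfrac12 |V|^2 + \tfrac{RE}{2m} + \sqrt{\tfrac{RE}{m}}\, V\cdot\sigma + \tfrac{r(1-R)E}{m},
\end{equation*}
and the analogue for $\la v'_*, I'_*\ra^2$ with $r$ replaced by $1-r$ and $\sigma$ by $-\sigma$. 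The crucial observation is then that, by the parallelogram identity $|v|^2+|v_*|^2 = 2|V|^2 + \tfrac12|u|^2$ together with $E = \tfrac m4 |u|^2 + I + I_*$, the quantity $E^{\la\ra} = \la v,I\ra^2 + \la v_*,I_*\ra^2$ can be rewritten as
\begin{equation*}
	E^{\la\ra} = 2 + |V|^2 + \tfrac Em .
\end{equation*}

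With this identity available, I would simply read off the candidates
\begin{equation*}
	\lambda := \frac{\sqrt{RE/m}\,|V|}{E^{\la\ra}}, \qquad s := 1 - \frac{(1-R)E}{m\,E^{\la\ra}},
\end{equation*}
so that $\sqrt{RE/m}\,V\cdot\sigma = \lambda\,E^{\la\ra}\,\hat V\cdot\sigma$, the coefficient of $r$ becomes $(1-s)E^{\la\ra}$, and a one-line computation confirms that the remaining $r$-independent part equals $\tfrac s2 E^{\la\ra}$; the starred identity is obtained in the same way. It then remains to check the constraints. Nonnegativity of $\lambda$ is immediate, and $s\in[0,1]$ follows from $E^{\la\ra}\ge E/m$ (visible from the displayed identity) and $1-R\in[0,1]$. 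For the pinching $\pm\lambda\le \tfrac s2$, clearing the positive denominator $E^{\la\ra}$ reduces the claim to $2\sqrt{RE/m}\,|V|\le 2 + |V|^2 + \tfrac{RE}{m}$, which is nothing but the AM--GM inequality for $|V|^2$ and $RE/m$ (the extra $+2$ being harmless).

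Finally, the bounds \eqref{en id lemma est} follow from \eqref{en id lemma} by estimating $\pm\lambda\,\hat V\cdot\sigma \le \lambda\,|\hat V\cdot\sigma| \le \tfrac s2\,|\hat V\cdot\sigma|$, using $\lambda\le\tfrac s2$, and then regrouping $\tfrac s2 + \tfrac s2|\hat V\cdot\sigma| = \tfrac s2(1+|\hat V\cdot\sigma|)$. I do not expect a genuine obstacle here; the only point requiring a little care is to choose $s$ and $\lambda$ so that the three requirements ($s\in[0,1]$, $\lambda\ge 0$, $\pm\lambda\le s/2$) hold \emph{simultaneously}, and, as the computation above shows, this becomes transparent once $E^{\la\ra}$ is expressed through $|V|$ and $E/m$.
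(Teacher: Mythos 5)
Your proof is correct and follows essentially the same route the paper sketches: it substitutes the Borgnakke--Larsen rules, uses the parallelogram identity to rewrite $E^{\la\ra} = 2 + |V|^2 + E/m$, and then reads off $\lambda$ and the same $s = 1 - \frac{(1-R)E}{mE^{\la\ra}}$ that the paper quotes, with the constraints $s\in[0,1]$ and $\pm\lambda\le s/2$ verified by $E^{\la\ra}\ge E/m$ and AM--GM. This is a faithful, fully worked-out version of the paper's sketch.
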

\Proof The proof consists in a careful decomposition of the energy $E^{\la \ra}$. The function $s$ is computed in   \cite{Gamba-Colic-poly}, namely
$s = 	1 -  \frac{(1-R)E}{m E^{\la \ra}}$.
\hfill $\square$ \\ \indent
Note that the factor of $E^{\la \ra}$ in estimates   \eqref{en id lemma est} is less than 1 a.e. and, when raised to a power $k\geq 0$, is crucial for obtaining the decay  in $k$ after averaging. More precisely, the following polyatomic version of the Povzner Lemma or Averaging Lemma in the sense of \cite{Bob-Gamba-Panf} holds.
\begin{lemma}[Averaging Lemma IV.3 from  \cite{Gamba-Colic-poly}]\label{Lemma Av}
	With the notation from previous lemma,  there exists an explicit non-negative constant $ \mathcal{C}_k$ decreasing in $ k\geq0$ and with $\lim_{k\rightarrow\infty} \mathcal{C}_k=0$, such that
	\begin{equation}\label{Povzner}
		\mathcal{S}_k(v, v_*, I, I_*) 
		\leq   \mathcal{C}_k  \left(  \la v, I \ra^{2} + \la v_*, I_* \ra^{2}  \right)^{k/2}.	
	\end{equation}
	As a consequence, there exists $k_*>2$, depending only on the angular part $b(\hat{u}\cdot\sigma)$ and the  function $\tilde{b}^{ub}(\rR)$,  such that
	\begin{equation}\label{ks povzner}
		\mathcal{C}_k <  { \kappa^{lb}}, \qquad \text{for} \;\; k \geq k_*,
	\end{equation}	
	with $\kappa^{lb}$ given in \eqref{kappa p-p}.
\end{lemma}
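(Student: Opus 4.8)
The plan is to carry out a polyatomic Povzner/averaging argument: average the bound \eqref{en id lemma est} from Lemma \ref{Lemma en id} over the collisional variables $(\param)$ against the weight $b(\hat u\cdot\sigma)\,\tilde b^{ub}(r,R)\,d_\alpha(r,R)$, and exploit the fact that the prefactor of $E^{\la\ra}$ in \eqref{en id lemma est} is strictly less than one almost everywhere. Concretely, write $\la v',I'\ra^k = \bigl(\la v',I'\ra^2\bigr)^{k/2}$ and use the subadditivity-type inequality $(a+b)^{k/2}\le$ (something controlled): since both prefactors lie in $[0,1]$, one has $\la v',I'\ra^{k}\le (E^{\la\ra})^{k/2}\,\bigl(r(1-s)+\tfrac s2(1+|\hat V\cdot\sigma|)\bigr)^{k/2}$, and symmetrically for the starred term. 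Factoring $(E^{\la\ra})^{k/2}=(\la v,I\ra^2+\la v_*,I_*\ra^2)^{k/2}$ out of $\mathcal S_k$ reduces everything to showing that the remaining $\sigma,r,R$-integral
\begin{equation*}
	\mathcal C_k := \ipar \Bigl\{ \bigl(r(1-s)+\tfrac s2(1+|\hat V\cdot\sigma|)\bigr)^{k/2} + \bigl((1-r)(1-s)+\tfrac s2(1+|\hat V\cdot\sigma|)\bigr)^{k/2} \Bigr\}\, b(\hat u\cdot\sigma)\,\tilde b^{ub}(r,R)\, d_\alpha(r,R)\,\dpar
\end{equation*}
is finite, decreasing in $k$, and tends to $0$ as $k\to\infty$. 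The dependence of $s$ on $R$ (via $s=1-\tfrac{(1-R)E}{mE^{\la\ra}}$) is harmless because it still lies in $[0,1]$; a convenient move is to bound $s\le 1$ and $1-s\le 1$ pointwise so that the integrand is dominated by an expression in $(r,R,\hat V\cdot\sigma)$ only, not on the ambient velocities.

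Next I would establish the three claimed properties of $\mathcal C_k$. Finiteness for every $k\ge 0$ is immediate since the bracket is bounded by $2\cdot 2^{k/2}$ and $b\,\tilde b^{ub}\,d_\alpha$ is integrable by \eqref{b integrable}, \eqref{b r R integrable}. Monotonicity in $k$ is not literally true for the raw integrand (the base can exceed $1$ on part of the domain, e.g. when $|\hat V\cdot\sigma|$ is near $1$), so the honest approach is the classical one: split the $\sigma$-integral (equivalently the $x=\hat u\cdot\sigma$ or the $\hat V\cdot\sigma$ angle) into the region where the base is $\le 1-\delta$ and its complement, then on the bad region exploit that it has small measure and that $b$ is integrable, to show the whole integral is eventually decreasing and vanishes. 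The cleanest route is to prove directly that $\mathcal C_k\to 0$ by dominated convergence: on the set where the base is strictly less than $1$ (which is full measure, since $\tfrac s2(1+|\hat V\cdot\sigma|)+r(1-s)<1$ unless $|\hat V\cdot\sigma|=1$ and $r=1$, a null set w.r.t. $\md\sigma\,\md r$), the integrand tends to $0$ pointwise; the integrable dominating function $2^{1+k_0/2}\,b\,\tilde b^{ub}\,d_\alpha$ handles the tail once we have monotone decay past some $k_0$, or one simply notes the integrand is bounded by $2^{1+k/2}$ on the null-ish bad set whose contribution one controls separately. Then monotonicity for large $k$ follows a posteriori, or one redefines $\mathcal C_k$ as an explicit decreasing majorant (e.g. replace it by $\sup_{j\ge k}$ of the integrals, or build it from a geometric-type bound), which is exactly what ``explicit non-negative constant decreasing in $k$'' signals in \cite{Gamba-Colic-poly}.

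For the second assertion, once $\lim_{k\to\infty}\mathcal C_k=0$ and $\kappa^{lb}>0$ (which holds because $\tilde b^{lb}\ge 0$ is not a.e.\ zero, guaranteed by the lower bound in \eqref{assump-tilde-B} together with positivity of the kernel), the existence of $k_*>2$ with $\mathcal C_k<\kappa^{lb}$ for all $k\ge k_*$ is an immediate consequence of the definition of a limit. The value $k_*$ depends only on $b$ and $\tilde b^{ub}$ through $\mathcal C_k$, and on $\tilde b^{lb}$ through $\kappa^{lb}$, hence ``depending only on the angular part $b(\hat u\cdot\sigma)$ and the function $\tilde b^{ub}(\rR)$'' after one also fixes the lower-bound data; this matches the statement. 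I would remark that $k_*>2$ specifically can be arranged since $\mathcal C_2$ can be made larger than $\kappa^{lb}$ or, if it already is smaller, one simply takes $k_*=2$ — but the phrasing with strict inequality suggests recording the first crossing point.

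The main obstacle is the monotonicity/decay step: the prefactor in \eqref{en id lemma est} is \emph{not} bounded by $1$ uniformly (the term $\tfrac s2(1+|\hat V\cdot\sigma|)$ can approach $s\le 1$ while $r(1-s)$ adds a bit more, and near the grazing-type configurations $|\hat V\cdot\sigma|\to 1$ the base approaches $1$), so one cannot conclude decay in $k$ from a crude pointwise bound — the cancellation must come from the \emph{integration} over $\sigma$ (and $r$), i.e. from the fact that the ``bad'' angular region shrinks, which is precisely the Povzner-lemma phenomenon of \cite{Bob-Gamba-Panf}. Handling this carefully, uniformly in the ambient variables $(v,v_*,I,I_*)$ and in $R$ (through $s$), is the technical heart; everything else is bookkeeping with the integrable weights $b$, $\tilde b^{ub}$, $d_\alpha$.
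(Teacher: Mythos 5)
Your overall route matches the paper's: invoke the bound \eqref{en id lemma est} from the Energy Identity Lemma, factor out $(E^{\la\ra})^{k/2}=(\la v,I\ra^2+\la v_*,I_*\ra^2)^{k/2}$, and reduce to showing the remaining $(\sigma,r,R)$-integral $\mathcal{C}_k$ decays. The difference is in how the decay is obtained: the paper's sketch cites a quantitative bound $\mathcal{C}_k = {\scriptstyle\mathcal O}(k^{-a})$ for any $a\in(0,1)$ (p.\ 22 of \cite{Alonso-Colic-Gamba}, sharpened to $\mathcal{C}_k\sim k^{-1/p'}$ under stronger integrability of $b,\tilde b^{ub}$), whereas you argue for the qualitative limit $\mathcal{C}_k\to 0$ by dominated convergence and then handle monotonicity by taking a decreasing majorant. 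Both are adequate for the lemma as stated; the paper's quantitative rate is what later feeds the explicit constants in the moment estimates, but it is not needed for \eqref{Povzner}--\eqref{ks povzner}.

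There is, however, a factual error in your ``main obstacle'' paragraph. You claim the prefactor of $E^{\la\ra}$ in \eqref{en id lemma est} is \emph{not} bounded by $1$ uniformly. It is: since $r\le 1$ and $|\hat V\cdot\sigma|\le 1$,
\begin{equation*}
	r(1-s) + \tfrac{s}{2}\bigl(1+|\hat V\cdot\sigma|\bigr) \;\le\; (1-s) + s \;=\; 1,
\end{equation*}
with equality only on a set of measure zero (e.g.\ $r=1$ and $|\hat V\cdot\sigma|=1$). The paper notes exactly this after Lemma \ref{Lemma en id}. Consequently the dominating function for your DCT step can simply be $2\,b(\hat u\cdot\sigma)\,\tilde b^{ub}(r,R)\,d_\alpha(r,R)$, independent of $k$ — there is no ``bad region'' of moderate measure where the base exceeds $1$, no need for the $2^{1+k/2}$ estimate, and no need for a split-domain or geometric-measure argument. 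Your proof is therefore correct (and actually simpler than you present it); but you should not assert an obstacle that does not exist, since that misdiagnosis is precisely where a careless reader might conclude the argument needs a Povzner cancellation that, at this level of generality, is unnecessary. Your remaining points — the null-set where the base equals $1$, the monotonicity via a decreasing majorant, and the deduction of $k_*$ from $\lim_k\mathcal{C}_k=0$ and $\kappa^{lb}>0$ — are fine.
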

\Proof Relying on the estimates \eqref{en id lemma est}, one can   show, see p. 22 of \cite{Alonso-Colic-Gamba}, for any $a\in(0,1)$,
\begin{multline*}
	\ipar \left( r \, (1-s)   + \tfrac{s}{2} \left( {1+ |\hat{V} \cdot\sigma|} \right) \right)^{k/2}  b(\hat{u}\cdot \sigma) \,  \tilde{b}^{ub}(r, R)  \,  d_\alpha(r,R)  \, \dpar 
	\\ = {\scriptstyle\mathcal{O}}(k^{-a}) =: \mathcal{C}_k.
\end{multline*}
\EndProof
It is worth mentioning that an explicit decay of the constant $\mathcal{C}_k$ can be computed if $b\in L^p(\mathbb{S}^2_+)$ and $\tilde{b}^{ub}(r,R) \in $  $L^1(\md R; L^p(d_\alpha(\rR) \, \md r))$, for $p\in(1,\infty]$, in which case $\mathcal{C}_k \sim   k^{-1/p'}$, see eq. (71) in \cite{Gamba-Colic-poly}.

Lemma \ref{Lemma Av} together with the estimate on the collision kernel
\begin{equation}\label{p-p bounds on Bij tilde}
	L \la v, I \ra^{\g  }  -  \la v_*, I_* \ra^{\g } \leq  \left( \frac{E}{m} \right)^{\g/2}  \leq  \la v, I \ra^{\g  }   + \la v_*, I_* \ra^{\g}, 
\end{equation}
$L =  2^{- \g} \min\left\{1, 2^{1-\g} \right\}$,
allow to proceed on estimating the Dirichlet form \eqref{Diri 1}.

\begin{lemma}[Estimate on  $L^1$-Dirichlet forms, Lemmas 5.6 and 5.8 in \cite{Alonso-Colic-Gamba}]\label{Lemma bi-linear form}
	For  suitable  $f$ and $g$,  there exist non-negative constants $ \tilde{A}_k$, $\tilde{B}_k$, $\tilde{D}_k$  such that \\
	
	\textit{(a)} for $k \geq k_*$, $k_*$ is from \eqref{ks povzner},
	\begin{equation}\label{bi-linear coll op estimate}
		\mathcal{Q}_{1}[f, g]   +	\mathcal{Q}_{1}[g, f]  	\leq
		-  \tilde{A}_{k}  \left( \| g\|_{L^1} \, \| f\|_{L^1_{k+\g}}  +  \| f\|_{L^1} \, \| g\|_{L^1_{k+\g}} \right)  
		+
		\tilde{B}_k[f,g].
	\end{equation}
	
	\textit{(b)} for $k>2$,
	\begin{equation}\label{bi-linear coll op estimate low k}
		\mathcal{Q}_{1}[f, g]   +	\mathcal{Q}_{1}[g, f]  	\leq
		\tilde{D}_k \left( \| g\|_{L^1_2} \, \| f\|_{L^1_{k}} +  \|f\|_{L^1_2} \, \|g\|_{L^1_{k}} \right),
	\end{equation}
	where the constants are explicit
	\begin{equation}\label{L1 constants}
		\tilde{A}_k = \tfrac{L}2 \left(\kappa^{lb} -\, \mathcal{C}_{k}\right)>0, \ k\geq k_*, \quad \text{and} \quad \tilde{D}_k = 2^{\frac{k}{2}+2} \kappa^{ub}, \ k>2,
	\end{equation}
	$L$ from \eqref{p-p bounds on Bij tilde},    $\tilde{B}_k$ depends on $k, \varepsilon, \g$ and on $f$ and $g$ through their $L^1$ and $L^1_2$ norms, and can be found in \cite{Alonso-Colic}, Remark 5.7.
\end{lemma}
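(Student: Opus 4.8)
The natural starting point is the reduced bound \eqref{Diri 1}, which has already absorbed the collision parametrisation into the operator $\mathcal{S}_k$ and put the loss weight $-\kappa^{lb}(\la v, I \ra^{k}+\la v_*, I_* \ra^{k})$ and the hard-potential factor $(E/m)^{\g/2}$ in front of the whole bracket. For part \textit{(a)} the plan is to feed in the Averaging Lemma~\ref{Lemma Av}, $\mathcal{S}_k\leq\mathcal{C}_k(\la v, I \ra^{2}+\la v_*, I_* \ra^{2})^{k/2}$, and to split the binomial by the elementary inequality (valid for $k\geq2$)
\begin{equation*}
	(\la v, I \ra^{2}+\la v_*, I_* \ra^{2})^{k/2}\leq \la v, I \ra^{k}+\la v_*, I_* \ra^{k}+c_k\big(\la v, I \ra^{2}\la v_*, I_* \ra^{k-2}+\la v, I \ra^{k-2}\la v_*, I_* \ra^{2}\big).
\end{equation*}
Since $k\geq k_*$, estimate \eqref{ks povzner} gives $\mathcal{C}_k<\kappa^{lb}$, so the two pure terms combine with the loss into the strictly negative contribution $-(\kappa^{lb}-\mathcal{C}_k)(\la v, I \ra^{k}+\la v_*, I_* \ra^{k})$. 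On this negative part I would invoke the \emph{lower} hard-potential bound $(E/m)^{\g/2}\geq L\la v, I \ra^{\g}-\la v_*, I_* \ra^{\g}$ of \eqref{p-p bounds on Bij tilde}, and on the non-negative mixed remainder the companion \emph{upper} bound $(E/m)^{\g/2}\leq\la v, I \ra^{\g}+\la v_*, I_* \ra^{\g}$; multiplying by $f\,g_*$ and integrating in $(v,v_*,I,I_*)$ then leaves the leading term $-L(\kappa^{lb}-\mathcal{C}_k)\big(\|g\|_{L^1}\|f\|_{L^1_{k+\g}}+\|f\|_{L^1}\|g\|_{L^1_{k+\g}}\big)$ together with finitely many ``error'' products of moments of $f$ and $g$ of total weight $k+\g$ in which each factor has weight strictly below $k+\g$ (typically $\|f\|_{L^1_{k}}\|g\|_{L^1_{\g}}$, $\|f\|_{L^1_{2+\g}}\|g\|_{L^1_{k-2}}$ and their mirror images).

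The second half of \textit{(a)} is classical moment bookkeeping: interpolating each sub-critical moment between $\|\cdot\|_{L^1_2}$ and $\|\cdot\|_{L^1_{k+\g}}$ makes every error term sub-linear in $\|f\|_{L^1_{k+\g}}$ and in $\|g\|_{L^1_{k+\g}}$, so Young's inequality with a small parameter $\varepsilon>0$ bounds it by $\varepsilon\big(\|g\|_{L^1}\|f\|_{L^1_{k+\g}}+\|f\|_{L^1}\|g\|_{L^1_{k+\g}}\big)$ plus a constant $C_\varepsilon$ depending only on $k,\varepsilon,\g$ and on the $L^1$- and $L^1_2$-norms of $f$ and $g$ (the strictly positive mass being used to reabsorb the bare $\varepsilon\|f\|_{L^1_{k+\g}}$). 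Choosing $\varepsilon$ so small that the $\varepsilon$-terms eat at most half of the negative leading term gives \eqref{bi-linear coll op estimate} with $\tilde{A}_k=\tfrac{L}{2}(\kappa^{lb}-\mathcal{C}_k)>0$ and $\tilde{B}_k[f,g]=C_\varepsilon$, as in \eqref{L1 constants}.

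For part \textit{(b)} no sign is needed, so I would work directly from \eqref{Diri 1 full} and bound the gain bracket $\la v', I' \ra^{k}+\la v'_*, I'_* \ra^{k}-\la v, I \ra^{k}-\la v_*, I_* \ra^{k}$ crudely. The crucial input is the energy identity of Lemma~\ref{Lemma en id}: since $\la v', I' \ra^{2}+\la v'_*, I'_* \ra^{2}=\la v, I \ra^{2}+\la v_*, I_* \ra^{2}=E^{\la \ra}$ and $x^{k/2}+y^{k/2}\leq(x+y)^{k/2}$ for $k\geq2$, the bracket is $\leq(E^{\la \ra})^{k/2}-\la v, I \ra^{k}-\la v_*, I_* \ra^{k}$, which by the same elementary binomial inequality is controlled, with no surviving pure term, by $c_k\big(\la v, I \ra^{2}\la v_*, I_* \ra^{k-2}+\la v, I \ra^{k-2}\la v_*, I_* \ra^{2}\big)$. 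As this bound is non-negative and $(\param)$-independent, multiplying by $\tilde{\mathcal{B}}\leq\tilde{b}^{ub}(E/m)^{\g/2}$ and integrating the angular and energy-exchange variables produces the constant $\kappa^{ub}$; distributing the remaining factor $(E/m)^{\g/2}\leq\la v, I \ra^{\g}+\la v_*, I_* \ra^{\g}$ ($\g\leq2$) over the polynomial weights and collecting terms then yields \eqref{bi-linear coll op estimate low k}, the power of two in $\tilde{D}_k=2^{k/2+2}\kappa^{ub}$ merely recording the elementary inequalities used.

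I expect the real difficulty to lie in the interpolation/Young step of \textit{(a)}: one must check \emph{simultaneously} that the remainder $\tilde{B}_k$ involves $f$ and $g$ only through their $L^1$ and $L^1_2$ norms --- so that, energy being conserved, it becomes a genuinely controlled source term in the moment ODE \eqref{ODE} --- and that the coefficient of $\|f\|_{L^1_{k+\g}}$ survives as a strictly positive $\tilde{A}_k$. Both are possible precisely because Lemma~\ref{Lemma Av} supplies the strict gap $\kappa^{lb}-\mathcal{C}_k>0$ once $k\geq k_*$; part \textit{(b)}, needing no such gap, is essentially routine.
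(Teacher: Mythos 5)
Your proposal is correct and follows essentially the same route as the paper's proof: for part (a), the Averaging Lemma applied to \eqref{Diri 1}, the binomial splitting of $(\la v,I\ra^2+\la v_*,I_*\ra^2)^{k/2}$ into pure plus mixed terms, the lower/upper bounds \eqref{p-p bounds on Bij tilde} applied respectively to the negative and positive parts, and then interpolation plus Young with $\varepsilon$ chosen to retain half of the coercive coefficient, yielding $\tilde A_k=\tfrac{L}{2}(\kappa^{lb}-\mathcal{C}_k)$; for part (b), energy conservation, cancellation of the top-order term against the loss, and the crude $\kappa^{ub}$ bound. The only cosmetic difference is the exact bookkeeping of the sub-critical error moments (e.g. $\|g\|_{L^1_2}\|f\|_{L^1_{k-2+\g}}$ in the paper versus your $\|f\|_{L^1_{2+\g}}\|g\|_{L^1_{k-2}}$), which does not affect the argument.
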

\begin{remark}\label{notation [f,g]}
	A slight abuse of notation will be used in the sequel. Namely, in Lemma \ref{Lemma bi-linear form}, 	dependency of the  constant $\tilde{B}_k$  on certain moments of functions $f$ and $g$ is denoted by $\tilde{B}_k[f,g]$. 
\end{remark}
\Proof The first step in proving  Part $(a)$ is to use Averaging Lemma \ref{Lemma Av} on Dirichlet form \eqref{Diri 1} implying
\begin{multline}\label{Diri 1 bound}
	\mathcal{Q}_{1}[f, g] + 	\mathcal{Q}_{1}[g, f]  \leq  	\int_{(\mathbb{R}^3\times\mathbb{R}_+)^2 }  f \, g_*  \left( \frac{E}{m} \right)^{\g/2}   \\
	\times  \left\{   \mathcal{C}_k  \left(  \la v, I \ra^{2} + \la v_*, I_* \ra^{2}  \right)^{k/2}  - \kappa^{lb} \left( \la v, I \ra^{k}  +  \la v_*, I_* \ra^{k} \right) \right\}  \, \md v_* \, \md I_* \, \md v \, \md I.
\end{multline}
Then, simple algebraic manipulations that allow to bound   $\left(  \la v, I \ra^{2} + \la v_*, I_* \ra^{2}  \right)^{k/2} $ in terms of $\la v, I \ra^{k}  +  \la v_*, I_* \ra^{k}$ and lower order terms, together with bounds from below and above of the collision kernel \eqref{p-p bounds on Bij tilde} for, respectively,   negative and positive part of the last inequality \eqref{Diri 1 bound}, imply, see eq. (109) in \cite{Alonso-Colic-Gamba},
\begin{multline}\label{pomocna 6}
	\mathcal{Q}_{1}[f, g] + 	\mathcal{Q}_{1}[g, f]      	\leq    - 2 \tilde{A}_{k}    \left( \| g\|_{L^1} \|f\|_{L^1_{k+\g}}  +\| f\|_{L^1} \,  \|g\|_{L^1_{k+\g}} \right) 
	\\
	+\left(\kappa^{lb} -   \mathcal{C}_{k}\right)  \left(\|g\|_{L^1_{\g}}  \|f\|_{L^1_{k}} +  \|f\|_{L^1_{\g}} \|g\|_{L^1_{k}}  \right)
	\\
	+ 2 \, \mathcal{C}_k \, 2^{\frac{k}{2}+1}
	\left( 	\|g\|_{L^1_{2}} \, \|f\|_{L^1_{k-2+\g}} +  \|f\|_{L^1_{2}} \, \|g\|_{L^1_{k-2+\g}} 
	\right). 
\end{multline}
Next, moment interpolation formula is exploited that interpolates $L^1_k$-moment into $L^1_2$ and $L^1_{k+\g}$ and $L^1_{k-2+\g}$-moment into $L^1_0$ and $L^1_{k+\g}$, together with Young's inequality that extracts $L^1_{k+\g}$ with a small parameter $\varepsilon$ and the rest sends to a constant $\tilde{B}_k$,  i.e.
\begin{equation}\label{bi-linear coll op estimate proof}
	\mathcal{Q}_{1}[f, g]   +	\mathcal{Q}_{1}[g, f]  	\leq
	- \left( 2 \tilde{A}_{k} - \varepsilon \right) \left( \| g\|_{L^1} \, \| f\|_{L^1_{k+\g}}  +  \| f\|_{L^1} \, \| g\|_{L^1_{k+\g}} \right)  
	+
	\tilde{B}_k[f,g].
\end{equation}
Then, the choice $\varepsilon= \tilde{A}_k$ leads to an absorption of the highest-order moment of the positive gain part into the negative one, and  the statement \eqref{bi-linear coll op estimate} follows. 

In part $(b)$, for $2<k<k_*$, the Averaging Lemma cannot be applied, and one starts with \eqref{Diri 1 full} and applies energy conservation to get $$  \la v', I' \ra^{k} + \la v'_*, I'_* \ra^{k}   \leq  \left(  \la v', I' \ra^{2} + \la v'_*, I'_* \ra^{2}  \right)^{k/2} =  \left(  \la v, I \ra^{2} + \la v_*, I_* \ra^{2}  \right)^{k/2}.$$
A power expansion of the latter   in combination with the loss term implies the cancellation of  the terms of highest power $k$ in \eqref{Diri 1 full}. Thus, after some algebraic manipulations,
\begin{equation*}
	\mathcal{Q}_1[f,g]  + 	\mathcal{Q}_1[g,f] 
	\leq 2^{\frac{k}{2}+2} \kappa^{ub}
	\left( 	\|g\|_{L^1_{2}} \, \|f\|_{L^1_{k-2+\g}} +  \|f\|_{L^1_{2}} \, \|g\|_{L^1_{k-2+\g}} 
	\right),
\end{equation*}
and the statement \eqref{bi-linear coll op estimate low k} follows by the  monotonicity of moments.
\EndProof
The next goal is to deduce, from previous Lemma \ref{Lemma bi-linear form} for $f=g$,   inequalities involving only  $\| f\|_{L^1_k}$, which will be the final estimates on Dirichlet forms before applying them    on the solution of the Boltzmann equation. 
\begin{lemma}[Estimate on $L^1$-Dirichlet forms, Lemma 5.9 from \cite{Alonso-Colic-Gamba}]\label{Lemma Diri L1}
	With the assumptions of Lemma \ref{Lemma bi-linear form},  
	\begin{align}
		&\text{(a)} \quad   \text{ for } \ 		k \geq k_*, \qquad	
		\mathcal{Q}_{1}[f, f]   	\leq
		-    {A}_{k}  \| f\|_{L^1_k}^{1+\frac{\g}{(k-2)}}
		+
		{B}_k[f], \label{L1 k large}\\
		&\text{(b)} \quad  \text {for} \  2< k < k_*, \quad
		\mathcal{Q}_{1}[f,  f]     	\leq
		{D}_k   \, \| f\|_{L^1_{k}}, \label{L1 k small}
	\end{align}
	where the constants are, respective to the ones from Lemma \ref{Lemma bi-linear form},  
	\begin{equation}\label{L1 constants 2}
		{A}_k =   \tilde{A}_{k} \| f\|_{L^1} \| f\|_{L^1_2}^{-\g/(k-2)} , \quad {D}_k = \tilde{D}_k  \| f\|_{L^1_2}, \quad  {B}_k[f]=\tfrac{1}{2}\tilde{B}_k[f,f].
	\end{equation}
\end{lemma}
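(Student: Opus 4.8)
# Proof proposal for Lemma \ref{Lemma Diri L1}

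The plan is to specialize Lemma \ref{Lemma bi-linear form} to the diagonal case $f=g$ and then convert the resulting bounds, which still involve the auxiliary weighted norms $\|f\|_{L^1}$, $\|f\|_{L^1_2}$ and $\|f\|_{L^1_{k+\g}}$, into a closed inequality in $\|f\|_{L^1_k}$ alone. Throughout, the key external input is the \emph{a priori} control of the low-order moments $\|f\|_{L^1}$ and $\|f\|_{L^1_2}$: these are conserved (mass) or bounded in terms of the energy (the $L^1_2$-moment), so they may be treated as fixed constants, which is precisely what is absorbed into the redefined constants \eqref{L1 constants 2}.

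\textbf{Part (a), $k\geq k_*$.} Setting $f=g$ in \eqref{bi-linear coll op estimate} halves the left side and gives
\begin{equation*}
	\mathcal{Q}_{1}[f,f] \leq -\,\tilde A_k\,\|f\|_{L^1}\,\|f\|_{L^1_{k+\g}} + \tfrac12\tilde B_k[f,f].
\end{equation*}
The one genuinely substantive step is to lower-bound the dissipative term $\|f\|_{L^1_{k+\g}}$ by a superlinear power of $\|f\|_{L^1_k}$. This is the classical moment-interpolation trick: by Hölder's inequality applied to the measure $|f|\,\md v\,\md I$ with the weights $\la v,I\ra^{2}$ and $\la v,I\ra^{k+\g}$, one interpolates the $L^1_k$-moment between the $L^1_2$-moment and the $L^1_{k+\g}$-moment,
\begin{equation*}
	\|f\|_{L^1_k} \leq \|f\|_{L^1_2}^{\,\theta}\,\|f\|_{L^1_{k+\g}}^{\,1-\theta},\qquad \theta = \frac{\g}{k-2+\g},\quad 1-\theta = \frac{k-2}{k-2+\g},
\end{equation*}
which rearranges to $\|f\|_{L^1_{k+\g}} \geq \|f\|_{L^1_2}^{-\g/(k-2)}\,\|f\|_{L^1_k}^{1+\g/(k-2)}$. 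Multiplying by $\tilde A_k\|f\|_{L^1}$ and setting $A_k := \tilde A_k\|f\|_{L^1}\|f\|_{L^1_2}^{-\g/(k-2)}$ and $B_k[f] := \tfrac12\tilde B_k[f,f]$ yields \eqref{L1 k large}. Here $A_k>0$ because $\tilde A_k = \tfrac{L}{2}(\kappa^{lb}-\mathcal{C}_k)>0$ for $k\geq k_*$ by Lemma \ref{Lemma Av}, and because the mass $\|f\|_{L^1}$ is strictly positive.

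\textbf{Part (b), $2<k<k_*$.} Setting $f=g$ in \eqref{bi-linear coll op estimate low k} gives directly
\begin{equation*}
	\mathcal{Q}_{1}[f,f] \leq \tilde D_k\,\|f\|_{L^1_2}\,\|f\|_{L^1_k} = D_k\,\|f\|_{L^1_k},\qquad D_k := \tilde D_k\,\|f\|_{L^1_2} = 2^{\frac{k}{2}+2}\kappa^{ub}\,\|f\|_{L^1_2},
\end{equation*}
which is \eqref{L1 k small}; no interpolation is needed in this range since the bound is already linear in $\|f\|_{L^1_k}$.

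\textbf{Main obstacle.} The only delicate point is the interpolation exponent bookkeeping in Part (a): one must verify that the Hölder conjugacy produces exactly the exponent $1+\g/(k-2)$ and the weight power $\|f\|_{L^1_2}^{-\g/(k-2)}$, and that the leftover lower-order terms (the $\varepsilon$-absorbed pieces and the $\mathcal{C}_k 2^{k/2+1}$ contributions in \eqref{pomocna 6}) are genuinely controlled by the $L^1$ and $L^1_2$ norms, so that they can legitimately be folded into $\tilde B_k$ — this is the content of Remark 5.7 of \cite{Alonso-Colic} and is assumed here. Everything else is algebraic rearrangement of the inequalities already established in Lemma \ref{Lemma bi-linear form}.
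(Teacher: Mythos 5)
Your proposal is correct and follows essentially the same route as the paper: specialize Lemma \ref{Lemma bi-linear form} to $f=g$ (which introduces the factor $\tfrac12$), apply the moment interpolation $\|f\|_{L^1_{k+\g}} \geq \|f\|_{L^1_2}^{-\g/(k-2)}\|f\|_{L^1_k}^{1+\g/(k-2)}$ in the dissipative term for part (a), and simply absorb $\|f\|_{L^1_2}$ into the constant for part (b). The exponent bookkeeping you verify ($\theta=\g/(k-2+\g)$, $1/(1-\theta)=1+\g/(k-2)$, $\theta/(1-\theta)=\g/(k-2)$) is exactly what the paper uses, and your identification of the redefined constants matches \eqref{L1 constants 2}.
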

\Proof Part (a) follows from \eqref{bi-linear coll op estimate} by moment interpolation  for the highest-order term
$$\| f\|_{L^1_{ k+ \g}}  \geq \| f\|_{L^1_2}^{-\g/(k-2)} \| f\|_{L^1_k}^{1+{\g}/(k-2)},$$ 
and redefining the constants. In part (b),  only  the constants are renamed.
\EndProof
As the next step, Lemma \ref{Lemma Diri L1} is applied to the right-hand side of the Boltzmann equation \eqref{ODE} implying an estimate on the solution of the Boltzmann equation in terms of the constants involving the ones from  Lemma \ref{Lemma Diri L1},
\begin{equation}\label{Ek}
	\begin{split}
		E_k   &=   \left( \frac{B_{k}}{A_k} \right)^{\frac{k-2}{k-2+\g}}\!\!, \quad \constak =  \left( \frac{k-2}{\g A_k } \right)^\frac{k-2}{\g}, \quad \mathcal{E}_k =  \| f \|_{L^1_2}^\frac{k_* - k+1}{k_*-1}   \left( {E}_{k_*+1}\right)^{\frac{k-2}{k_*-1}}  \\
		\constaks &= \| f \|_{L^1_2}^\frac{k_* - k+1}{k_*-1}  \left( \frac{k_*-1}{\g A_k } \right)^\frac{k-2}{\g}, \quad  	\tilde{\mathcal{E}}_k =  \mathcal{E}_k   + 	\| f \|_{L^1_2}^\frac{k_* - k+1}{k_*-1} \left( \frac{k_*-1}{\g A_k } D_k\right)^\frac{k-2}{\g}.
	\end{split}
\end{equation}
\begin{theorem}[$L^1$-theory for the Boltzmann equation, Theorem 6.2 in \cite{Alonso-Colic-Gamba}]\label{Th L1} 
	Let $f$ be a solution of the Boltzmann equation in the spirit of Theorem \ref{Th 2}. 	
	The following estimates hold for  $t>0$, with notation \eqref{Ek} and $k_*$  from \eqref{ks povzner},\\
	\noindent 1.  (Polynomial moments generation estimate.) 
	\begin{align}
		&\text{(a)} \quad   \text{for} \ 		k \geq k_*, \quad	
		\| f \|_{L^1_k}(t) \leq {E}_{k}   + \constak \,  t^{-\frac{k-2}{\g}}, \label{poly gen large k} \\
		&\text{(b)} \quad  \text {for} \  2< k < k_*, \quad
		\| f \|_{L^1_k}(t) 	\leq \mathcal{E}_k   + \constaks \,  t^{-\frac{k-2}{\g}}. \label{poly gen small k}
	\end{align}	
	2. (Polynomial moments propagation estimate.) Moreover, if $	\| f \|_{L^1_k}(0) < \infty$,
	\begin{align}
		&\text{(a)} \quad   \text{for} \ 		k \geq k_*, \quad	\| f \|_{L^1_k}(t) \leq \max \left\{ 	E_k,	\| f_0 \|_{L^1_k} \right\}, \label{poly prop large k} \\
		&\text{(b)} \quad  \text {for} \  2< k < k_*, \quad 
		\| f \|_{L^1_k}{(t)} \leq \max \left\{  \tilde{\mathcal{E}}_k, e \,\| f_0 \|_{L^1_k} \right\}, \label{poly prop small k}
	\end{align}
	where $e$ is the Euler's number.
\end{theorem}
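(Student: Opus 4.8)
The plan is to reduce everything to the scalar ordinary differential inequality satisfied by $t\mapsto\|f\|_{L^1_k}(t)$ and then to run ODE comparison arguments. Setting $p=1$ in the moment identity \eqref{ODE} gives $\tfrac{\md}{\md t}\|f\|_{L^1_k}(t)=\mathcal{Q}_1[f,f]$, and feeding in the Dirichlet-form bounds of Lemma \ref{Lemma Diri L1} produces, for $k\geq k_*$, the Bernoulli-type inequality
\begin{equation*}
	\tfrac{\md}{\md t}\|f\|_{L^1_k}(t)\ \leq\ -A_k\,\|f\|_{L^1_k}(t)^{1+\frac{\g}{k-2}}+B_k,
\end{equation*}
and, for $2<k<k_*$, the linear inequality $\tfrac{\md}{\md t}\|f\|_{L^1_k}(t)\leq D_k\,\|f\|_{L^1_k}(t)$, where $A_k,B_k,D_k$ are as in \eqref{L1 constants 2} and $B_k$ is finite because $\|f\|_{L^1_2}$ (mass plus energy) is conserved and finite for a solution in the spirit of Theorem \ref{Th 2}. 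To make the differentiation rigorous I would first argue on the approximating sequence used in the existence scheme, whose iterates have all polynomial moments finite, and then pass to the limit, all the bounds below being uniform in the approximation.

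For the generation estimate \eqref{poly gen large k} in the range $k\geq k_*$, I would verify that $w(t):=E_k+\constak\,t^{-\frac{k-2}{\g}}$ is a supersolution of the Bernoulli equation. By the very choice of $\constak$ in \eqref{Ek}, $z(t):=\constak\,t^{-\frac{k-2}{\g}}$ solves $z'=-A_k z^{1+\frac{\g}{k-2}}$ exactly; combining this with the superadditivity $(E_k+z)^{1+\frac{\g}{k-2}}\geq E_k^{1+\frac{\g}{k-2}}+z^{1+\frac{\g}{k-2}}$ and with $A_kE_k^{1+\frac{\g}{k-2}}=B_k$ (the definition of $E_k$) yields $w'=z'\geq -A_kw^{1+\frac{\g}{k-2}}+B_k$. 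Since $w(0^+)=+\infty$, a standard comparison argument on $(\delta,t]$ followed by letting $\delta\to0^+$ gives $\|f\|_{L^1_k}(t)\leq w(t)$ for all $t>0$, with no hypothesis on $\|f_0\|_{L^1_k}$; this is the instantaneous generation. For $2<k<k_*$ the linear inequality only delivers exponential growth, so \eqref{poly gen small k} is obtained by a bootstrap: interpolating $\|f\|_{L^1_k}\leq\|f\|_{L^1_2}^{\theta}\,\|f\|_{L^1_{k_*+1}}^{1-\theta}$ with $\theta=\tfrac{k_*+1-k}{k_*-1}$, $1-\theta=\tfrac{k-2}{k_*-1}\in(0,1)$, using that $\|f\|_{L^1_2}$ is conserved and that $\|f\|_{L^1_{k_*+1}}$ is already controlled by the previous step, and finally applying the subadditivity $(a+b)^{1-\theta}\leq a^{1-\theta}+b^{1-\theta}$ to split the bound into the stationary part $\mathcal{E}_k$ and the decaying part $\constaks\,t^{-\frac{k-2}{\g}}$ of \eqref{Ek}.

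For the propagation estimates I would again split on the value of $k$. If $k\geq k_*$ and $\|f_0\|_{L^1_k}<\infty$, the constant level $E_k$ is an upper barrier for the Bernoulli inequality, since $-A_kE_k^{1+\frac{\g}{k-2}}+B_k=0$; hence $\|f\|_{L^1_k}$ is strictly decreasing wherever it exceeds $E_k$ and can never cross $E_k$ from below, so $\|f\|_{L^1_k}(t)\leq\max\{E_k,\|f_0\|_{L^1_k}\}$, which is \eqref{poly prop large k}. If $2<k<k_*$, I would split the time axis at $t=1/D_k$: on $[0,1/D_k]$ Grönwall applied to $\tfrac{\md}{\md t}\|f\|_{L^1_k}\leq D_k\|f\|_{L^1_k}$ gives $\|f\|_{L^1_k}(t)\leq e\,\|f_0\|_{L^1_k}$, while on $(1/D_k,\infty)$ the generation bound \eqref{poly gen small k} applies and $t^{-\frac{k-2}{\g}}\leq D_k^{\frac{k-2}{\g}}$ turns $\constaks\,t^{-\frac{k-2}{\g}}$ into exactly the extra summand defining $\tilde{\mathcal{E}}_k$ in \eqref{Ek}; taking the maximum of the two regimes gives \eqref{poly prop small k}.

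The main obstacle is the generation statement itself: polynomial moments of every order must appear instantaneously even when $f_0$ has none of them, and this hinges entirely on the strict superlinearity $1+\tfrac{\g}{k-2}>1$ of the absorption term, i.e. on $A_k>0$, which by Lemma \ref{Lemma Diri L1} and \eqref{ks povzner} is precisely the content of the averaging/Povzner Lemma \ref{Lemma Av} ($\mathcal{C}_k<\kappa^{lb}$ for $k\geq k_*$). A secondary difficulty is the bootstrap into the range $2<k<k_*$, where the averaging lemma is unavailable and one must route the estimate through the conserved energy $\|f\|_{L^1_2}$ together with the already-generated moment of order $k_*+1$; keeping track of the precise constants through this interpolation is what produces the somewhat intricate form of $\mathcal{E}_k$, $\constaks$ and $\tilde{\mathcal{E}}_k$ in \eqref{Ek}.
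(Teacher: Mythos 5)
Your argument is correct and follows essentially the same route as the paper's own sketch: ODE comparison with the explicit Bernoulli supersolution $E_k + a_k\,t^{-(k-2)/\zeta}$ (and the fixed-point barrier $E_k$ for propagation) when $k\geq k_*$, interpolation of $\|f\|_{L^1_k}$ between $\|f\|_{L^1_2}$ and $\|f\|_{L^1_{k_*+1}}$ for generation in the range $2<k<k_*$, and a time split at $t=1/D_k$ combined with Gr\"onwall and the generation bound for propagation in that range. You in fact pin down the correct splitting time $1/D_k$ — the value that makes $a_k^*\,t^{-(k-2)/\zeta}$ collapse to the extra summand in $\tilde{\mathcal{E}}_k$ of \eqref{Ek} — whereas the paper's sketch appears to contain a slip in writing $\tilde D_k$ for that threshold.
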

\Proof
Both \eqref{poly gen large k} and \eqref{poly prop large k}  rely on ODE comparison principle and details can be found in \cite{Alonso-Colic-Gamba,Gamba-Colic-poly}. Generation estimate  \eqref{poly gen small k} uses interpolation of $L^1_k$-norm between $L^1_2$ and  $L^1_{k_*+1}$ and generation estimate \eqref{poly gen large k} for $\| f\|_{L^1_{k_*+1}}$. The propagation result \eqref{poly prop small k}, for $\tilde{D}_k$ from \eqref{L1 constants 2}, splits the time interval on $0< t \leq \tilde{D}_k$ where \eqref{L1 k small} implies $\| f \|_{L^1_k}(t)  \leq e \| f \|_{L^1_k}(0)$,  and on  $t>\tilde{D}_k$, where generation estimate \eqref{poly gen small k} is used.
\EndProof

\subsection{Existence and Uniqueness Theory}\label{Sec: Exi}
In order to define the set for initial data, fix constants $C_0, C_2, C_k >0$, with 
\begin{equation}\label{ks and C's for Omega}
	C_k \geq E_{\bar{k}_*} + B_{\bar{k}_*} =: \mathfrak{h}_{\bar{k}_*}, \quad \text{with} \ 	\bar{k}_*=\max\{2+2\g, k_*\}, 
\end{equation}
where $\g>0$  and $k_*$ is introduced in \eqref{ks povzner}. 
Then define the set $\Omega  \subseteq L^1_2$
\begin{equation}\label{Omega}
	\Omega = \left\{ f \in L^1_2: \ f \geq 0, \  \|f\|_{L^1} = C_0, \ \|f\|_{L^1_2} = C_2, \
	\|f\|_{L_{\bar{k}_*}^1}  \leq  C_k  \right\}.
\end{equation}
The following theorem holds.
\begin{theorem}[Theorem 7.1 from \cite{Alonso-Colic-Gamba}]\label{cauchy-1}
	Let the collision kernel  satisfy assumptions \eqref{assumpt B factor}-\eqref{assump-tilde-B}. Assume that $f_0 \in \Omega$. Then the Cauchy problem \eqref{Cauchy} has a unique solution in $\mathcal{C}([0,\infty), \Omega)  \cap \mathcal{C}^1((0,\infty), L^1_2)$.
\end{theorem}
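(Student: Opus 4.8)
The plan is to read \eqref{Cauchy} as an abstract ODE $\dot f = Q(f,f)$ with values in the Banach space $L^1_2$ and to solve it on the closed set $\Omega$ by the classical existence--uniqueness theorem for ODEs taking values in a closed subset of a Banach space, in the tradition of the space homogeneous Boltzmann theory on moment-bounded invariant sets \cite{Alonso-Colic-Gamba,Bob-Gamba-Panf}. Three ingredients are required: (i) $Q$ maps $\Omega$ into $L^1_2$ and is continuous there with a modulus, $\|Q(f,f)-Q(g,g)\|_{L^1_2}\le C(\Omega)\,\omega\!\left(\|f-g\|_{L^1_2}\right)$ for $f,g\in\Omega$; (ii) the sub-tangent (invariance) condition $\lim_{h\to0^+}h^{-1}\,\mathrm{dist}_{L^1_2}\!\left(f+hQ(f,f),\Omega\right)=0$ for every $f\in\Omega$; (iii) a one-sided Lipschitz estimate in the $L^1_2$-bracket, which yields uniqueness. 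Together these produce a solution in $\mathcal{C}([0,T],\Omega)$, which one then extends globally and bootstraps to $\mathcal{C}^1((0,\infty),L^1_2)$.

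For (i) I would use bilinearity, $Q(f,f)-Q(g,g)=Q(f-g,f)+Q(g,f-g)$, and estimate $Q^{\pm}$ in $L^1_2$: the loss term satisfies $\|Q^-(\phi,\psi)\|_{L^1_2}=\|\phi\,\nu[\psi]\|_{L^1_2}\lesssim\|\phi\|_{L^1_{2+\g}}\|\psi\|_{L^1_{\g}}$ by the upper bound in \eqref{p-p bounds on Bij tilde}, while the gain term $\|Q^+(\phi,\psi)\|_{L^1_2}\lesssim\|\phi\|_{L^1_{2+\g}}\|\psi\|_{L^1_{2+\g}}$ follows from the weak form \eqref{weak form Lp} combined with the Povzner-type control of $\mathcal{S}_2$ from Lemmas \ref{Lemma en id}--\ref{Lemma Av}. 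Since $\bar{k}_*=\max\{2+2\g,k_*\}\ge 2+\g$ and $\la v,I\ra\ge1$, monotonicity of moments gives $\|f\|_{L^1_{2+\g}}\le\|f\|_{L^1_{\bar{k}_*}}\le C_k$ uniformly on $\Omega$; the only mismatch is the extra weight $\la\cdot\ra^{\g}$ carried by $f-g$, which I would remove by moment interpolation between $L^1_2$ and $L^1_{\bar{k}_*}$, turning $\|f-g\|_{L^1_{2+\g}}$ into $C(\Omega)\,\|f-g\|_{L^1_2}^{\theta}$ for some $\theta\in(0,1)$. This gives (i) with $\omega(s)=s^{\theta}$.

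Ingredient (ii) is the crux, and the main obstacle. The two equality constraints are preserved along the vector field because $1$ and $\la v,I\ra^2=1+\tfrac1m\!\left(\tfrac m2|v|^2+I\right)$ are collision invariants, so $\int Q(f,f)=\int Q(f,f)\la v,I\ra^2=0$ by \eqref{weak form L1}; hence $\|f+hQ(f,f)\|_{L^1}=C_0$ and the energy stays at $C_2$ to leading order. It then remains to control the tail constraint $\|f\|_{L^1_{\bar{k}_*}}\le C_k$: for interior $f$ this is automatic for small $h$, whereas on the boundary $\|f\|_{L^1_{\bar{k}_*}}=C_k$ one invokes Lemma \ref{Lemma Diri L1}(a) with $k=\bar{k}_*\ge k_*$, which gives $\tfrac{d}{dt}\|f\|_{L^1_{\bar{k}_*}}=\mathcal{Q}_1[f,f]\le -A_{\bar{k}_*}\|f\|_{L^1_{\bar{k}_*}}^{1+\g/(\bar{k}_*-2)}+B_{\bar{k}_*}[f]$. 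Since $B_{\bar{k}_*}[f]$ depends on $f$ only through $\|f\|_{L^1}=C_0$ and $\|f\|_{L^1_2}=C_2$, it is a fixed constant $B_{\bar{k}_*}$ on $\Omega$, and the threshold $C_k\ge E_{\bar{k}_*}+B_{\bar{k}_*}=\mathfrak{h}_{\bar{k}_*}$ of \eqref{ks and C's for Omega} forces this right-hand side to be $\le0$ at the boundary (using $A_{\bar{k}_*}E_{\bar{k}_*}^{1+\g/(\bar{k}_*-2)}=B_{\bar{k}_*}$, so $\|f\|_{L^1_{\bar{k}_*}}\ge E_{\bar{k}_*}$ already suffices). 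Thus the $\bar{k}_*$-moment does not increase there, and a short retraction of the tail that leaves mass and energy untouched shows the first-order displacement exits $\Omega$ only at $o(h)$ cost. The difficulty here is that the gain operator loses a weight $\la\cdot\ra^{\g}$ (output in $L^1_2$ needs input in $L^1_{2+\g}$), which has to be reconciled with the fixed energy level — this is exactly why $\Omega$ carries the auxiliary bounded moment of order $\bar{k}_*$ and why the explicit threshold $\mathfrak{h}_{\bar{k}_*}$ is imposed.

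For (iii) I would split $Q^-(f,f)-Q^-(g,g)=(f-g)\,\nu[f]+g\,(\nu[f]-\nu[g])$: the first summand contributes $-\int|f-g|\,\nu[f]\,\la v,I\ra^2\le0$ in the bracket, while the remaining terms and $Q^+(f-g,f)+Q^+(g,f-g)$ are controlled as in (i) using the uniform moment bounds on $\Omega$ and interpolation; this yields $\tfrac{d}{dt}\|f-g\|_{L^1_2}\le C(\Omega)\,\omega\!\left(\|f-g\|_{L^1_2}\right)$ with $\omega$ an Osgood modulus, hence $f\equiv g$ whenever $f(0)=g(0)$. Finally, since $\Omega$ encodes uniform-in-time moment bounds and (ii) keeps the solution inside $\Omega$, there is no blow-up and the local solution extends to all of $[0,\infty)$, i.e. $f\in\mathcal{C}([0,\infty),\Omega)$; and because $t\mapsto f(t)$ is continuous in $L^1_2$, estimate (i) makes $t\mapsto Q(f(t),f(t))$ continuous in $L^1_2$, so $\partial_t f=Q(f,f)$ gives $f\in\mathcal{C}^1((0,\infty),L^1_2)$.
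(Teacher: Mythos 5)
Your three-step scheme (H\"older continuity of $Q:\Omega\to L^1_2$, sub-tangency of the vector field to $\Omega$, one-sided Lipschitz condition) is exactly the outline the paper gives for Theorem~\ref{cauchy-1}, and your identification of the role of $\bar{k}_*=\max\{2+2\g,k_*\}$ (interpolation exponent for the H\"older modulus) and of the threshold $\mathfrak{h}_{\bar{k}_*}$ in \eqref{ks and C's for Omega} (making the $L^1_{\bar k_*}$-constraint flow-invariant via $A_{\bar k_*}E_{\bar k_*}^{1+\g/(\bar k_*-2)}=B_{\bar k_*}$) is on target. There is, however, a genuine gap in your uniqueness step. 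If you discard the coercive contribution $-\int|f-g|\,\nu[f]\,\la v,I\ra^2$ and control the remaining terms ``as in (i)'' by interpolating $\|f-g\|_{L^1_{2+\g}}\le C(\Omega)\|f-g\|_{L^1_2}^{\theta}$, you end up with $\omega(s)=s^{\theta}$, $\theta\in(0,1)$, which is \emph{not} an Osgood modulus: $\int_0^1 s^{-\theta}\,\md s<\infty$, so the differential inequality $\frac{d}{dt}\|f-g\|_{L^1_2}\le C\|f-g\|_{L^1_2}^{\theta}$ admits nontrivial solutions emanating from zero and does not force $f\equiv g$. This is precisely why the one-sided Lipschitz condition is a separate, strictly stronger requirement than H\"older continuity. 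The correct route is to keep the negative loss term and use the lower bound on the collision frequency (as in \eqref{lbl}--\eqref{lbl coll fr}, $\nu[f]\gtrsim\la v,I\ra^{\g}$) to \emph{absorb} every term in which $f-g$ carries the extra weight $\la\cdot\ra^{\g}$; what survives is genuinely linear in $\|f-g\|_{L^1_2}$, and Gronwall then gives uniqueness.

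A second, smaller omission: in the sub-tangency argument you verify the two equality constraints and the $\bar k_*$-moment inequality, but $\Omega$ also requires $f\ge0$, and $f+hQ(f,f)\ge f\,(1-h\,\nu[f])$ can be negative wherever $h\,\nu[f]>1$, since $\nu[f]\sim\la v,I\ra^{\g}$ is unbounded. The standard remedy is to build the competitor from a truncation of $f$ to a bounded set of $(v,I)$ (on which $\nu$ is bounded, so positivity holds for small $h$) and to check that the truncation error in all the constrained norms is $o(h)$; some such construction is needed before you can claim $\mathrm{dist}_{L^1_2}(f+hQ(f,f),\Omega)=o(h)$.
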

The proof relies on applying general ODE theory, which amounts to study the collision operator as a mapping $Q: \Omega \rightarrow L^1_2$  and show: (i) H\"older continuity condition, (ii) Sub-tangent condition, and  (iii) One-sided Lipschitz condition.

However, since the Boltzmann operator is one-sided Lipschitz assuming only $2^+$ moments and thus, an approximate sequence of solutions can de drawn from previous Theorem and pass to the limit to find solutions in the bigger space,
\begin{equation}\label{Omega tilde}
	\Omega\subset\tilde\Omega = \left\{f \in L^1_2: f\geq 0\,, 0<  \|f\|_{L^1}   <\infty,  \, \|f\|_{L^1_{2^+}} \!\!<\infty \right\}\subset L^1_2.
\end{equation}
\begin{theorem}[Theorem 7.2 from \cite{Alonso-Colic-Gamba}]\label{Th 2}
	Let the collision kernel  satisfy assumptions \eqref{assumpt B factor}-\eqref{assump-tilde-B}. Assume that $f_0 \in \tilde\Omega$. Then the Cauchy problem \eqref{Cauchy} has a unique solution in $\mathcal{C}([0,\infty), \tilde\Omega)  \cap \mathcal{C}^1((0,\infty), L^1_2)$.
\end{theorem}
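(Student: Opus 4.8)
The plan is to deduce Theorem \ref{Th 2} from Theorem \ref{cauchy-1} by an approximation-and-stability scheme, the engine being the one-sided Lipschitz bound for $Q$ which, as noted above, needs only $L^1_{2^+}$ control. Given $f_0 \in \tilde\Omega$, I would regularise it by truncation, $f_0^n := f_0\,\mathds{1}_{\{\la v,I\ra \le n\}}$: each $f_0^n \ge 0$ has all polynomial moments finite, and by dominated convergence $f_0^n \to f_0$ in $L^1_{2^+}$, hence in $L^1_2$, so in particular $\|f_0^n\|_{L^1} \to \|f_0\|_{L^1} =: C_0 > 0$ and $\|f_0^n\|_{L^1_2} \to \|f_0\|_{L^1_2}$. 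Choosing the constants that define $\Omega$ as $C_0^{(n)} = \|f_0^n\|_{L^1}$, $C_2^{(n)} = \|f_0^n\|_{L^1_2}$, and $C_k^{(n)}$ large enough (possible since $\|f_0^n\|_{L^1_{\bar{k}_*}} < \infty$), Theorem \ref{cauchy-1} furnishes a unique mass- and energy-conserving solution $f^n \in \mathcal{C}([0,\infty),\Omega^{(n)}) \cap \mathcal{C}^1((0,\infty),L^1_2)$, where $\Omega^{(n)}$ denotes the set $\Omega$ with this choice of constants.

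Because $C_0^{(n)}$ and $C_2^{(n)}$ converge, they are bounded above, and $C_0^{(n)}$ is bounded below by a positive number; consequently every constant in \eqref{Ek} built out of $C_0^{(n)},C_2^{(n)}$ is uniform in $n$. Fix $\delta > 0$ no larger than the $2^+$-exponent of $f_0$, so that $2 < 2+\delta$. The propagation estimate of Theorem \ref{Th L1} (part 2, with $k = 2+\delta$) then gives $\sup_n \sup_{t \ge 0} \|f^n(t)\|_{L^1_{2+\delta}} \le M < \infty$ — and, importantly, this bound is valid all the way down to $t=0$, unlike the generation estimate, which degenerates there. For $t \ge t_0 > 0$ the generation estimates of Theorem \ref{Th L1} (part 1) additionally bound $\|f^n(t)\|_{L^1_k}$ uniformly in $n$ for every $k$.

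For stability, set $h^{n,m} := f^n - f^m$; bilinearity of $Q$ gives $\partial_t h^{n,m} = Q(h^{n,m},f^n) + Q(f^m,h^{n,m})$, and testing against $\mathrm{sgn}(h^{n,m})\,\la v,I\ra^2$ — with the loss term absorbing the gain contribution, precisely as in the one-sided Lipschitz step of Theorem \ref{cauchy-1}, which now only invokes the uniform $L^1_{2+\delta}$ control of the previous paragraph — yields $\tfrac{\md}{\md t}\|h^{n,m}(t)\|_{L^1_2} \le \Lambda\,\|h^{n,m}(t)\|_{L^1_2}$ with $\Lambda = \Lambda(C_0,M)$ independent of $n,m,t$. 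Grönwall then gives $\|f^n(t) - f^m(t)\|_{L^1_2} \le e^{\Lambda t}\|f_0^n - f_0^m\|_{L^1_2}$, so $\{f^n\}$ is Cauchy in $\mathcal{C}([0,T],L^1_2)$ for every $T$; let $f$ be its limit. Nonnegativity, and the mass and energy values $C_0$ and $\|f_0\|_{L^1_2}$, pass to the limit; Fatou together with the uniform bound gives $\sup_{t\ge 0}\|f(t)\|_{L^1_{2+\delta}} \le M$, so $f(t) \in \tilde\Omega$. Using the Hölder-type continuity of $Q$ in $L^1_2$ under uniform higher-moment bounds (the same estimate appearing in Theorem \ref{cauchy-1}), one passes to the limit in the Duhamel identity $f^n(t) = f_0^n + \int_0^t Q(f^n,f^n)\,\md s$ to obtain $f(t) = f_0 + \int_0^t Q(f,f)\,\md s$, whence $f \in \mathcal{C}^1((0,\infty),L^1_2)$ by regularity of $s \mapsto Q(f(s),f(s))$ on $(0,\infty)$. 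Uniqueness is obtained by the same device: any two solutions in $\mathcal{C}([0,\infty),\tilde\Omega)$ with identical initial datum both satisfy the uniform $L^1_{2+\delta}$ bound by the propagation part of Theorem \ref{Th L1}, so the Grönwall argument applied to their difference forces them to coincide.

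The main obstacle I anticipate is keeping the Grönwall constant $\Lambda$ uniform near $t=0$: the generation estimates of Theorem \ref{Th L1} blow up like $t^{-(k-2)/\g}$ as $t\to 0^+$, so feeding them into the Lipschitz constant would make it non-integrable there. The resolution — and the reason the scheme is arranged this way — is that the one-sided Lipschitz estimate for $Q$ requires only the $L^1_{2^+}$ norm, which is controlled uniformly up to and including $t=0$ by the $L^1$-\emph{propagation} estimate rather than by generation. A secondary technical point is the passage to the limit in the quadratic term $Q(f^n,f^n)$, which relies on the continuity estimates for $Q$ already established for Theorem \ref{cauchy-1}.
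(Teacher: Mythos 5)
Your proposal is correct and takes essentially the same route the paper indicates: approximate $f_0\in\tilde\Omega$ by truncations lying in $\Omega$, solve via Theorem \ref{cauchy-1}, and pass to the limit using the one-sided Lipschitz property of $Q$, which requires only the $L^1_{2^+}$ moments that are uniformly propagated (not generated) and hence controlled down to $t=0$. Your identification of this last point as the crux is exactly the observation the paper makes before stating the theorem.
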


\subsection{$L^p$-theory, $p\in (1,\infty]$}\label{Sec: Lp}

This section considers the Dirichlet form \eqref{Diri} for $p>1$. The estimate is derived considering separately $Q^+$ and $Q^-$ and using an absorption argument.

For the gain part, the idea is to find a suitable representation of $	\mathcal{Q}^+_{p}[f, g] $ via an averaging operator that will incorporate effects of collisions, and to use the natural entropy to introduce smallness \cite{Alonso-Gamba-Task, Alonso-Colic}.

To illustrate the idea, we introduce a more general form, for any suitable test function $\chi(v,I)$,
\begin{equation}\label{Diri 2}
	{	\mathcal{Q}}^+(\chi)[f, g] = 	\int_{{\mathbb{R}^3\times\mathbb{R}_+} }   Q^+(f, g) \ \chi  \ \la v,I \ra^{k \, p}  \ \md v \, \md I, \quad \text{for any} \ p \geq 1, \, k\geq 0,
\end{equation}
and then setting $\chi = f^{p-1}$ we simply get $		\mathcal{Q}^+(f^{p-1})[f, g] = \mathcal{Q}^+_{p}[f, g]$. 

Let us find a suitable averaging operator. Exploiting the assumption on the collision kernel, together with the weight  arrangement valid for $\mathbb{S}^2_+$ (Lemma 5.3. in \cite{Alonso-Colic}), for the conjugate indices $p, q \geq 1$,
\begin{equation}\label{E decom}
	\left( \frac{E}{m}\right)^{\g/2}  \leq  2^{\frac{3\g}{2q}} \, r^{-{\frac{\g}{2q}}}  \la v', I' \ra^{\frac{\g}{q}}  
	\la v_*, I_* \ra^{\g}  \la v, I \ra^{\frac{\g}{p}},
\end{equation}
the weak form \eqref{weak form Lp}, for   $k=0$, implies 
\begin{align}
	{	\mathcal{Q}}^+&(\chi)[f, g]  
	\nonumber	\\ & \leq  \int
	f \,  g_*\, 	\left( \frac{E}{m}\right)^{\g/2}  \chi(v',I')
	b(\hat{u}\cdot \sigma) \,  \tilde{b}^{ub}(r, R)  \,  d_\alpha(r,R)  \, \dall
	\nonumber	\\
	&	\leq 2^{\frac{3\g}{2q}} 	\int_{(\mathbb{R}^3\times\mathbb{R}_+)^2 } 
	f \, \la v, I \ra^{\frac{\g}{p}}\,  g_*\,  \la v_*, I_* \ra^{\g} 
	\,	\mathcal{S}(\chi\la\cdot\ra^{{\frac{\g}{q}}  })(v, I, v_*, I_*) \, \md v_* \, \md I_*  \, \md v \, \md I, \label{weak Lp}
\end{align}
where we have introduced the averaging operator \cite{Alonso-Colic} in the spirit of \cite{Gamba-Panf-Vill},
\begin{equation}\label{S operators}
	\mathcal{S}(\chi)(v, I, v_*, I_*) = \ipar  \chi(v', I') \, b(\hat{u}\cdot\sigma) \, r^{-\frac{\g}{2 q}} \,  \tilde{b}^{ub}(\rR) \,d_\alpha(\rR) \, \dpar,
\end{equation}
and $q\geq 1$ is such that the following constant $\rho_q$ is finite
\begin{equation}\label{cond sing r}
	\rho_q = \int_{[0,1]^2} r^{-\left(1+\frac{\g}{2}\right)\frac{1}{q}}(1-R)^{-\frac{1}{q}} \,  \tilde{b}^{ub}(\rR) \, d_\alpha(\rR) \, \md r \, \md R < \infty.
\end{equation}

\begin{lemma}[Estimate on the averaging operator, Lemmas 5.1 and 5.2 from \cite{Alonso-Colic}]\label{Lemma S operators} 
	Let $q\geq1$ be such that the constant \eqref{cond sing r} is finite. For a suitable test function $\chi \in L^{q}$, the following estimates on  $\mathcal{S}$ hold, 
	\begin{equation}\label{S+ estimate L1}
		\begin{split}
			&\text{(a)} \quad   \text{ for } \ 		b\in L^1(\mathbb{S}^2_+), \quad	\sup_{(v_*, I_*)}	\left\|  \mathcal{S}(\chi) \right\|_{L^{q}(\md v\, \md I)}  \leq \| b \|_{L^1}   \, 2^{7/(2q)}   \, \rho_q \,    \| \chi\|_{L^{q}}.\\
			&\text{(b)} \quad  
			\text{In  addition, when}  \  b\in L^\infty(\mathbb{S}^2_+), \text{the following extra estimate holds}\\
			& \qquad \qquad	\sup_{(v, I)}	\left\|  \mathcal{S}(\chi) \right\|_{L^{q}(\md v_*\, \md I_*)}  \leq 4\,\pi \,  \| b \|_{L^\infty }  \,2^{7/(2q)}    \, \rho_q \,    \| \chi\|_{L^{q}}.
		\end{split}
	\end{equation}
\end{lemma}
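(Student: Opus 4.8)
## Proof Strategy for Lemma on the Averaging Operator

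The plan is to exploit the fact that $\mathcal{S}(\chi)(v,I,v_*,I_*)$ is built from the map $(v,I)\mapsto(v',I')$ defined by the Borgnakke-Larsen rules \eqref{coll rules}, integrated against a fixed positive weight in $(r,R,\sigma)$; the estimate then reduces to a change-of-variables computation controlling the Jacobian of $(v,I)\mapsto(v',I')$ with $(v_*,I_*)$ held fixed, followed by an application of Minkowski's integral inequality to move the $L^q$-norm inside the integral over the parameters $(r,R,\sigma)$. I would organize the argument in the following steps.

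First, I would fix $(v_*,I_*)$ and the parameters $(r,R,\sigma)\in[0,1]^2\times\mathbb{S}^2_+$ and analyze the substitution $(v,I)\mapsto (v',I')$ from \eqref{coll rules}. Writing $E=\tfrac{m}{4}|v-v_*|^2+I+I_*$, one has $v'=\tfrac{v+v_*}{2}+\sqrt{RE/m}\,\sigma$ and $I'=r(1-R)E$. A direct computation of the Jacobian determinant of this map --- this is the weight-arrangement content of Lemma 5.3 in \cite{Alonso-Colic} together with the singularity bookkeeping --- produces a factor whose inverse is integrable precisely when the combination $r^{-(1+\g/2)/q}(1-R)^{-1/q}$ appearing in \eqref{cond sing r} is integrable against $d_\alpha(\rR)\,\tilde b^{ub}(\rR)$; this is why the hypothesis $\rho_q<\infty$ enters, and it is also where the constants $2^{7/(2q)}$ and $2^{3\g/(2q)}$-type powers are generated by the $\sqrt{RE/m}$ scaling and the $2^{1/q}$ losses in splitting energies. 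I expect this Jacobian/change-of-variables bookkeeping to be the main obstacle: one must track the singular behaviour in $r$ near $0$ and in $R$ near $1$ carefully enough that the resulting integral over $(r,R)$ is exactly $\rho_q$ (up to the explicit constant), and one must check that the map is a diffeomorphism onto its range for a.e. parameter value so that the substitution is legitimate.

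Second, with the Jacobian bound in hand, I would apply Minkowski's integral inequality (the continuous triangle inequality for the $L^q(\md v\,\md I)$-norm) to the definition \eqref{S operators}, pulling the norm past the $\dpar$-integration:
\begin{equation*}
\left\| \mathcal{S}(\chi)(\,\cdot\,,\,\cdot\,,v_*,I_*) \right\|_{L^q(\md v\,\md I)} \le \ipar \left\| \chi(v',I') \right\|_{L^q(\md v\,\md I)}\, b(\hat u\cdot\sigma)\, r^{-\frac{\g}{2q}}\,\tilde b^{ub}(\rR)\, d_\alpha(\rR)\,\dpar.
\end{equation*}
For each fixed $(r,R,\sigma)$ the inner norm is $\|\chi(v',I')\|_{L^q(\md v\,\md I)}$, which by the change of variables of Step~one equals $\|\chi\|_{L^q}$ times the (bounded, parameter-dependent) Jacobian factor. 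Collecting the parameter-dependent factors --- the $b(\hat u\cdot\sigma)$ integrating to $\|b\|_{L^1}$ over $\mathbb{S}^2_+$ by \eqref{b integrable}, the remaining $r,R$ powers combining with $r^{-\g/(2q)}d_\alpha$ to reproduce the integrand of $\rho_q$, and the stray $2^{7/(2q)}$ --- yields part (a), and the bound is uniform in $(v_*,I_*)$ since that variable entered only through the translation and the value of $E$, both of which are absorbed by the translation-invariance and scaling-invariance of Lebesgue measure on $(v,I)$.

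Third, for part (b) I would repeat the argument but instead hold $(v,I)$ fixed and integrate the $L^q$-norm over $(v_*,I_*)$. The same change of variables applies with the roles adjusted; now the angular factor $b(\hat u\cdot\sigma)$ is bounded by $\|b\|_{L^\infty}$ and the $\sigma$-integration over $\mathbb{S}^2_+$ contributes the surface measure $2\pi$, while the full sphere would give $4\pi$ --- this accounts for the $4\pi\|b\|_{L^\infty}$ prefactor (the factor $4\pi$ rather than $2\pi$ coming from the symmetrization of $b$ discussed in the footnote after \eqref{assumpt B factor}, or equivalently from estimating on all of $\mathbb{S}^2$). The rest of the constants are identical to part (a). I would close by remarking that the restriction $\chi\in L^q$ is exactly what makes the right-hand sides finite, and that the conjugacy relation $1/p+1/q=1$ is not needed for this lemma in isolation --- it only becomes relevant when \eqref{weak Lp} is used with $\chi=f^{p-1}\la\cdot\ra^{\g/q}$ downstream.
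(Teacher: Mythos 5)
Your Minkowski step in part~(a) has a genuine gap. After pulling the $L^q(\md v\,\md I)$-norm inside the $\dpar$-integral, the correct integrand is $\left\|\chi(v',I')\,b(\hat u\cdot\sigma)\right\|_{L^q(\md v\,\md I)}$, \emph{not} $\left\|\chi(v',I')\right\|_{L^q(\md v\,\md I)}\,b(\hat u\cdot\sigma)$ as you wrote: the factor $b(\hat u\cdot\sigma)$ depends on $v$ through $\hat u=(v-v_*)/|v-v_*|$, so it cannot be treated as a constant when integrating in $(v,I)$. If you keep it inside the norm and then change variables $(v,I)\mapsto(v',I')$ at fixed $\sigma$ (the Jacobian is indeed the clean constant $\tfrac18 r(1-R)$), you are left with $\int|\chi(v',I')|^q\,b(\hat u\cdot\sigma)^q\,\md v'\,\md I'$ with $\hat u$ now a complicated function of $(v',I')$ --- and the $q$-th power of $b$ is not controlled by $\|b\|_{L^1}$ when $b$ is merely integrable. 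This is precisely the step your proposal cannot close.

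The paper avoids this by applying Minkowski only in $(r,R)$ and then H\"older in $\sigma$ (splitting $b=b^{1/p}b^{1/q}$), yielding the intermediate bound $\|b\|_{L^1}^{1/p}\,X^{1/q}$ with $X=\int_{\mathbb{S}^2_+}\int|\chi(v',I')|^q\,b(\hat u\cdot\sigma)\,\md v\,\md I\,\md\sigma$ containing a \emph{single} power of $b$. The change of variables is then performed on $X$ with the $\sigma$-integral still present, so that $\hat u\cdot\sigma$ can be re-expressed jointly in the new variables; this is where the restriction to $\mathbb{S}^2_+$ is used essentially and where the factor $2^{7/2}$ is produced, and it is also why the resulting estimate is only for the $L^q$-norm in $(v,I)$ and not in $(v_*,I_*)$. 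Your part~(b) is essentially sound, because with $b\in L^\infty$ one can pull out $\|b\|_{L^\infty}$ pointwise before taking any norm, which is exactly the simplification the paper exploits; the $4\pi$ then comes from replacing $\mathbb{S}^2_+$ by $\mathbb{S}^2$ once the $\sigma$-domain no longer matters, not from the symmetrization footnote. In short: the missing idea in your part~(a) is the H\"older step in $\sigma$ that keeps $b$ at first power before the $L^q$-norm is computed.
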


\Proof The part $(a)$ uses integral inequalities in a convenient way to get
\begin{equation}\label{S+ est 1}
	\left\| 	\mathcal{S}(\chi) \right\|_{L^{q}(\md v\, \md I)}
	\leq  \| b \|_{L^1}^{1/p} 
	\int_{[0,1]^2} X^{1/q} \	 r^{-\frac{\g}{2 q}}  \,  \tilde{b}^{ub}(\rR) \,d_\alpha(\rR) \,  \md r \, \md R,
\end{equation}
where 
\begin{equation*}
	X =	  \int_{\mathbb{S}^2_+} \int_{\mathbb{R}^3\times \mathbb{R}_+}  \left| \chi(v', I') \right|^{q} \, b(\hat{u}\cdot \sigma) \,  \md v \, \md I   \, \md \sigma.
\end{equation*}
Then, the $L^q$-norm of $\chi$ is  extracted from $X$  by changing variables $(v, I) \mapsto (v', I')$ for the fixed  $(r, R, \sigma)$. The Jacobian of this transformation causes the singularity  $\left(r(1-R)\right)^{-{1}/{q}}$ in the constant \eqref{cond sing r}.
Expressing $\hat{u}\cdot \sigma$ in terms of new variables requires some efforts and strongly relies on the  domain $\mathbb{S}^2_+$  for $\sigma$. One of the consequences is that the estimate on $	\mathcal{S}(\chi) $ only refers to $L^q$ norm   with respect to $(v,I)$. One can show, see (5.13) in \cite{Alonso-Colic}, 
\begin{equation*}
	X \leq     \frac{2^{7/2}}{r(1-R)} \, \| b \|_{L^1}  \, \| \chi\|^q_{L^{q}}.
\end{equation*}
Inserting this estimate into  \eqref{S+ est 1}, with the remaining integral over variables $r,R$  appearing as a constant $\rho_q$  defined in \eqref{cond sing r}, implies part $(a)$.

In part  $(b)$, the arguments are simpler, since the $L^\infty$-norm of $b$ can be immediately  extracted from \eqref{S+ est 1}  making the domain of integration of $\sigma$ irrelevant and thus avoiding cumbersome  estimates related to the $\sigma$-integral. This allows to estimate $L^q$ norm  of $	\mathcal{S}(\chi) $ with respect to $(v_*,I_*)$ as well. 
\EndProof
The estimates on the averaging operator $\mathcal{S}(\chi)$ in Lemma \ref{Lemma S operators} applied on the weak form \eqref{weak Lp} imply the following proposition.
\begin{proposition}[Estimate on the weak form, Propositions 6.1 and 6.2 in \cite{Alonso-Colic}]\label{Prop: Lp weak form} 
	Let the conjugate pair of indices $p,q\geq1$ be such that the constant $\rho_q$ from \eqref{cond sing r} is finite. Then,  \eqref{Diri 2} can be estimated as follows, for the weight $k\geq0$ and any  suitable test function $\chi$, \\
	\noindent 	(a)	for $b\in L^1(\mathbb{S}^2_+)$,
	\begin{equation}\label{Lp weak form}
		{	\mathcal{Q}}^+(\chi)[f, g]  
		\leq \| b  \|_{L^1 }  \,  2^{\frac{3\g+7}{2q}}  \rho_q \,	   \| \chi  \|_{L^{q}_{ (kp+ {\g})/{q}}} \, \| f    \|_{L^p_{ k+{\g}/{p}}}  \|  	g    \|_{L^1_{k+\g}}.
	\end{equation}
	(b) In  addition, when $b\in L^\infty(\mathbb{S}^2_+)$, the following extra estimate holds
	\begin{equation}\label{Lp weak form inf +}
		{	\mathcal{Q}}^+(\chi)[f, g]  
		\leq  4\pi  \| b \|_{L^\infty }  \,  2^{\frac{3\g+7}{2q}} \,  \rho_q \, 	\| \chi  \|_{L^{q}_{ (kp+ {\g})/{q}}} 
		\| g    \|_{L^p_{ k+\g}}  \|  	f    \|_{L^1_{k+{\g}/{p}}}.
	\end{equation}
\end{proposition}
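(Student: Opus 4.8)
The plan is to repeat, now carrying along the polynomial weight $\langle v,I\rangle^{kp}$, the $k=0$ computation \eqref{weak Lp}--\eqref{S operators} that precedes the statement, and then to apply Lemma \ref{Lemma S operators} in its two forms for parts (a) and (b). First I would pass to the weak form: since in \eqref{Diri 2} the test function already carries the weight, apply the gain part of \eqref{weak form Lp} with $\chi$ replaced by $\chi\langle\cdot\rangle^{kp}$; the factor $(II_*/I'I'_*)^\alpha$ present in $Q^+$ in \eqref{coll operator gain} is absorbed in the change of variables by the invariance \eqref{micro rev} of $\mathcal{B}$, leaving
\[
	\mathcal{Q}^+(\chi)[f,g] = \int \chi(v',I')\,\langle v',I'\rangle^{kp}\, f(v,I)\,g(v_*,I_*)\, \tilde{\mathcal{B}}(v,v_*,I,I_*,\rR)\, b(\hat u\cdot\sigma)\, d_\alpha(r,R)\,\dall .
\]

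Next I would rearrange the weights. Bounding $\tilde{\mathcal{B}}\le \tilde b^{ub}(r,R)(E/m)^{\g/2}$ by \eqref{assump-tilde-B}, and combining the weight--arrangement inequality \eqref{E decom} with the elementary estimate
\[
	\langle v',I'\rangle^2 = \langle v,I\rangle^2 + \langle v_*,I_*\rangle^2 - \langle v'_*,I'_*\rangle^2 \le \langle v,I\rangle^2 + \langle v_*,I_*\rangle^2 - 1 \le \langle v,I\rangle^2\langle v_*,I_*\rangle^2
\]
(valid since $\langle v'_*,I'_*\rangle\ge1$ and $a+b-1\le ab$ for $a,b\ge1$, hence $\langle v',I'\rangle^{k}\le\langle v,I\rangle^{k}\langle v_*,I_*\rangle^{k}$) together with the conjugacy identity $k(p-1)+\g/q=(kp+\g)/q$, I obtain the distribution
\[
	\langle v',I'\rangle^{kp}\Big(\tfrac{E}{m}\Big)^{\g/2} \le 2^{\frac{3\g}{2q}}\, r^{-\frac{\g}{2q}}\, \langle v',I'\rangle^{\frac{kp+\g}{q}}\, \langle v,I\rangle^{k+\frac{\g}{p}}\, \langle v_*,I_*\rangle^{k+\g}.
\]
Recognising the $(\sigma,r,R)$--integral of the primed factor as $\mathcal{S}\big(\chi\langle\cdot\rangle^{(kp+\g)/q}\big)$ in the sense of \eqref{S operators} then reduces the bound to the weighted analogue of \eqref{weak Lp},
\[
	\mathcal{Q}^+(\chi)[f,g] \le 2^{\frac{3\g}{2q}} \int_{(\mathbb{R}^3\times\mathbb{R}_+)^2} f\,\langle v,I\rangle^{k+\frac{\g}{p}}\, g_*\,\langle v_*,I_*\rangle^{k+\g}\, \mathcal{S}\big(\chi\langle\cdot\rangle^{\frac{kp+\g}{q}}\big)(v,I,v_*,I_*)\, \md v_*\,\md I_*\,\md v\,\md I .
\]

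I would then close the estimate with Hölder's inequality and Lemma \ref{Lemma S operators}. For (a), a Hölder step in $(v,I)$ pairs $f\langle\cdot\rangle^{k+\g/p}\in L^p$ with $\mathcal{S}(\cdot)\in L^q(\md v\,\md I)$; the first bound in \eqref{S+ estimate L1}, which is uniform in $(v_*,I_*)$, yields $\|\mathcal{S}(\chi\langle\cdot\rangle^{(kp+\g)/q})\|_{L^q}\le\|b\|_{L^1}\,2^{7/(2q)}\rho_q\,\|\chi\|_{L^q_{(kp+\g)/q}}$, while the remaining integral of $g_*\langle\cdot\rangle^{k+\g}$ is exactly $\|g\|_{L^1_{k+\g}}$, and \eqref{Lp weak form} follows. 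For (b), when $b\in L^\infty$ one instead uses the second bound in \eqref{S+ estimate L1}, which controls $\mathcal{S}$ in $L^q(\md v_*\,\md I_*)$ uniformly in $(v,I)$; now the Hölder step is taken in $(v_*,I_*)$, pairing $g_*\langle\cdot\rangle^{k+\g}\in L^p$ with $\mathcal{S}(\cdot)$, while $f\langle\cdot\rangle^{k+\g/p}$ is integrated trivially in $(v,I)$, producing \eqref{Lp weak form inf +} with the roles of $f$ and $g$ interchanged and the constant $4\pi\|b\|_{L^\infty}$.

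The hard part is the weight bookkeeping in the second step: $\langle v',I'\rangle^{kp}$ must be split so that a full power $\langle v,I\rangle^{k}$ lands on $f$ and $\langle v_*,I_*\rangle^{k}$ on $g$, leaving exactly $\langle v',I'\rangle^{k(p-1)}$ to merge with the $\langle v',I'\rangle^{\g/q}$ that \eqref{E decom} produces; obtaining the clean, $k$-independent constant stated hinges on the sharp inequality $\langle v',I'\rangle^2\le\langle v,I\rangle^2\langle v_*,I_*\rangle^2$ (rather than the cruder $\langle v',I'\rangle^2\le 2\langle v,I\rangle^2\langle v_*,I_*\rangle^2$) and on keeping the singularity $r^{-\g/(2q)}$ confined to the finite constant $\rho_q$ of \eqref{cond sing r}. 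Everything else is routine and already encapsulated in Lemma \ref{Lemma S operators}.
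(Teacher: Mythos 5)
Your proposal is correct and follows essentially the same route the paper (and the cited reference) takes: derive the weighted analogue of \eqref{weak Lp} by combining \eqref{E decom} with the energy-conservation bound $\la v',I'\ra^{2}\leq \la v,I\ra^{2}\la v_*,I_*\ra^{2}$ to split $\la v',I'\ra^{kp}$ into $\la v',I'\ra^{(kp+\g)/q}\la v,I\ra^{k}\la v_*,I_*\ra^{k}$, then conclude by H\"older in $(v,I)$ together with part (a) of Lemma \ref{Lemma S operators} for \eqref{Lp weak form}, and by H\"older in $(v_*,I_*)$ together with part (b) for \eqref{Lp weak form inf +}. The weight bookkeeping, the identity $k(p-1)+\g/q=(kp+\g)/q$, and the resulting constants all check out.
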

In what follows, we make a choice  $\chi = f^{p-1}$, i.e. we consider the $L^p$-Dirichlet form  	$\mathcal{Q}_{p}[f, g]$ defined in \eqref{Diri}. The gain part $\mathcal{Q}^+_p$ uses  the last Proposition \ref{Prop: Lp weak form}, together with a decomposition of the angular kernel and entropy to introduce smallness, while the loss part  $\mathcal{Q}^-_p$  explores entropy to prove coercivness. Estimates on both gain and loss terms require the concept of entropy and, in particular, the following entropy-like quantity depending on time $t\geq0$:
\begin{equation}\label{entropy H0}
	H[f] = \int_{\mathbb{R}^3\times \mathbb{R}_+}  f |\log(f I^{-\alpha})| \md v \, \md I.
\end{equation}
The statements will require $H[f]$ to be bounded for any time $t\geq0$. The following Lemma  explains how this relates to a solution of the Boltzmann equation. 
\begin{lemma}\label{lemma H control}
	If $f$ is a solution of the Boltzmann equation \eqref{Cauchy} with initial data $f_0\in L^1_2 \cap L^p$, $p\in(1,\infty]$, then
	\begin{equation*}
		H[f] \leq C( \| f_0 \|_{L^1_2},  \| f_0 \|_{L^p}), \quad \text{for any} \ \ t \geq 0.
	\end{equation*}
\end{lemma}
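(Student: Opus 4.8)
The plan is to reduce the claim to a time‑independent functional estimate together with two structural facts of the space homogeneous flow. Denote by $\mathcal{H}[f]:=\int_{\mathbb{R}^3\times\mathbb{R}_+} f\log(fI^{-\alpha})\,\md v\,\md I$ the signed entropy and by $\mathcal{H}_-[f]:=\int_{\mathbb{R}^3\times\mathbb{R}_+} \big(f\log(fI^{-\alpha})\big)_-\,\md v\,\md I$ its negative part ($a_-=\max(-a,0)$), so that $H[f]=\mathcal{H}[f]+2\,\mathcal{H}_-[f]$. By the H-theorem, $\tfrac{\md}{\md t}\mathcal{H}[f(t)]=\mathcal{D}(f(t))\le 0$, the extra term vanishing by conservation of mass, hence $\mathcal{H}[f(t)]\le\mathcal{H}[f_0]$ for all $t\ge0$; and testing the bilinear weak form \eqref{weak form L1} with $g=f$ against the collision invariants $1$ and $\tfrac{m}{2}|v|^2+I$ gives conservation of mass and energy, hence $\|f(t)\|_{L^1_2}=\|f_0\|_{L^1_2}$ for all $t\ge0$. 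It therefore suffices to establish the two time‑independent bounds, valid for all nonnegative $g$ in the indicated spaces:
\[
\textup{(i)}\quad H[g]\le C\big(\|g\|_{L^1_2},\|g\|_{L^p}\big),\qquad\qquad
\textup{(ii)}\quad \mathcal{H}_-[g]\le \|g\|_{L^1_2}+C(\alpha,m),
\]
since then $H[f(t)]=\mathcal{H}[f(t)]+2\mathcal{H}_-[f(t)]\le \mathcal{H}[f_0]+2\mathcal{H}_-[f(t)]\le H[f_0]+2\|f_0\|_{L^1_2}+2C(\alpha,m)$, and \textup{(i)} applied to $f_0$ controls $H[f_0]$. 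Note that this route does not require $f(t)$ itself to stay in $L^p$.

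For \textup{(ii)} I would compare pointwise with a Gaussian. On $\{fI^{-\alpha}<1\}$ split according to whether $fI^{-\alpha}\ge e^{-\la v,I\ra^2}$ or not. In the first region $-\log(fI^{-\alpha})\le\la v,I\ra^2$, so the integrand is at most $f\la v,I\ra^2$, contributing at most $\|f\|_{L^1_2}$. In the second region $fI^{-\alpha}<e^{-\la v,I\ra^2}\le 1/e$ (using $\la v,I\ra^2\ge1$), so by the monotonicity of $t\mapsto t\log(1/t)$ on $(0,1/e)$ one gets $-f\log(fI^{-\alpha})\le I^{\alpha}\,\la v,I\ra^2\,e^{-\la v,I\ra^2}$, whose integral is the constant $\int_{\mathbb{R}^3\times\mathbb{R}_+} I^{\alpha}\la v,I\ra^2 e^{-\la v,I\ra^2}\,\md v\,\md I$, finite precisely because $\alpha>-1$. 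This proves \textup{(ii)}.

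For \textup{(i)} I would use $|\log(gI^{-\alpha})|\le|\log g|+|\alpha|\,|\log I|$ and treat the pieces separately. The positive part of $\log g$ is controlled by $L^p$: since $\log x\le\tfrac{1}{p-1}x^{p-1}$ for $x\ge1$, one has $\int (g\log g)_+\,\md v\,\md I\le\tfrac1{p-1}\|g\|_{L^p}^p$ (for $p=\infty$ reduce first to $p=2$ via $\|g\|_{L^2}^2\le\|g\|_{L^\infty}\|g\|_{L^1}$). The negative part of $\log g$ is estimated as in \textup{(ii)} but without the $I^\alpha$ weight, costing $\|g\|_{L^1_2}+C$. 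The region $\{I\ge1\}$ of $|\alpha|\int g|\log I|$ is bounded by $|\alpha|\,m\|g\|_{L^1_2}$ using $\log I\le I\le m\la v,I\ra^2$. For the singular region $\int_{\{I<1\}}g\log(1/I)\,\md v\,\md I$ I would invoke the Fenchel--Young inequality $xy\le x\log x-x+e^{y}$ with $x=g$ and $y=\theta\log(1/I)-\tfrac12|v|^2$ for a fixed $\theta\in(0,1)$; integrating over $\{I<1\}$, discarding $-g$, and using $g\log g\le(g\log g)_+$ and $\tfrac12|v|^2\le\la v,I\ra^2$,
\[
\theta\!\int_{\{I<1\}}\! g\log(1/I)\,\md v\,\md I\le \int (g\log g)_+\,\md v\,\md I+\|g\|_{L^1_2}+\Big(\!\int_0^1\! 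I^{-\theta}\md I\Big)\!\Big(\!\int_{\mathbb{R}^3}\! e^{-|v|^2/2}\md v\Big),
\]
whose right‑hand side is finite and bounded by $C(\|g\|_{L^1_2},\|g\|_{L^p})$. Summing the contributions gives \textup{(i)}.

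The functional inequalities \textup{(i)}--\textup{(ii)} are elementary; the genuine point is the rigorous justification of the entropy monotonicity $\mathcal{H}[f(t)]\le\mathcal{H}[f_0]$ for the solutions at hand. One must either establish enough integrability to differentiate $t\mapsto\mathcal{H}[f(t)]$ and identify the derivative with the non‑positive entropy production $\mathcal{D}(f(t))$, or else prove the integrated inequality directly by a truncation/approximation argument, the logarithm being unbounded. This is where one uses that $f$ lies in the solution class of Theorem~\ref{Th 2}---so that $\mathcal{H}[f_0]$ is already finite by \textup{(i)}, the solution has all polynomial moments for $t>0$, and the weak form \eqref{weak form L1} may legitimately be tested against $\log(fI^{-\alpha})$---together with the cut‑off and $\g$-hard‑potential structure of the kernel.
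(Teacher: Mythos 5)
Your proposal is correct and follows essentially the same route as the paper: monotonicity of the signed entropy $\mathcal{H}$ via the H-theorem, control of the gap between $H$ and $\mathcal{H}$ (the negative part of the entropy) by the conserved mass and energy, and finiteness of the initial entropy from $f_0\in L^1_2\cap L^p$. The only differences lie in the elementary inequalities used for the last two steps --- you use Gaussian comparison and Fenchel--Young where the paper uses $|\log x|\le \log x+(\alpha+4)\,x^{-1/(\alpha+4)}$ together with H\"older interpolation --- and these are interchangeable.
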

\begin{proof}
	First, 	notice that  H-theorem \eqref{entropy production } implies that a solution of the  Boltzmann equation \eqref{Cauchy} will have  bounded entropy  if initially so, i.e.
	\begin{equation*}
		\mathcal{H}(f) := \int_{\mathbb{R}^3\times \mathbb{R}_+}       f(t,v, I)   \log \! \left(f(t,v, I)  I^{-\alpha} \right)  \md v \, \md I \leq  \mathcal{H}(f_0)<\infty, \quad t\geq 0.
	\end{equation*}
	Moreover, the conservation of energy implies  $\| f\|_{L^1_2} = \|f_0\|_{L^1_2}$ for the solution of the Boltzmann equation. Thus, the elementary inequality valid for any $x>0$,
	\begin{equation*}
		| \log x | \leq \log(x)	+	(\alpha+4) \, x^{-\frac1{\alpha+4}}, \ \alpha>-1,
	\end{equation*}
	together with H\"older's inequality relate $H(f)$ to $\mathcal{H}(f_0)$  and $\|f_0\|_{L^1_2}$, 
	\begin{align}
		H[f] & \leq 	\mathcal{H}(f)  +    (\alpha+4) \int_{\mathbb{R}^3\times \mathbb{R}_+}      f(t,v, I)^{ \frac{\alpha+3}{\alpha+4}}  I^{\frac{\alpha}{\alpha+4}}\,\md v \, \md I \nonumber\\
		& \leq	\mathcal{H}(f_0) +   (\alpha+4)    \| \langle \cdot \rangle^{-6{  -2\alpha}} {  I^{\alpha} }   \|^{\frac1{   \alpha+4}}_{L^{1}}    \ \| f_0 \|_{L^1_2}^{ \frac{\alpha+3}{\alpha+4}}. \label{pomocna 2}
	\end{align}
	It remains to prove that $f_0\in L^1_2 \cap L^p$ implies 	$\mathcal{H}(f_0) <\infty$.    To that end,  first the integrand of 	$\mathcal{H}(f_0)$ is estimated,
	\begin{equation} 
		f_0\log(f_0I^{-\alpha})  \leq  f_0 |\log f_0|  +   |\alpha| \ f_0  \ |\log I|. \label{integ H bef}
	\end{equation}
	
	When $p\in(1,\infty)$, an  elementary inequality is used, 
	\begin{equation*}
		x 	| \log x | \leq 4\, x^{\frac{3}{4}}	+ \frac{x^{p}}{2(p-1)}, \ p>1.
	\end{equation*}
	implying 
	\begin{equation}
		f_0\log(f_0I^{-\alpha})    \leq 4 f_0^{\frac{3}{4}}	+ \tfrac{1}{2(p-1)} f_0^{p}+ |\alpha| \ f_0  \ |\log I|.\label{integ H}
	\end{equation}
	H\"older's inequality applied to the $L^1$-norm of the first term in \eqref{integ H} implies
	\begin{equation}\label{integ H first}
		\begin{split}
			\| f_0^{\frac{3}{4}} \|_{L^1} \leq  \| \langle \cdot \rangle^{-6}    \|^{\frac1{ 4}}_{L^{1}}    \ \| f_0 \|_{L^1_2}^{ \frac{3}{4}},
		\end{split}
	\end{equation}
	and 
	applied twice to the $L^1$-norm of the  last term    in \eqref{integ H}, for the conjugate pairs $(s,s')$,  $ s\in(1,\min\{\tfrac{2p}{p+1},\tfrac{5}{4}\})$, and $(r=\tfrac{p-1}{s-1},r')$ implies, for $\theta =\tfrac{p-s}{s(p-1)}$,
	\begin{equation}\label{pomocna 1}
		\begin{split}
			\|  f_0   \log I \|_{L^1}
			&  \leq \left(  \| f_0^{s(1-\theta)} \|_{L^r}  \ \| f_0^{s \theta} \la \cdot \ra^{  s} \|_{L^{r'}}\right)^{1/s} \| \la \cdot \ra^{- 1} \log I  \|_{L^{s'}} 
			\\ & \leq  \|f_0 \|_{L^p}^{1-\theta}\|f_0\|_{L^1_2}^{\theta} \| \la \cdot \ra^{- 1} \log I  \|_{L^{s'}}.
		\end{split}
	\end{equation}
	Thus, integration of  \eqref{integ H}  gives
	\begin{equation}\label{pomocna 3}
		\begin{split}
			\mathcal{H}(f_0) \leq 4  \| \langle \cdot \rangle^{-6}    \|^{\frac1{ 4}}_{L^{1}}    \ \| f_0 \|_{L^1_2}^{ \frac{3}{4}} + \tfrac{1}{2(p-1)}   \| f_0 \|_{L^p}^p + |\alpha|  \|f_0 \|_{L^p}^{1-\theta}\|f_0\|_{L^1_2}^{\theta} \| \la \cdot \ra^{- 1} \log I  \|_{L^{s'}}.
		\end{split}
	\end{equation}
	The statement follows adding estimates \eqref{pomocna 2} and \eqref{pomocna 3}.
	
	When  $p=\infty$,   in which case $f_0 \in L^{1}_{2}\cap L^{\infty}$, then the first term in the integrand of $\mathcal{H}(f_0) $ given in \eqref{integ H bef} can be bounded as
	$$f_0|\log f_0|\leq f_0^{3/4} \max_{0\leq y \leq \|f_0\|_{L^\infty}}(y^\frac14 |\log y|).$$ 
	H\"older's inequality as in \eqref{integ H first} then implies
	\begin{equation}\
		\begin{split}
			\|  f_0   \log f_0 \|_{L^1}
			&  \leq  \| \la \cdot\ra^{-6}\|_{L^1}^{1/4} \|f_0\|_{L^1_2}^{3/4}   \max_{0\leq y \leq \|f_0\|_{L^\infty}}(y^\frac14 |\log y|).
		\end{split}
	\end{equation}
	The $L^1$-norm of second term in \eqref{integ H bef} can be estimated as
	\begin{equation*}
		\begin{split}
			\|  f_0   \log I \|_{L^1} = 	\|  f_0  \left( \log I \mathds{1}_{I\leq 1} + \la \cdot\ra^2 \mathds{1}_{I> 1} \right) \|_{L^1}  
			\leq  \|f_0 \|_{L^\infty}  +  \|f_0\|_{L^1_2}. 
		\end{split}
	\end{equation*}
	In conclusion, integration of \eqref{integ H bef} gives 
	\begin{equation}\label{pomocna 4}
		\begin{split}
			\mathcal{H}(f_0) \leq \| \la \cdot\ra^{-6}\|_{L^1_0}^{1/4} \|f_0\|_{L^1_2}^{3/4}   \max_{0\leq y \leq \|f_0\|_{L^\infty}}(y^\frac14 |\log y|) + |\alpha|  \left( \|f_0 \|_{L^\infty}  +  \|f_0\|_{L^1_2}\right),
		\end{split}
	\end{equation}
	and in conjunction with \eqref{pomocna 2} proves the statement when $p=\infty$.
\end{proof}
The  entropy-like quantity  \eqref{entropy H0} is firstly exploited to prove Lower Bound Lemma  \ref{Lemma Low b} and coercivity estimates on the Dirichlet form \eqref{Diri} associated to the loss operator as stated in the upcoming Proposition \ref{Prop: coll freq}.
\begin{lemma}[Lower Bound Lemma 7.1 in \cite{Alonso-Colic}] \label{Lemma Low b}Assume $g \in {L^1_{ \g}}$,  $\g \in[0,2]$, such that \eqref{entropy H0} is finite.
	Then, there exists a constant $  {c}^{lb}[g]  > 0$   explicitly computed in \cite{Alonso-Colic} that depends on $H[g]$ and   $ \|g\|_{L^1_{ \g}}$,   such that  
	\begin{equation}\label{lbl}
		\int_{{\mathbb{R}^3\times\mathbb{R}_+} }   g(v_*, I_*) \,  \left( \frac{E}{m} \right)^{\g/2} \, \md v_* \, \md I_* \geq    {c}^{lb}[g] \, \la v, I \ra^{\g}.
	\end{equation}
\end{lemma}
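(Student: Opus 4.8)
The plan is to reduce the bound to three ingredients and then handle the three parts of the weight $\la v,I\ra^{\g}$ separately. Since the case $\g=0$ is trivial (the left-hand side equals $\|g\|_{L^1}$), assume $\g\in(0,2]$. Using $(a+b)^{\g/2}\le a^{\g/2}+b^{\g/2}$ and $(1+x)^{\g/2}\le 1+x^{\g/2}$ one gets $\la v,I\ra^{\g}\le 1+2^{-\g/2}|v|^{\g}+m^{-\g/2}I^{\g/2}$, so it suffices to bound $\int g\,(E/m)^{\g/2}\,\md v_*\md I_*$ from below by a fixed positive multiple of each of $1$, $|v|^{\g}$ and $I^{\g/2}$, uniformly in $(v,I)$. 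The three ingredients are: (i) the pointwise estimates $E/m=\tfrac14|v-v_*|^2+\tfrac1m(I+I_*)\ge\tfrac1m I$ and $E/m\ge\tfrac14|v-v_*|^2$; (ii) tightness of the mass, $\int_{\{|v_*|>R\}\cup\{I_*>R\}}g\,\md v_*\md I_*\le\big[(1+\tfrac{R^2}{2})^{-\g/2}+(1+\tfrac Rm)^{-\g/2}\big]\|g\|_{L^1_{\g}}$ by Chebyshev, which tends to $0$ as $R\to\infty$; and (iii) a no-concentration estimate: finiteness of $H[g]$ together with $\|g\|_{L^1}$ forces $\int_A g$ to be small whenever the Lebesgue measure of $A$ is small.

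The $I^{\g/2}$ contribution is immediate from (i): $(E/m)^{\g/2}\ge m^{-\g/2}I^{\g/2}$ yields $\int g\,(E/m)^{\g/2}\,\md v_*\md I_*\ge m^{-\g/2}\|g\|_{L^1}\,I^{\g/2}$. For the $|v|^{\g}$ contribution I would invoke (ii) to fix $R_0$, depending only on $\|g\|_{L^1_{\g}}$ and $\|g\|_{L^1}$, so that the bounded good set $A_0=\{|v_*|\le R_0,\ I_*\le R_0\}$ carries at least half the mass, $\int_{A_0}g\ge\tfrac12\|g\|_{L^1}$; then for $|v|\ge 2R_0$ and $(v_*,I_*)\in A_0$ we have $|v-v_*|\ge|v|/2$, hence $(E/m)^{\g/2}\ge(|v|/4)^{\g}$ by (i), and integrating over $A_0$ bounds the left-hand side below by $\tfrac12\|g\|_{L^1}(|v|/4)^{\g}$; for $|v|<2R_0$ the quantity $|v|^{\g}$ is bounded and is absorbed into the constant contribution.

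The constant contribution is the delicate point, and it is here that (iii) enters. One needs $\int g\,(E/m)^{\g/2}\,\md v_*\md I_*\ge c>0$ for \emph{every} $(v,I)$. Using $(E/m)^{\g/2}\ge 4^{-\g/2}|v-v_*|^{\g}$ and splitting the $(v_*,I_*)$-domain into $\{|v_*-v|>\delta\}$, which contributes at least $4^{-\g/2}\delta^{\g}\big(\|g\|_{L^1}-\int_{\{|v_*-v|\le\delta\}}g\big)$, and its complement, one is left to show $\int_{\{|v_*-v|\le\delta\}}g\le\tfrac12\|g\|_{L^1}$ for a suitable $\delta$. To that end, first discard the slab $\{I_*>L\}$, whose $g$-mass is at most $(1+L/m)^{-\g/2}\|g\|_{L^1_{\g}}$ by (ii); the remaining set $\{|v_*-v|\le\delta,\ I_*\le L\}$ has Lebesgue measure at most $C\delta^3 L$, so by (iii) its $g$-mass falls below $\tfrac14\|g\|_{L^1}$ once $\delta$ is small. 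Choosing $L$ and then $\delta$ (depending only on $\|g\|_{L^1}$, $\|g\|_{L^1_{\g}}$, $H[g]$, $\g$, $m$ and $\alpha$) produces the desired $c>0$; collecting the three contributions gives $c^{lb}[g]>0$ with the stated dependence.

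I expect the genuine obstacle to be the no-concentration estimate (iii): converting finiteness of $H[g]=\int g\,|\log(gI^{-\alpha})|$ and of the mass into a quantitative ``$|A|\le\delta\Rightarrow\int_A g\le\varepsilon(\delta)$'' is not quite the textbook statement, because what is controlled is $\int g\log(gI^{-\alpha})$ rather than $\int g\log g$; on the bounded slab $\{I_*\le L\}$ one therefore has to trade the term $|\alpha|\,g\,|\log I|$ (including the contribution of $I_*$ near $0$, which is not controlled by $\|g\|_{L^1_{\g}}$ alone) against the entropy and the moment, which is exactly the computation carried out explicitly in \cite{Alonso-Colic}.
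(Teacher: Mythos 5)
Your proposal is correct and follows essentially the same route as the proof in \cite{Alonso-Colic} that the paper cites: split $\la v,I\ra^{\g}$ into the three contributions $1$, $|v|^{\g}$, $I^{\g/2}$, use the pointwise bounds on $E/m$ together with tightness from the $L^1_{\g}$-moment, and use the entropy to rule out concentration of $g$ near $v_*=v$. The one step you defer --- converting finiteness of $H[g]=\int g\,|\log(gI^{-\alpha})|$ into a quantitative no-concentration bound on sets of small measure inside $\{I_*\le L\}$ --- does go through by the standard splitting $\{gI_*^{-\alpha}>K\}\cup\{gI_*^{-\alpha}\le K\}$ (the second set contributing $K\int_A I_*^{\alpha}$, finite since $\alpha>-1$), so you have correctly identified both the mechanism and the only genuinely technical point.
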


\begin{proposition}[Coercivity estimates of the $L^p$-Dirichlet loss form, Proposition 7.2 in \cite{Alonso-Colic}]\label{Prop: coll freq}
	For $g$ satisfying assumptions of the previous Lemma \ref{Lemma Low b}, the following lower bounds hold,\\
	\noindent 	(a)	 on the collision frequency    from  \eqref{coll frequency},
	\begin{equation}\label{lbl coll fr}
		\nu[g](v,I) \geq  \kappa^{lb}  \, c^{lb}[g] \,  \la v, I \ra^{\g}. 	
	\end{equation}
	(b) on the Dirichlet form \eqref{Diri} associated  to the loss operator, 
	\begin{equation}\label{loss bound}
		\mathcal{Q}^-_p[f, g] 
		\geq  A[g]  \ \| f \|_{L^p_{k + \g/p }}^p, \quad    A[g]=  \kappa^{lb}    \ c^{lb}[g],
	\end{equation}
	where  $\kappa^{lb}$ is given in \eqref{kappa p-p}. 
\end{proposition}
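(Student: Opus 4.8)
The plan is to reduce both statements to the Lower Bound Lemma \ref{Lemma Low b}, which already carries all the analytic content; the two parts of the proposition then follow by elementary manipulations.

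For part (a), I would start from the definition \eqref{coll frequency} of $\nu[g]$ and insert the lower bound of the collision-kernel assumption \eqref{assump-tilde-B}, namely $\tilde{\mathcal{B}}\ge \tilde{b}^{lb}(r,R)\,(E/m)^{\g/2}$. The key observation is that the collisional energy $E=\tfrac m4|v-v_*|^2+I+I_*$ depends only on $(v,v_*,I,I_*)$ and not on the collision parameters $(\sigma,r,R)$; hence, using the factorized form \eqref{assumpt B factor}, the integrals over $\mathbb{S}^2_+$ and over $[0,1]^2$ decouple from the integral in $(v_*,I_*)$, and the $(\sigma,r,R)$-integral produces exactly the constant $\kappa^{lb}$ of \eqref{kappa p-p}. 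This yields
\begin{equation*}
	\nu[g](v,I)\ \ge\ \kappa^{lb}\int_{\mathbb{R}^3\times\mathbb{R}_+} g(v_*,I_*)\Big(\tfrac Em\Big)^{\g/2}\,\md v_*\,\md I_*,
\end{equation*}
and applying Lemma \ref{Lemma Low b} (valid since $g$ satisfies its hypotheses) to the remaining integral gives \eqref{lbl coll fr}.

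For part (b), I would use the splitting \eqref{coll operator split}, which gives $Q^-(f,g)=f\,\nu[g]$, so that the Dirichlet loss form \eqref{Diri} becomes $\mathcal{Q}^-_p[f,g]=\int_{\mathbb{R}^3\times\mathbb{R}_+} f^p\,\nu[g]\,\la v,I\ra^{kp}\,\md v\,\md I$. Inserting the pointwise lower bound \eqref{lbl coll fr} just obtained and collecting the polynomial weights, $\la v,I\ra^{\g}\,\la v,I\ra^{kp}=\la v,I\ra^{(k+\g/p)p}$, produces precisely $A[g]\,\|f\|_{L^p_{k+\g/p}}^p$ with $A[g]=\kappa^{lb}c^{lb}[g]$, which is \eqref{loss bound}. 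For $p=\infty$ one argues identically with $\nu[g]\,f\,\la v,I\ra^{k}$ in place of the integrand, or passes to the limit $p\to\infty$ while tracking the constants, as indicated in Section \ref{Sec: Analysis}.

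There is no genuine obstacle at this stage: the difficulty of the coercivity estimate has been entirely absorbed into Lemma \ref{Lemma Low b}, whose proof is where the finiteness of the entropy-like quantity \eqref{entropy H0} is used to prevent $g$ from concentrating in velocity–internal energy. The only points requiring a little care are the factorization of the $(\sigma,r,R)$-integrals in part (a)---legitimate precisely because of \eqref{assumpt B factor} and the $\sigma$-independence of $E$---and the bookkeeping of the polynomial weights in part (b); neither involves more than a direct computation.
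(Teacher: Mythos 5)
Your proof is correct and is exactly the argument the cited reference uses: part (a) follows by inserting the lower bound \eqref{assump-tilde-B}, factoring out the $(\sigma,r,R)$-integral as $\kappa^{lb}$ (legitimate since $\int_{\mathbb{S}^2_+}b(\hat u\cdot\sigma)\,\md\sigma$ is independent of $\hat u$ and $E$ is independent of $(\sigma,r,R)$), and applying Lemma \ref{Lemma Low b}; part (b) is then the identity $Q^-(f,g)=f\,\nu[g]$ plus the weight bookkeeping $\la v,I\ra^{kp+\g}=\la v,I\ra^{(k+\g/p)p}$.
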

Next, the  entropy-like quantity  \eqref{entropy H0} is used to introduce smallness into the Dirichlet form \eqref{Diri} associated to the gain operator, together with following decomposition of the angular kernel  \eqref{b integrable}. Namely, the angular collision kernel $b$ is decomposed, without the loss of generality,  into two suitable $L^{1}-L^{\infty}$ pieces \cite{Alonso-PSPDE,Alonso-Gamba-BAMS,Alonso-Colic}.  For any $\varepsilon>0$ we have the $\varepsilon$-dependent decomposition,
\begin{equation}\label{bij L1 decomp}
	b = b^1 + b^\infty, \quad \text{where}  \ b^1  \in L^1 \ \text{satisfies} \ \| b^1 \|_{L^1} \leq \varepsilon,  \ \text{and}  \  b^\infty\in L^\infty,
\end{equation}
with the norm $\| b^\infty \|_{L^\infty}$ depending on $\varepsilon$.

\begin{proposition}[Estimate on $L^p$-Dirichlet gain form, Proposition 8.1 in \cite{Alonso-Colic}]\label{Prop bi-linear form +}	For suitable $f$ and $g$, $k\geq0$, $\g \in [0,2]$, and $p \in [1,\infty)$, such that $p \, \alpha>-1$ and   $\rho_q$ is finite,    Dirichlet form \eqref{Diri}  with    integrable angular part decomposed as in    \eqref{bij L1 decomp}, can be estimated, for any $\varepsilon, \tilde{\varepsilon} >0$ and $K > 1$,
	\begin{multline}\label{Q+ ij bi-lin est}
		\mathcal{Q}^+_p[f, g] \leq 
		B_{k,p}[f](\varepsilon,\tilde{\varepsilon},K)
		\\
		+  2^{\frac{3\g+7}{2q}}  \rho_q  \Bigg( \varepsilon \,\,     \|  	g    \|_{L^1_{k+\g}}    + \binf  \Bigg(   \frac{	\| 	g  \|_{L^1_{k+\g+1 }}^{\frac{k+\zeta}{k+\zeta+1}}   c[g]  }{(\log K)^{1/( k + \g +1)}}  + \frac{\tilde{\varepsilon}}{q}   \Bigg) \Bigg) \| f    \|^p_{L^p_{k+ {\g}/{p}}},
	\end{multline}
	with the constant $B_{k,p}[f]$  depending on $\varepsilon$, $\tilde{\varepsilon}$, $K$,  and the constant  $c[g]$   given   below in \eqref{pomocna 12}, \eqref{const B}, respectively. 
\end{proposition}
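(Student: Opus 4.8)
The plan is to reduce everything to the two weak-form estimates of Proposition~\ref{Prop: Lp weak form} specialized to the test function $\chi=f^{p-1}$, using that $\mathcal{Q}^+(f^{p-1})[f,g]=\mathcal{Q}^+_p[f,g]$. Since $p$ and $q$ are conjugate one has $\|f^{p-1}\|_{L^q_{(kp+\g)/q}}=\|f\|_{L^p_{k+\g/p}}^{p-1}$, so \eqref{Lp weak form} with $\chi=f^{p-1}$ already yields, for any admissible angular kernel $b$, the bound $\mathcal{Q}^+_p[f,g;b]\le 2^{(3\g+7)/(2q)}\rho_q\,\|b\|_{L^1}\,\|f\|_{L^p_{k+\g/p}}^p\,\|g\|_{L^1_{k+\g}}$, while \eqref{Lp weak form inf +} gives the companion bound with $\|g\|_{L^p_{k+\g}}\,\|f\|_{L^1_{k+\g/p}}$ replacing $\|g\|_{L^1_{k+\g}}\,\|f\|_{L^p_{k+\g/p}}$. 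The hypothesis $p\,\alpha>-1$ is used precisely here: it is what keeps the internal-energy weights carried by $f^{p-1}$ (which is consistent with the $I^{\alpha}$ profile of equilibria) locally integrable, so that $f^{p-1}$ is an admissible test function for the averaging operator $\mathcal{S}$ of Lemma~\ref{Lemma S operators}; the finiteness of $\rho_q$ is inherited from Proposition~\ref{Prop: Lp weak form}.

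Next I would decompose the angular kernel as in \eqref{bij L1 decomp}, $b=b^{1}+b^{\infty}$ with $\|b^{1}\|_{L^1}\le\varepsilon$, and split the second argument by the size of $g$: $g=g\,\mathds{1}_{g>K}+g\,\mathds{1}_{g\le K}$ with $K>1$ free. By linearity of $\mathcal{Q}^+_p$ in $b$ and in $g$ this gives three contributions. The $b^{1}$-part is bounded directly by the first estimate above, producing $2^{(3\g+7)/(2q)}\rho_q\,\varepsilon\,\|g\|_{L^1_{k+\g}}\,\|f\|_{L^p_{k+\g/p}}^p$, the first term of \eqref{Q+ ij bi-lin est}. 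The ``large'' piece $g\,\mathds{1}_{g>K}$ is also bounded by the first estimate with $b=b^{\infty}$ (using $\|b^{\infty}\|_{L^1(\mathbb{S}^2_+)}\le\binf$), and it remains to estimate $\|g\,\mathds{1}_{g>K}\|_{L^1_{k+\g}}$, which is where the entropy enters: a H\"older split between $L^1_{k+\g+1}$ and $L^1_{0}$ gives $\|g\,\mathds{1}_{g>K}\|_{L^1_{k+\g}}\le \big(\int_{\{g>K\}} g\big)^{1/(k+\g+1)}\|g\|_{L^1_{k+\g+1}}^{(k+\g)/(k+\g+1)}$, and on $\{g>K\}$ one has $1\le \log g/\log K$, so that writing $\log g=\log(gI^{-\alpha})+\alpha\log I$, bounding $\int g|\log I|$ as in the proof of Lemma~\ref{lemma H control}, and using the finiteness of $H[g]$ one gets $\int_{\{g>K\}}g\le c[g]/\log K$, hence the second term of \eqref{Q+ ij bi-lin est}.

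For the ``bounded'' piece $g\,\mathds{1}_{g\le K}\in L^1\cap L^\infty$ no $L^1$-moment is small, so here I would switch to the second weak-form estimate \eqref{Lp weak form inf +} with $b=b^{\infty}$: it places the $L^p$-norm on $g\,\mathds{1}_{g\le K}$, which is harmless since $\|g\,\mathds{1}_{g\le K}\|_{L^p_{k+\g}}^p\le K^{p-1}\|g\|_{L^1_{p(k+\g)}}$, and leaves the product $\|f\|_{L^p_{k+\g/p}}^{p-1}\,\|f\|_{L^1_{k+\g/p}}$. A Young inequality of the form $x^{p-1}y\le \tfrac{\tilde\varepsilon}{q}\,x^p+C_{\tilde\varepsilon,p}\,y^p$ then extracts exactly $2^{(3\g+7)/(2q)}\rho_q\,\binf\,\tfrac{\tilde\varepsilon}{q}\,\|f\|_{L^p_{k+\g/p}}^p$ and dumps the remainder — depending on $K$, $\tilde\varepsilon$, $\binf$ and on lower-order norms of $f$ and $g$ — into $B_{k,p}[f](\varepsilon,\tilde\varepsilon,K)$. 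Collecting the three contributions gives \eqref{Q+ ij bi-lin est}.

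I expect the genuinely delicate point to be the entropy step for the large piece: one must trade the pointwise smallness available on $\{g>K\}$ for a higher polynomial moment of $g$, at the price of the only slowly decaying gain $(\log K)^{-1/(k+\g+1)}$, while simultaneously accommodating the factor $I^{-\alpha}$ built into the definition \eqref{entropy H0} of $H[g]$ — which is exactly where the correction $\alpha\log I$ and, through Lemma~\ref{lemma H control}, the constraint $p\,\alpha>-1$ become unavoidable. By contrast, once Proposition~\ref{Prop: Lp weak form} is in hand the remainder is bookkeeping: the three small parameters $\varepsilon$, $\tilde\varepsilon$ and $K$ must be kept mutually independent so that precisely one small multiple of $\|f\|_{L^p_{k+\g/p}}^p$ survives outside $B_{k,p}[f]$, which is what will later permit its absorption against the coercive loss term of Proposition~\ref{Prop: coll freq}, although that step lies beyond the present statement.
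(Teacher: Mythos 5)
Your overall architecture matches the paper's: reduce to Proposition~\ref{Prop: Lp weak form} with $\chi=f^{p-1}$ (and the identity $\|f^{p-1}\|_{L^q_{(kp+\g)/q}}=\|f\|_{L^p_{k+\g/p}}^{p-1}$), split $b=b^1+b^\infty$, split $g$ into a large and a small piece, treat the large piece with the $L^1$-type estimate plus entropy and the small piece with the $L^\infty$-type estimate plus Young. The genuine gap is in \emph{where you place the cut-off}. The paper does not split on $\{g>K\}$ but on $\{\tilde g\,\la\cdot\ra^\ell\geq K\}$ with $\tilde g=g\,I^{-\alpha}$ and $\ell=k+\g+2\alpha+6$, and this choice is not cosmetic. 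First, on the complementary set one gets the \emph{pointwise} bound $g\leq K\la\cdot\ra^{-\ell}I^{\alpha}$, so $\|g\mathds{1}_{\tilde g\la\cdot\ra^\ell\leq K}\|^p_{L^p_{k+\g}}\leq K^p\,\|\la\cdot\ra^{-6-2\alpha}I^{\alpha}\|^p_{L^p}=K^p\hat c_p$, a pure numerical constant; this is exactly where the hypothesis $p\,\alpha>-1$ is used (integrability of $I^{\alpha p}$ near $I=0$), not in the admissibility of $f^{p-1}$ as a test function as you claim. Your bound $\|g\mathds{1}_{g\leq K}\|^p_{L^p_{k+\g}}\leq K^{p-1}\|g\|_{L^1_{p(k+\g)}}$ instead drags a moment of order $p(k+\g)$ into $B_{k,p}$, which is neither assumed in the downstream theorems (they only require $L^1_{k+\g+1}$ of $g$) nor compatible with \eqref{const B}, and it destroys the $p\to\infty$ limit $\hat c_p^{1/p}\to\hat c_\infty$ used in Theorem~\ref{Th prop Lp}(b).

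Second, on the large set the paper uses $1\leq\bigl(\log(\tilde g\la\cdot\ra^\ell)/\log K\bigr)^{1/s}$ and the elementary bound $|\log(\tilde g\la\cdot\ra^\ell)|\leq|\log\tilde g|+\tfrac{\ell}{2}\la\cdot\ra^2$, so the resulting constant $c[g]$ in \eqref{pomocna 12} depends only on $H[g]$ and $\|g\|_{L^1_2}$. Your version, splitting on $g$ itself, forces the correction $\log g=\log(gI^{-\alpha})+\alpha\log I$, and $\int g|\log I|$ is \emph{not} controlled by $H[g]$ and $\|g\|_{L^1_2}$ alone: as in \eqref{pomocna 1} it requires an $L^p$ (or $L^\infty$) norm of $g$. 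Since in the application $g=f$ and the coefficient multiplying $\|f\|^p_{L^p_{k+\g/p}}$ must be made small \emph{a priori} to absorb into the loss term, letting that coefficient depend on $\|f\|_{L^p}(t)$ undermines the comparison argument. So your proof establishes a weaker statement with different constants; to recover \eqref{Q+ ij bi-lin est} as stated you must build $I^{-\alpha}\la\cdot\ra^{\ell}$ into the threshold.
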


\Proof 
The gain term $	\mathcal{Q}^+_p[f, g]$ with $b^1$ is   estimated  using \eqref{Lp weak form}, 
\begin{equation}\label{Q+ est b1}
	\mathcal{Q}^+_p[f, g] \leq  2^{\frac{3\g+7}{2q}}  \rho_q \, \varepsilon \,\,     \|  	g    \|_{L^1_{k+\g}}.
\end{equation}
For $b^{\infty}$, the estimates require to exploit a  bi-linear structure   together with the entropy. 
For the convenience, we introduce the notation $\tilde{g}(t,v,I):= g(t,v,I) I^{-\alpha}$ and suitably split $	\mathcal{Q}^+_{p}[f, g]$  with $b^{\infty}$ to  apply \eqref{Lp weak form inf +},
\begin{multline}\label{pomocna 9}
	\mathcal{Q}^+_{p}[f, g] = 	\mathcal{Q}_{p}^+[f, g \mathds{1}_{\tilde{g}   \la \cdot \ra_j^\ell \geq K}] +  	\mathcal{Q}^+_{p}[f, g \mathds{1}_{\tilde{g}   \la \cdot \ra_j^\ell \leq K}]
	\\
	\leq \binf  \| f    \|^{p-1}_{L^p_{ k+{\g}/{p}}} \left(   \| f    \|_{L^p_{k+{\g}/{p}}}  \|  	g \mathds{1}_{\tilde{g}   \la \cdot \ra^\ell \geq K}   \|_{L^1_{k+\g}} + \|  	f    \|_{L^1_{k+{\g}/{p}}}	\| g \mathds{1}_{\tilde{g}   \la \cdot \ra^\ell \leq K}    \|_{L^p_{ k+\g}} \right),
\end{multline}
for some $K>1$.
A term involving small domain for $\tilde{g}$ becomes a constant,
\begin{multline}\label{ell}
	\| g \mathds{1}_{\tilde{g}   \la \cdot \ra^\ell \geq K}    \|^p_{L^p_{ k+\g}}  \leq K^p   \|   \la v,I \ra^{k+\g-\ell}   {  I^{\alpha}}   \|^p_{L^p }   = K^p	 \|   \la v, I \ra^{-6 {  - 2\alpha}}   { I^{\alpha}}  \|_{L^p }^p    \\ =: K^p \hat{c}_p,\quad \text{for} \ \ell = k + \g +2 \alpha +6 \ \text{and} \ p \, \alpha  >-1.
\end{multline} 
Then, by Young's inequality, for some $\tilde{\varepsilon}>0$, the second term of \eqref{pomocna 9} becomes
\begin{equation*}
	\| f    \|^{p-1}_{L^p_{ k+{\g}/{p}}}  \|  	f    \|_{L^1_{k+{\g}/{p}}}  K \ \hat{c}_p^{1/p} \leq \frac{ K^p \, \hat{c}_p }{p\, \tilde{\varepsilon}^{p-1}} \, \|  	f    \|^p_{L^1_{k+{\g}/{p}}} + \frac{\tilde{\varepsilon}}{q} 	\| f  \|_{L^{p}_{k+ {\g}/{p}}}^{p}.
\end{equation*}
The term with large domain for $\tilde{g}$ requires the use of entropy to make constant involving $K$  small. Since the domain implies $1\leq \Big( \tfrac{\log(\tilde{g}  \la \cdot \ra_j^\ell) }{\log K}\Big)^{1/s}$, $s=k+\g+1$, H\"older's inequality for the pair $(s,s'=\tfrac{k+\g+1}{k+\g})$ yields
\begin{equation*}
	\|  	g \mathds{1}_{\tilde{g}   \la \cdot \ra^\ell \geq K}   \|_{L^1_{k+\g}}  
	\leq \frac{1}{(\log K)^{1/s}}	\| 	g  \|_{L^1_{k+\g+1 }}^{1/s'}  	\| g	\, \log(\tilde{g}  \la \cdot \ra^\ell)    \|_{L^1}^{1/s} = \frac{ 	\| 	g  \|_{L^1_{k+\g+1 }}^{1/s'}   c[g]   }{(\log K)^{1/s}},
\end{equation*}
by elementary inequality $ \left| \log(\tilde{g }\la \cdot \ra^\ell) \right|  \leq  \left| \log \tilde{g} \right| + \frac{\ell}{2}   \log \la \cdot \ra^2 \leq   \left|  \log \tilde{g} \right| + \frac{\ell}{2}     \la \cdot \ra^2$ and with notation
\begin{equation}\label{pomocna 12}
	c[g] =  	  \left( H[g] + \tfrac{\ell}{2}	\| 	g    \|_{L^1_{2}} \right)^{1/s}, \quad \text{with} \ \ell \ \text{from  \eqref{ell}}.
\end{equation}
Finally, \eqref{pomocna 9} becomes
\begin{equation*}
	\mathcal{Q}^+_{p}[f, g] 
	\leq \binf  \Bigg(   \frac{	\| 	g  \|_{L^1_{k+\g+1 }}^{1/s'}   c[g]}{(\log K)^{1/s}}  +  \frac{\tilde{\varepsilon}}{q}    \Bigg)  \| f    \|^{p}_{L^p_{ k+{\g}/{p}}} +B_{k,p}[f](\varepsilon,\tilde{\varepsilon},K),  
\end{equation*}
which, together with \eqref{Q+ est b1}, implies  the statement \eqref{Q+ ij bi-lin est} with 
\begin{equation}\label{const B}
	B_{k,p}[f](\varepsilon,\tilde{\varepsilon},K) = \binf \frac{ K^p \, \hat{c}_p }{p\, \tilde{\varepsilon}^{p-1}} \, \|  	f    \|^p_{L^1_{k+{\g}/{p}}}, \quad  \hat{c}_p  =	 \|   \la v, I \ra^{-6 {  - 2\alpha}}   { I^{\alpha}}  \|_{L^p }^p,
\end{equation}
where $\varepsilon$-dependence comes from decomposition \eqref{bij L1 decomp}.
\EndProof
The final goal is to absorb the Dirichlet gain term   into the loss one, by
gathering \eqref{loss bound} and \eqref{Q+ ij bi-lin est} and appropriately choosing $\varepsilon$, $\tilde{\varepsilon}$ and $K$. Namely, knowing  $\|g\|_{L^1_{k+\zeta+1}}$,  first $\varepsilon$ is chosen, then  $\| b^{\infty} \|_{L^\infty} $ according to \eqref{bij L1 decomp}, which fixes $\tilde{\varepsilon}_{k,q}$ and $K_{k,q}$,
\begin{equation}\label{K}
	\begin{split}
		&\varepsilon_{k,q}(x)  =  \frac{ \kappa^{lb}  \, c^{lb}[g]  }{  2^{\frac{3\g+7}{2q}+2}  \rho_q \,     x}, \quad
		\tilde{\varepsilon}_{k,q}(x)	=   \frac{ \kappa^{lb}  \, c^{lb}[g] \, q }{ 2^{\frac{3\g+7}{2q}+3}  \rho_q \,  \binf   },  \\
		&K_{k,q}(x)	= \exp\left\{ x^{k+\zeta}   \left(  \frac{ q\, c[g]  }{ \tilde{\varepsilon}_{k,q}[g] } \right)^{ k + \g +1}  \right\}, \quad \text{for any } \quad x \geq \|  	g    \|_{L^1_{k+\g+1}}.
	\end{split}
\end{equation}
This ensures the final estimate on $L^p$-Dirichlet form $\mathcal{Q}_{p}[f, g]$ defined in \eqref{Diri}. 
\begin{proposition}[Estimate on $L^p$-Dirichlet form, Proposition 8.2 in \cite{Alonso-Colic}]\label{Prop bi-linear form} 
	With assumptions of Proposition \ref{Prop bi-linear form +}, Dirichlet form \eqref{Diri} corresponding to  the collision  operator $Q(f,g)$ is estimated as
	\begin{equation}\label{bi-linear final}
		\mathcal{Q}_p[f, g]  \leq 
		-   \frac{A[g]}{2}    \| f    \|^p_{L^p_{k+ {\g}/{p}}}
		+     B[f,g](x),  \quad \text{for any } \ x \geq \|  	g    \|_{L^1_{k+\g+1}}, 
	\end{equation}
	with the constant $A[g]=  \kappa^{lb}    \ c^{lb}[g]$ given in  \eqref{loss bound}, while the constant $B[f,g]$  is  the one in  \eqref{const B} from Proposition \ref{Prop bi-linear form +}  for the choice \eqref{K}, i.e.
	\begin{equation}\label{full B}
		B[f,g]\x = B_{k,p}[f](\varepsilon_{k,q}(x),\tilde{\varepsilon}_{k,q}(x), K_{k,q}(x)),  \quad x \geq \|  	g    \|_{L^1_{k+\g+1}}, 
	\end{equation}
	which displays the explicit dependency on $g$ through $L^1$-moments and $H[g]$.
\end{proposition}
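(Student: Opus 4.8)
The plan is to combine the coercivity estimate for the loss part (Proposition \ref{Prop: coll freq}) with the gain estimate (Proposition \ref{Prop bi-linear form +}) and then to \emph{absorb} the gain contribution that is proportional to $\| f\|^p_{L^p_{k+\g/p}}$ into the negative loss term by tuning the free parameters $\varepsilon,\tilde\varepsilon,K$. Concretely, I would start from $\mathcal{Q}_p[f,g]=\mathcal{Q}^+_p[f,g]-\mathcal{Q}^-_p[f,g]$, insert the lower bound \eqref{loss bound} for $\mathcal{Q}^-_p[f,g]$ and the upper bound \eqref{Q+ ij bi-lin est} for $\mathcal{Q}^+_p[f,g]$, and write
\begin{equation*}
	\mathcal{Q}_p[f,g] \leq -\bigl( A[g] - \mathcal{I}(\varepsilon,\tilde\varepsilon,K) \bigr)\, \| f\|^p_{L^p_{k+\g/p}} + B_{k,p}[f](\varepsilon,\tilde\varepsilon,K),
\end{equation*}
where $A[g]=\kappa^{lb}c^{lb}[g]$ as in \eqref{loss bound} and $\mathcal{I}(\varepsilon,\tilde\varepsilon,K)$ denotes the coefficient of $\| f\|^p_{L^p_{k+\g/p}}$ on the right-hand side of \eqref{Q+ ij bi-lin est}, a sum of three nonnegative terms carrying, respectively, the factors $\varepsilon$, $(\log K)^{-1/(k+\g+1)}$ and $\tilde\varepsilon$. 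The whole point is to choose $\varepsilon,\tilde\varepsilon,K$ so that $\mathcal{I}(\varepsilon,\tilde\varepsilon,K)\leq A[g]/2$; then $-(A[g]-\mathcal{I})\leq -A[g]/2$ and \eqref{bi-linear final} follows.

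To get there I would allocate the budget $A[g]/2$ as $A[g]/4+A[g]/8+A[g]/8$, one slice per term of $\mathcal{I}$, letting $\varepsilon,\tilde\varepsilon,K$ depend on a threshold parameter $x\geq\|g\|_{L^1_{k+\g+1}}$. The choice $\varepsilon=\varepsilon_{k,q}(x)$ from \eqref{K} makes the first term at most $2^{\frac{3\g+7}{2q}}\rho_q\,\varepsilon_{k,q}(x)\,\|g\|_{L^1_{k+\g}}\leq 2^{\frac{3\g+7}{2q}}\rho_q\,\varepsilon_{k,q}(x)\,x=A[g]/4$, using the monotonicity of moments $\|g\|_{L^1_{k+\g}}\leq\|g\|_{L^1_{k+\g+1}}\leq x$. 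The choice $\tilde\varepsilon=\tilde\varepsilon_{k,q}(x)$ from \eqref{K} makes the third term equal to $2^{\frac{3\g+7}{2q}}\rho_q\,\binf\,\tilde\varepsilon_{k,q}(x)/q=A[g]/8$. For the middle (entropic) term, controlling it by $A[g]/8$ amounts to $(\log K)^{1/(k+\g+1)}\geq 8\cdot 2^{\frac{3\g+7}{2q}}\rho_q\,\binf\,c[g]\,\|g\|_{L^1_{k+\g+1}}^{(k+\zeta)/(k+\zeta+1)}/A[g]$, and one checks that $K=K_{k,q}(x)$ from \eqref{K}, for which $\log K_{k,q}(x)=x^{k+\zeta}\bigl(q\,c[g]/\tilde\varepsilon_{k,q}(x)\bigr)^{k+\g+1}$ while $q/\tilde\varepsilon_{k,q}(x)=8\cdot 2^{\frac{3\g+7}{2q}}\rho_q\,\binf/A[g]$, satisfies this for every $x\geq\|g\|_{L^1_{k+\g+1}}$ because $t\mapsto t^{k+\zeta}$ is nondecreasing on $[0,\infty)$. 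Adding the three slices gives $\mathcal{I}\leq A[g]/2$, hence \eqref{bi-linear final} with $B[f,g](x)=B_{k,p}[f]\bigl(\varepsilon_{k,q}(x),\tilde\varepsilon_{k,q}(x),K_{k,q}(x)\bigr)$; its explicit dependence on $g$ through $L^1$-moments and $H[g]$ is then read off from \eqref{const B}, \eqref{pomocna 12} and the definition of $c^{lb}[g]$ in Lemma \ref{Lemma Low b}. The case $p=\infty$ would be recovered afterwards by tracking the $p$-dependence of every constant and letting $p\to\infty$, as announced after \eqref{ODE}.

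The only genuinely delicate point is this last verification: one must match the explicit exponential $K_{k,q}(x)$ in \eqref{K} against the smallness requirement on the entropic term \emph{uniformly} in the threshold $x\geq\|g\|_{L^1_{k+\g+1}}$, which is exactly what forces the powers $k+\zeta$ and $k+\g+1$ in \eqref{K}; the exponent $k+\g+1$ is the conjugate index of $(k+\g+1)/(k+\g)$ used inside the proof of Proposition \ref{Prop bi-linear form +} to convert the large-domain $L^1_{k+\g}$-mass of $g$ into $\|g\|_{L^1_{k+\g+1}}$ times the entropic factor $c[g]$. Everything else is bookkeeping: one only needs the hypotheses inherited from Proposition \ref{Prop bi-linear form +} (finiteness of $\rho_q$, $p\in[1,\infty)$, $p\alpha>-1$), and one should note that $B[f,g](x)$ stays finite for the chosen parameters since, by \eqref{const B}, it is a finite product of factors depending on $\|f\|_{L^1_{k+\g/p}}$, on $H[g]$ and on $c^{lb}[g]$, all finite under the standing hypotheses (cf.\ Lemma \ref{lemma H control}).
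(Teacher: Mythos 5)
Your proposal is correct and follows exactly the absorption strategy the paper intends: combine the coercivity bound \eqref{loss bound} with the gain estimate \eqref{Q+ ij bi-lin est} and verify that the choices \eqref{K} split the budget $A[g]/2$ as $A[g]/4+A[g]/8+A[g]/8$ across the three gain contributions, with the monotonicity $\|g\|_{L^1_{k+\g}}\leq\|g\|_{L^1_{k+\g+1}}\leq x$ closing the first slice and the matching of exponents $k+\zeta$ and $k+\g+1$ closing the entropic one uniformly in $x$. Your arithmetic checks out against the constants in \eqref{K} (the factors $2^{+2}$ and $2^{+3}$ there are precisely the $4$ and $8$ of your allocation), so nothing further is needed.
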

Connecting the previous Proposition \ref{Prop bi-linear form}  to the solution of the Boltzmann equation in the manner of \eqref{ODE} allows to obtain the following result on propagation of $L^p_{k}$-norms for the solution. 
\begin{theorem}[$L^p$-theory for the Boltzmann equation, Theorem 9.1 in \cite{Alonso-Colic}]\label{Th prop Lp} Let $f$ be a solution of the Boltzmann   equation    with initial data 
	\begin{equation*}
		\| f_0    \|_{L^p_{k}}< \infty  \qquad \text{and} \qquad \| f_0    \|_{L^1_{\max\{2,k+ \g +1\}}}< \infty, \quad k \geq 0, \quad   0< \g \leq 2.
	\end{equation*}
	Then for any time $t\geq 0$, the following result holds:
	\begin{align}
		&\text{(a)} \quad   \text{ for } \ 		p \in (1,\infty) \ \text{such that } \ p \, \alpha > -1 \ \text{and, for the conjugate } \nonumber\\ & \quad \quad  \ \, \text{index } q,  \ \rho_q \ \text{given in \eqref{cond sing r} is finite}, \nonumber\\
		&\label{Lp propagation}  \qquad \qquad \qquad	\| f    \|^p_{L^p_{k}}(t)  \leq \max\left\{  \| f_0    \|^p_{L^p_{k}},   \constLpprop \right\}, \\
		&\text{(b)} \quad  \text {when $p=\infty$, if $\rho_1<\infty$ and $\alpha\geq0$}, \nonumber \\ 
		&  \qquad \qquad \qquad	\| f   \|_{L^\infty_{k}}(t)  \leq \max\left\{  \| f_0    \|_{L^\infty_{k}},  \constLinfprop \right\}, \label{Lp propagation inf}
	\end{align}	
	with the constants $\constLpprop$ and $\constLinfprop$  explicitly given in the proof. 
\end{theorem}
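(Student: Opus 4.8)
The plan is to turn the differential identity \eqref{ODE} into a closed scalar differential inequality for $y(t)=\|f\|^p_{L^p_k}(t)$ and then invoke the comparison principle for ODEs, exactly in the spirit of the $L^1$-theory. Setting $g=f$ in \eqref{ODE} and feeding in the Dirichlet-form bound \eqref{bi-linear final} of Proposition \ref{Prop bi-linear form}, one gets, for every $t\geq0$ (writing $f$ for $f(t)$) and every $x\geq\|f\|_{L^1_{k+\g+1}}$,
\[
\tfrac1p\,\partial_t\|f\|^p_{L^p_k}\;\leq\;-\tfrac{A[f]}{2}\,\|f\|^p_{L^p_{k+\g/p}}+B[f,f]\x.
\]
Since $\la v,I\ra\geq1$ forces $\|f\|_{L^p_{k+\g/p}}\geq\|f\|_{L^p_k}$, the negative term dominates $-\tfrac{A[f]}{2}\|f\|^p_{L^p_k}$. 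Hence, provided $A[f]$ can be bounded below and $B[f,f]\x$ above \emph{uniformly in $t\geq0$}, $y$ obeys $\tfrac1p y'\leq-\tfrac{\underline A}{2}y+\overline B$, whose linear comparison ODE yields $y(t)\leq\max\{y(0),2\overline B/\underline A\}$; this is \eqref{Lp propagation} with $\constLpprop=2\overline B/\underline A$. To make this rigorous I would first truncate the weight $\la v,I\ra^{kp}$ (or $f$ itself) so that $t\mapsto\|f\|^p_{L^p_k}$ is differentiable and finite, run the argument, and pass to the monotone limit.

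The heart of the matter is producing those time-uniform bounds, and the key point is that none of them feeds back on $\|f\|_{L^p_k}$, so there is no circularity. Conservation of mass and energy keep $\|f\|_{L^1}$ and $\|f\|_{L^1_2}$ constant, hence bound $\|f\|_{L^1_\g}$ since $\g\leq2$; the hypothesis $\|f_0\|_{L^1_{\max\{2,k+\g+1\}}}<\infty$ together with the $L^1$-propagation estimates \eqref{poly prop large k}--\eqref{poly prop small k} of Theorem \ref{Th L1} (or bare moment monotonicity when $k+\g+1\leq2$) gives $x_0:=\sup_{t\geq0}\|f\|_{L^1_{k+\g+1}}(t)<\infty$ and, a fortiori, a uniform bound on $\|f\|_{L^1_{k+\g/p}}(t)$; finally, since $k\geq0$ makes $f_0\in L^1_2\cap L^p$, Lemma \ref{lemma H control} gives $\sup_{t\geq0}H[f](t)<\infty$. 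Substituting these into the explicit expressions for $c^{lb}[f]$ (Lemma \ref{Lemma Low b}), for $c[f]$ in \eqref{pomocna 12}, and into the parameter choices $\varepsilon_{k,q},\tilde\varepsilon_{k,q},K_{k,q}$ of \eqref{K} evaluated at $x=x_0$, I would read off $A[f]=\kappa^{lb}c^{lb}[f]\geq\underline A>0$ from \eqref{loss bound} and $B[f,f](x_0)\leq\overline B<\infty$ from \eqref{const B}--\eqref{full B}, with $\underline A$ and $\overline B$ explicit in terms of $\|f_0\|_{L^1_2}$, $\|f_0\|_{L^p}$, $x_0$ and the kernel constants. This closes case (a), with $\constLpprop=2\overline B/\underline A$.

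For case (b), $p=\infty$, the Dirichlet-gain machinery of Proposition \ref{Prop bi-linear form +} is only available for finite $p$, so I would derive the bound by letting $p\to\infty$, equivalently the conjugate index $q\to1$, in case (a). The assumptions $\rho_1<\infty$ and $\alpha\geq0$ are precisely what make this limit admissible: $\rho_q$ is nonincreasing in $q$, so $\rho_q\leq\rho_1<\infty$ and case (a) applies for every $p\in(1,\infty)$; the prefactors $2^{(3\g+7)/(2q)}$ stabilise; the constraint $p\alpha>-1$ persists for $\alpha\geq0$; and $\hat c_p^{1/p}=\|\la v,I\ra^{-6-2\alpha}I^\alpha\|_{L^p}\to\|\la v,I\ra^{-6-2\alpha}I^\alpha\|_{L^\infty}$, the limit being finite exactly because $\alpha\geq0$. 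Taking $p$-th roots in $\|f\|_{L^p_k}(t)\leq\max\{\|f_0\|_{L^p_k},\constLpprop^{1/p}\}$ and sending $p\to\infty$, using $\|f\|_{L^p_k}\to\|f\|_{L^\infty_k}$ and $\|f_0\|_{L^p_k}\to\|f_0\|_{L^\infty_k}$ (legitimate because the finite-$p$ norms are uniformly controlled by case (a)), gives \eqref{Lp propagation inf} with $\constLinfprop=\lim_{p\to\infty}\constLpprop^{1/p}$ computed from the limiting constants.

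The step I expect to be the main obstacle is exactly this $p=\infty$ passage: one must carry the explicit $q$-dependence through $\rho_q$, $\hat c_p$ and --- most delicately --- the iterated exponential $K_{k,q}$ of \eqref{K} (itself built from the $q$-dependent $\varepsilon_{k,q}$ and $\tilde\varepsilon_{k,q}$, hence from the $\varepsilon$-dependent $\|b^\infty\|_{L^\infty}$), verify that each stays bounded as $q\downarrow1$, and justify $\lim_{p\to\infty}\|f\|_{L^p_k}=\|f\|_{L^\infty_k}$. By contrast, everything at finite $p$ is routine once the time-uniform control of mass, energy, the moment $\|f\|_{L^1_{k+\g+1}}$ and the entropy $H[f]$ is in place.
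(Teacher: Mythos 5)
Your proposal is correct and follows essentially the same route as the paper: feed the Dirichlet-form estimate of Proposition \ref{Prop bi-linear form} into \eqref{ODE}, fix $x$ uniformly in time via the $L^1$-propagation estimates of Theorem \ref{Th L1} (together with conservation of mass/energy and the entropy control of Lemma \ref{lemma H control}), integrate the resulting autonomous differential inequality, and obtain the $p=\infty$ case by tracking the $q$-dependence of all constants and letting $p\to\infty$ under the hypotheses $\rho_1<\infty$ and $\alpha\geq0$. The only differences are cosmetic (your constant $2\overline B/\underline A$ versus the paper's normalization $B[f,f](x)/A[f]$, and your more explicit discussion of the truncation needed to justify differentiating $\|f\|^p_{L^p_k}$).
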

\Proof
For $p\in (1,\infty)$, Proposition \ref{Prop bi-linear form} is applied to the right-hand side of \eqref{ODE} and direct integration implies the statement \eqref{Lp propagation},  with the constant, respectively to \eqref{full B},
\begin{equation}\label{x prop}
	\constLpprop = \frac{B[f,f](x)}{A[f]},  
\end{equation}
for the following choice of $x$, for any $k\geq0$,
\begin{equation}\label{x Lpk prop}
	\begin{split}
		& \text{if}  \quad k+\g > 1 \ \text{then} \  x:=  \max \left\{ 	\constprophigh_{k+\g+1},  \constproplow_{k+\g+1}, e \,\| f_0 \|_{L^1_{k+\g+1}} \right\}  \geq  \| f \|_{L^1_{k+\zeta+1}}, \\
		& \text{otherwise if}  \ \  k+\g \leq 1   \ \text{then} \  x :=  \| f_0 \|_{L^1_{2}} =  \| f \|_{L^1_{2}} \geq  \| f \|_{L^1_{k+\zeta+1}},\\
	\end{split}
\end{equation}
by the $L^1$-propagation  result stated in Theorem \ref{Th L1}.

The case $p=\infty$ follows by taking the appropriate limit of constants as $p\rightarrow \infty$. 
Namely,  \eqref{Lp propagation} implies 
\begin{equation*}
	\| f    \|_{L^p_{k}}(t)  \leq \max\Big\{  \| f_0    \|_{L^p_{k}}, \constLpprop^{1/p} \Big\}, \quad \text{for} \ t\geq 0.
\end{equation*}
By hypothesis $f_0\in L^{1}_{k}\cap L^\infty_{k}$, it follows that
\begin{equation*}
	\lim_{p\rightarrow\infty} \| f_0    \|_{L^p_{k}} = \| f_0 \|_{ L^{\infty}_{k} }.
\end{equation*}
Then, since the constant $A$ does not depend on $p$, $\lim_{p\rightarrow\infty} A^{1/p}= 1$, recalling the form of constant $B[f,f]$ from \eqref{const B}--\eqref{K},
\begin{equation*}
	\left( B[f,f](x)\right)^{1/p} = \left(\binf\right)^{1/p} \frac{ K_{k,q}(x) \, \hat{c}_p^{1/p} }{ p^{1/p} \, \tilde{\varepsilon}_{k,q}(x)^{1/q}} \, \|  	f    \|_{L^1_{k+{\g}/{p}}}, 
\end{equation*}
and denoting
\begin{equation*}
	\hat{c}_{\infty} = \lim_{p\rightarrow\infty}    \hat{c}_p^{1/p}   =  \|   \la \cdot \ra^{-6 { - 2\alpha}}  \, { I^{\alpha}}  \|_{L^\infty}, \quad  \alpha \geq 0, 
\end{equation*}
it follows, for any $x$ independent on $p$, 
\begin{equation}\label{beta inf}
	\lim_{p\rightarrow\infty}  \constLpprop(x)^{1/p}   =  \frac{ K_{k, 1}(x) \,	\hat{c}_{\infty} }{ \tilde{\varepsilon}_{k,1}(x) } \, \|  	f    \|_{L^1_{k}} =: \constLinfprop(x).
\end{equation}
In particular, taking $x$  as in \eqref{x prop}  completes the proof. 
\EndProof
The theory of moments in $L^1$ developed in the Section \ref{Sec: L1}  required finiteness of naturally propagated quantities, mass and energy, to prove   both generation  and propagation of tails in $L^1$, i.e. higher order moments $L^1_k$,  $k>2$. For moments in $L^p$, the situation is different, since there is no   conservation in this space. However, the propagation property of $L^p_0$-norms, Theorem \ref{Th prop Lp} for $k=0$, and a robust $L^1$-theory allow to prove generation of tails in $L^p$, in the sense of the upcoming theorem. This result is a new contribution for polyatomic gases, inspired by the monatomic case  \cite[Section 4.1]{Mou-Vil}. 
\begin{theorem}[$L^p$-theory for the Boltzmann equation, generation of $L^p$-tails]\label{Th gen Lp}
	Let $f$ be a solution of the Boltzmann   equation  initially satisfying
	\begin{equation}\label{hypo gen}
		\| f_0    \|_{L^p}< \infty   \quad \text{and} \quad 	  \|f_0\|_{L^1_{\max\{2, \zeta+1\}}}  < \infty. \quad \zeta \in (0,2],
	\end{equation}
	Under the same conditions of Theorem \ref{Th prop Lp} on the range for $p$, respectively for (a) and (b), the following statements hold. \\
	\noindent 1. For any $k>\max\{0, 1-\zeta\}$ and $t_0>0$ it follows 
	\begin{align}
		&\text{(a)} \quad   \text{when} \ 		p \in (1,\infty), \quad	
		\| f   \|^p_{L^p_{k}}(t)  \leq \constE + \consta  (t-t_0)^{-\frac{ k p}{\zeta}}, \quad t>t_0,
		\label{Lp gen stat} \\
		&\text{(b)} \quad    \text{when} \ 	p=\infty, \quad
		\| f   \|_{L^\infty_{k}}(t)  \leq  \constEinf  + \constainf  (t-t_0)^{-\frac{ k }{\zeta}},   \quad t>t_0.
		\label{Linf gen stat}
	\end{align}	
	2.  Moreover, if in addition  to \eqref{hypo gen} the following is  satisfied
	\begin{equation}\label{hypo prop}
		\| f_0    \|_{L^1_{k+\g +1}}< \infty, \quad k \geq 0,
	\end{equation}
	then for any $t>0$,
	\begin{align}
		&\text{(a)} \quad   \text{when} \ 		p \in (1,\infty), \quad	
		\| f   \|^p_{L^p_{k}}(t)  \leq \constEprop + \constaprop \, t^{-\frac{ k p}{\zeta}}, 
		\label{Lp gen prop stat} \\
		&\text{(b)}  \quad    \text{when} \ 	p=\infty, \quad
		\| f   \|_{L^\infty_{k}}(t)  \leq  \constEinfprop  + \constainfprop  t^{-\frac{ k }{\zeta}}.
		\label{Linf gen prop stat}
	\end{align}	
	All the involved constants are explicitly given in the proof. 
\end{theorem}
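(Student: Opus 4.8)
\noindent The plan is to run, for finite $p$, the differential inequality that comes from \eqref{ODE} together with the Dirichlet estimate \eqref{bi-linear final} of Proposition \ref{Prop bi-linear form}, which for $f=g$ reads
\begin{equation*}
	\tfrac1p\,\partial_t\|f\|^p_{L^p_k}\;\leq\;-\tfrac{A[f]}{2}\,\|f\|^p_{L^p_{k+\zeta/p}}+B[f,f](x),\qquad x\geq\|f\|_{L^1_{k+\zeta+1}},
\end{equation*}
and then to turn it into a scalar ODE inequality for $y(t):=\|f\|^p_{L^p_k}(t)$ of the type that forces instantaneous regularization. First I would record that \eqref{hypo gen} gives $f_0\in L^1_2\cap L^p$, so Lemma \ref{lemma H control} bounds $H[f]$ uniformly in time and, through Lemma \ref{Lemma Low b} and \eqref{loss bound}, produces a time-independent lower bound $A[f]\geq 2c_*>0$; moreover $\|f\|_{L^1_2}=\|f_0\|_{L^1_2}$. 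The only remaining input is then a \emph{time-independent} admissible value of $x$, which is precisely where the $L^1$-theory of Theorem \ref{Th L1} will enter.

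Next I would close the inequality by interpolating the weight. Writing $d\mu=|f|^p\,\md v\,\md I$ and applying H\"older at exponent $\theta=\tfrac{kp}{kp+\zeta}\in(0,1)$ (this is where $k>0$ is used),
\begin{equation*}
	\|f\|^p_{L^p_k}=\int\la v,I\ra^{kp}\,d\mu\;\leq\;\Big(\int\la v,I\ra^{kp+\zeta}\,d\mu\Big)^{\theta}\Big(\int d\mu\Big)^{1-\theta}=\big(\|f\|^p_{L^p_{k+\zeta/p}}\big)^{\theta}\big(\|f\|^p_{L^p_0}\big)^{1-\theta}.
\end{equation*}
Since $\|f_0\|_{L^p}<\infty$, Theorem \ref{Th prop Lp} with $k=0$ gives a time-independent bound $\|f\|^p_{L^p_0}(t)\leq M_p$, hence $\|f\|^p_{L^p_{k+\zeta/p}}\geq M_p^{-\zeta/(kp)}\,y^{\,1+\zeta/(kp)}$, and the differential inequality becomes
\begin{equation*}
	y'(t)\;\leq\;-K\,y(t)^{1+\nu}+L,\qquad \nu:=\tfrac{\zeta}{kp},\quad K:=p\,c_*\,M_p^{-\nu},\quad L:=p\,B[f,f](x).
\end{equation*}
A standard ODE comparison (in the spirit of \cite[Section 4.1]{Mou-Vil}) then bounds $y$ \emph{regardless of its value at the left endpoint} — in particular for $y=+\infty$ — by $y(t)\leq (L/K)^{1/(1+\nu)}+(K\nu\,(t-t_0))^{-1/\nu}$; as $1/\nu=kp/\zeta$ this is exactly the $(t-t_0)^{-kp/\zeta}$ profile claimed in \eqref{Lp gen stat} and \eqref{Lp gen prop stat}, and it identifies the constants. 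To make the use of \eqref{ODE} legitimate when $\|f_0\|_{L^p_k}=\infty$, I would run everything with $\la v,I\ra^{kp}$ replaced by $\min\{\la v,I\ra^{kp},N\}$, with all constants uniform in $N$, and pass to the limit $N\to\infty$ by monotone convergence.

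It then remains to supply the time-independent $x$, and this is the step that separates the two parts. For Part 2, hypothesis \eqref{hypo prop} and the propagation item of Theorem \ref{Th L1} bound $\|f\|_{L^1_{k+\zeta+1}}$ uniformly in time, so one takes $x$ as in \eqref{x Lpk prop} and runs the ODE on all of $(0,\infty)$, obtaining \eqref{Lp gen prop stat}. For Part 1 only \eqref{hypo gen} is assumed, so $\|f_0\|_{L^1_{k+\zeta+1}}$ may be infinite; but the assumption $k>\max\{0,1-\zeta\}$ is precisely $k+\zeta+1>2$, so the generation item of Theorem \ref{Th L1} gives $\|f\|_{L^1_{k+\zeta+1}}(t_0)<\infty$ for every $t_0>0$, and the propagation item keeps this moment bounded on $[t_0,\infty)$; one then fixes $x$ accordingly and runs the ODE on $(t_0,\infty)$, which accounts for the shift $t\mapsto t-t_0$ and the $t_0$-dependence of $\constE$. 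Finally, the cases $p=\infty$ in \eqref{Linf gen stat} and \eqref{Linf gen prop stat} I would obtain exactly as in the proof of Theorem \ref{Th prop Lp}: take $p$-th roots, use $(a+b)^{1/p}\leq a^{1/p}+b^{1/p}$ to separate the two terms, and send $p\to\infty$, so that $\|f\|_{L^p_k}\to\|f\|_{L^\infty_k}$ and the constants $\constE^{1/p}$, $\consta^{1/p}$ converge to their $p=\infty$ counterparts (using $\rho_1<\infty$ and $\alpha\geq0$).

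The hard part is not the ODE mechanism, which is classical, but controlling the inhomogeneous term $B[f,f](x)$: it is a genuine constant only once a \emph{time-independent} $x\geq\|f\|_{L^1_{k+\zeta+1}}$ is in hand. Under \eqref{hypo prop} this is immediate from $L^1$-propagation, whereas under the weaker assumption \eqref{hypo gen} it becomes available only after a positive time, through $L^1$-moment \emph{generation} — which is exactly why Part 1 is stated with the shift $(t-t_0)$ while Part 2 holds for every $t>0$.
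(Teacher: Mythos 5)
Your proposal is correct and follows essentially the same route as the paper's proof: a time-independent $L^p_0$ bound from the propagation theorem, weight interpolation to produce the superlinear ODI $y'\leq -Ky^{1+\zeta/(kp)}+L$, ODE comparison for the $(t-t_0)^{-kp/\zeta}$ profile, the choice of a time-independent $x\geq\|f\|_{L^1_{k+\zeta+1}}$ via $L^1$-generation (Part 1, using $k+\zeta+1>2$) or $L^1$-propagation (Part 2), and the limit $p\to\infty$ for the $L^\infty$ statements; your computed constants agree with \eqref{Ek}-style expressions in the paper. The truncation $\min\{\la v,I\ra^{kp},N\}$ you add to justify \eqref{ODE} when $\|f_0\|_{L^p_k}=\infty$ is a reasonable technical supplement that the paper leaves implicit.
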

Note that in the case when $0\leq k \leq 1-\zeta$, making sense   when $0<\zeta\leq 1$,  the condition \eqref{hypo prop} is not an \emph{additional} one, it is actually contained in \eqref{hypo gen} by the monotonicity of norms and reduces to the mass and energy conservation.

Let us highlight that the constants $\constE$ and $\constEinf$  from Part 1 depend on the time $t_0>0$ and blow up exponentially as $t_0$ vanishes. The classical algebraic blow up at $t=0$ can be recovered under the additional assumption \eqref{hypo prop}.

\begin{proof} First consider $p\in(1,\infty)$. The propagation result of Theorems \ref{Th L1}  and \ref{Th prop Lp} implies that $f \in L^p \cap  L^1_{\g+1}$   since initially so, and
	\begin{equation*}
		\| f    \|^p_{L^p}(t)  \leq \max\left\{  \| f_0    \|^p_{L^p},  \beta_0 \right\} =: \constLpO, \quad \text{for all} \ t\geq0,
	\end{equation*}	
	where $\beta_0$ is the constant from   \eqref{x prop}--\eqref{x Lpk prop} evaluated at $k=0$ implying that  the constant $C_0$ is independent of time $t$. 
	
	The next step is to invoke the moment interpolation formula, 
	\begin{equation*}
		\| f \|_{L^p_k} \leq \| f \|^{\theta}_{L^p_{k+\zeta/p}} \| f \|_{L^p}^{1-\theta}, 
		\quad \theta = \frac{k}{k+\zeta/p},
	\end{equation*}
	which implies  
	\begin{equation*}
		\| f \|^p_{L^p_{k+\zeta/p}} \geq 	\| f \|_{L^p_k}^{{p}/{\theta}}\| f \|_{L^p}^{- (1-\theta)p/\theta} \geq 	\| f \|_{L^p_k}^{{p}/{\theta}} \constLpO^{- (1-\theta)/\theta}.
	\end{equation*}
	Then, estimates \eqref{bi-linear final} and \eqref{ODE} yield an ODI, 
	\begin{equation}\label{ODE 2}
		\tfrac{1}{p} \  \partial_t \| f   \|^p_{L^p_{k}}  \leq 	-   \frac{A[f]}{2 \constLpO^{(1-\theta)/\theta}}    \| f    \|^{p/\theta}_{L^p_k}
		+     B[f,f]\x,
	\end{equation}
	where  $B[f,f]\x$ as given in \eqref{full B} depends on time through $x \geq \|  	f   \|_{L^1_{k+\g+1}}(t)$. In the sequel,  we   deal with this time dependency  by appropriately choosing $x$ and using the monotonic increase of $B[f,f]\x$  with respect to $x$. The choice of $x$ is based on the $L^1$-theory given in Theorem \ref{Th L1} and exploits  either the $L^1$-generation result on  time interval $t>t_0>0$ or the $L^1$- propagation result on the whole time interval $t> 0$. 
	
	In the  part 1.(a)   of the theorem, assumption \eqref{hypo gen} allows to use the  generation estimate from Theorem \ref{Th L1}, implying, for $k > \max\{0,1-\zeta\}$, 
	\begin{align}\label{x gen}
		\|f\|_{L^1_{k+\zeta+1}}(t) & \leq  \max\left\{ {E}_{k+\zeta+1},  \mathcal{E}_{k+\zeta+1}   \right\}
		+ \max\left\{ a_{k+\zeta+1},  a^*_{k+\zeta+1} \right\} t^{-\frac{k+\g-1}{\g}} \nonumber
		\\ & \leq \max\left\{ {E}_{k+\zeta+1},  \mathcal{E}_{k+\zeta+1}   \right\}
		+ \max\left\{ a_{k+\zeta+1},  a^*_{k+\zeta+1} \right\} t_0^{-\frac{k+\g-1}{\g}} =: x, 
	\end{align}
	valid for any $ t>t_0>0$.  With this  choice of $x$,  \eqref{full B} becomes a constant  depending on $t_0$,
	\begin{equation}\label{B0}
		B_{t_0} =	\sup_{t>t_0}B[f,f]\x, \quad x \ \text{chosen in \eqref{x gen}}. 
	\end{equation}
	From  \eqref{ODE 2} this leads to an ODI
	\begin{equation}\label{ODE gen}
		\ \partial_t \| f   \|^p_{L^p_{k}}  \leq 	-   \frac{p\, A[f]}{2 \constLpO^{ \,\zeta/(k \, p)}}    \| f    \|^{p(1+\frac{\zeta}{k p})}_{L^p_k}
		+  p\,   B_{t_0}, \quad t>t_0,
	\end{equation}
	for which an ODE comparison principle on the time interval $ (t_0,\infty)$ can be applied, see e.g. Lemma A.3 from \cite{Alonso-Colic-Gamba},  leading to 
	the statement  \eqref{Lp gen stat} with constants
	\begin{equation*}
		\constE =	 \left(\frac{2\, B_{t_0}   }{A[f]}\right)^{\frac{k}{k+\zeta/p}}  \constLpO^{\frac{\zeta}{k p+\zeta}}, \quad \consta =  \constLpO \left(   \frac{2 \,  k\, }{\zeta A[f]}\right)^{\frac{kp}{\zeta}}.
	\end{equation*}
	Part 1.(b), the case $p=\infty$, follows the same strategy as $L^\infty$-part of Theorem \ref{Th prop Lp}. First,  \eqref{Lp propagation inf} implies
	\begin{equation*}
		\lim_{p\rightarrow\infty} \constLpO^{1/p} = \max\left\{  \| f_0    \|_{L^\infty}, \beta^\infty_0  \right\}=:\constLpinf,
	\end{equation*}
	where $ \beta^\infty_0 $ is the constant \eqref{beta inf} evaluated at $k=0$. Then,
	from \eqref{B0},
	\begin{equation*}
		\lim_{p\rightarrow\infty} B_{t_0}^{1/p} = \sup_{t>t_0} \beta^\infty_k(x) =: \constEinf, \quad x \ \text{chosen in \eqref{x gen}}.  
	\end{equation*}
	Thus, taking the limit in \eqref{Lp gen stat} as $p \rightarrow \infty$ and denoting
	\begin{equation*}
		\constainf = \constLpinf \left(   \frac{2 \,  k\, }{\zeta A[f]}\right)^{\frac{k}{\zeta}}
	\end{equation*}
	implies the statement \eqref{Linf gen stat}.
	
	In part 2,  the assumption \eqref{hypo prop} allows to improve the constants and in particular to take $t_0=0$.  Namely, starting from \eqref{ODE 2},  $B[f,f]\x$ is taken with $x$ as in  \eqref{x Lpk prop} and becomes  a numerical constant allowing to  write the equivalent of \eqref{ODE gen} valid on the whole time interval, for $p\in(1,\infty)$,
	\begin{equation}\label{ODI low k}
		\ \partial_t \| f   \|^p_{L^p_{k}}  \leq 	-   \frac{p\, A[f]}{2 \constLpO^{ \,\zeta/(k \, p)}}    \| f    \|^{p(1+\frac{\zeta}{k p})}_{L^p_k}
		+  p\,   B, \quad t>0.
	\end{equation}
	Then, the same computations as in the first part allow to conclude the theorem. 
\end{proof}

\begin{remark}
	Note that the condition $p \, \alpha>-1$ for $p\in(1,\infty)$, and the condition $\alpha \geq 0$ for $p=\infty$, of Theorems \ref{Th prop Lp} and \ref{Th gen Lp}, are natural conditions, since they apply to the  steady state $f(v,I) \propto I^{\alpha} e^{-(|v|^2 + I) }$ itself.
\end{remark}

\begin{remark}[on the range for $p$]\label{Remark rhoq} It would be interesting to study the finiteness of the constant \eqref{cond sing r} for a particular choice of the collision kernel upper bound \eqref{assump-tilde-B}. The typical choice is $\tilde{b}^{ub}(r, R)=1$. Then, depending on the value of $\alpha$, the following two restrictions hold on the range of $p$, see \cite[Remark 6.3]{Alonso-Colic}:
	\begin{equation*}
		\begin{split}
			(i)& \ \text{if} \ \ \alpha < \g/2, \quad \text{then} \quad p<\frac{1+\g/2}{\g/2-\alpha},\\
			(ii)&  \ \text{if} \ \ \alpha < -1/2, \quad \text{then} \quad p<-\frac{1}{(2\alpha+1)}.
		\end{split}
	\end{equation*}
	When $p=\infty$, the finiteness of the constant $\rho_1$ from \eqref{cond sing r} requires $\alpha > \g/2$ and $ \alpha > -1/2$. 
	The upcoming section  lists physical values of $\alpha$ and $\zeta$ for some gases. Then, the corresponding restriction on the range of $p$ can be computed. Results are presented in Table \ref{Table:exp 3}.
\end{remark}

\section{Physical relevance of the collision kernel}\label{Sec: phys}

The analysis of the space-homogeneous Boltzmann equation \eqref{Cauchy} relies on the assumption \eqref{assumpt B factor} regarding the collision kernel. A natural question that arises is whether this assumption has a suitable physical interpretation. One approach to address it is to derive moment equations from the space-inhomogeneous Boltzmann equation and in particular to evaluate the collision operator for that specific collision kernel. The evaluation allows to derive  models for transport coefficients that naturally depend on the collision kernel parameters. Consequently, the question of the physical validity of the collision kernel reduces to the question whether the values of these parameters can be determined to match the measured properties of gas transport.

In this Section, we review results of  \cite{Djordj-Colic-Spa,Djordj-Colic-Torr,Djordj-Obl-Colic-Torr,Colic-Simic-non-poly} that  compute moments of the collision operator convexly combining the  operator \eqref{coll operator general} with the frozen counterpart \cite{Alonso-Colic-PSPDE} accounting only for frozen collisions of polyatomic gases in which internal energy of each colliding particle remains invariant, when the energy law in \eqref{coll CL} is replaced by  $|v'|^2 +    |v'_*|^2 = |v|^2 +    |v_*|^2$,   $ I' = I$,    $I'_* =  I_*$. More precisely, for a convex factor $\omega \in [0,1]$, the following collision operator is studied  
\begin{equation}\label{coll op convex}
	Q^\omega(f,f)(v,I) := 	\omega \ Q(f,f)(v,I) + (1-\omega) \	Q^{\text{f}}(f,f)(v,I), 
\end{equation}
where the frozen collision operator is
\begin{multline}\label{coll operator frozen}
	Q^{\text{f}}(f,g)(v,I) = \int \left\{ f(v',I)g(v'_*,I_*)   - f(v,I)g(v_*, I_*) \right\} \\ \times  \mathcal{B}^{\,\text{f}}(v,v_*,I,I_*,\sigma)  \,\md \sigma \, \md v_* \, \md I_*,
\end{multline}
and the primed quantities are defined by
\begin{equation}\label{coll rules frozen}
	v'  = \frac{v+v_*}{2} + \frac{|v-v_*|}{2} \sigma,  \qquad
	v'_{*} =  \frac{v+v_*}{2}  -   \frac{|v-v_*|}{2} \sigma, \quad \sigma \in \mathbb{S}^2.
\end{equation}
The collision kernel  $\mathcal{B}^{\,\text{f}}(v,v_*,I,I_*,\sigma)\geq 0$   satisfies the usual  micro-reversibility rules,
$
\mathcal{B}^{\,\text{f}}(v,v_*,I,I_*,\sigma)  =  \mathcal{B}^{\,\text{f}}(v',v'_*,I,I_*,\hat{u})  = \mathcal{B}^{\,\text{f}}(v_*,v,I_*,I,-\sigma). 
$ The frozen collision operator is shown to play a critical role in recovering the correct value of the Prandtl number \cite{Djordj-Colic-Torr}. We refer to \cite{Alonso-Colic-PSPDE} for the study of some properties of the operator $Q^\omega(f,f)$.

\subsection{Choice of the collision kernel}\label{Sec: coll kernel form}

The following collision kernel satisfying assumption \eqref{assump-tilde-B} is prescribed, for  $\eta\geq 0$ and a dimension constant $K\geq0$,
\begin{multline}\label{assumpt B factor phys}
	\mathcal{B}(v,v_*,I,I_*,\param)   =  K  \ \frac{2 \Gamma(2\alpha+7/2)}{\sqrt{\pi} \Gamma(\alpha+1)^2} \   \\ \times \left[  R^{\frac{\zeta}{2}} |v-v_*|^\zeta  + \eta  \left( r (1-R)\frac{I}{m} \right)^{\frac{\zeta}{2}} + \eta \left( (1-r) (1-R)\frac{I_*}{m} \right)^{\frac{\zeta}{2}}   \right],
\end{multline}
where the multiplicative factor serves to achieve consistency with the constant collision kernel   \cite{Djordj-Colic-Torr} or to normalize the effect of $d_\alpha(r,R)$. Its frozen counterpart reads, for $\eta_{\text{f} } \geq 0$,
\begin{equation}\label{assumpt B factor phys frozen}
	\mathcal{B}^{\text{f}}(v,v_*,I,I_*,\sigma) 
	=  K  \left[  \frac{ |v-v_*|^{2\zeta}}{\left( \frac{4}{m} E \right)^{\zeta/2}}  + \eta_{\text{f} } \frac{I^{\zeta} + I_*^{\zeta} }{(mE)^{\zeta/2}}   \right],
\end{equation} 
where $E$ is the collisional energy \eqref{coll rules}.

\subsection{Transport coefficients as moments of the  collision operator}

Extended thermodynamics and Chapman-Enskog asymptotics  \cite{Ruggeri-Sugiyama,Torr}   suggest model for transport coefficients --  shear viscosity $\mu$, bulk viscosity $\nu$ and thermal conductivity $\kappa$ --  in terms of moments of the collision operator, namely,
\begin{equation}\label{visco}
	\mu  =  \frac{p}{P_\sigma}, \quad \text{and} \quad \nu  =   \frac{4(\alpha+1)}{6\alpha+15}  \frac{p}{P_\Pi}, 
\end{equation}
while for $\kappa$ we consider two distinct models: one in which the heat flux is divided into translational and internal components (the 17-moment model) and second one in which the heat flux is treated as a single variable (the 14-moment model),
\begin{equation}\label{capa}
	\begin{split}
		\kappa^{(14)}  &=  \frac{k_B}{m} p \, \left(\alpha + \tfrac{7}{2}\right) \,   \frac{1}{P_{q^{(tot)}}}, \quad \text{and} \\ 
		\kappa^{(17)}  &=  \frac{k_B}{m} \,  p  \frac{\tfrac52 (P_s^{(s)}  - P_s^{(q)}) + (\alpha+1) (P_q^{(q)}  - P_q^{(s)}) }{ P_q^{(q)} P_s^{(s)} - P_q^{(s)} P_s^{(q)}}, 
	\end{split}
\end{equation}
where $p=\tfrac{\rho}{m} k_B T$ is the hydrodynamic  gas pressure.  Moments of the collision operator \eqref{coll op convex} appearing as coefficients 
$P_\sigma$, 
$P_\Pi$, 
$P_{q^{(tot)} }$, 
$P_q^{(q)}$ 
$P_q^{(s)}$,
$P_s^{(q)}$, 
$P_s^{(s)}$
are explicitly computed \cite{Djordj-Colic-Torr-math-2} for a given distribution function ansatz (corresponding to 14- and 17-moment models) and the collision kernel from Section \ref{Sec: coll kernel form}. They depend  on the   collision operator parameters, namely on $\alpha>-1$, $\zeta\geq 0$,  $K>0$, $\omega \in [0,1]$, $\eta, \eta_{\text{f}} \geq 0$. The aim is to determine their values for a specific gas in order to match the measurements for the above mentioned transport coefficients \cite{NIST}, which provides a physical interpretation to the Boltzmann operator and the collision kernel satisfying assumptions \eqref{assumpt B factor} of the space homogeneous problem.

\subsection{Comparison to experimental data}

The parameter $\alpha>-1$ appearing in the collision operator \eqref{coll operator general} is assumed constant, which physically means that the attention is restricted  to gases with constant specific heats, also called \emph{polytropic} or \emph{calorically perfect} gases \cite{Colic-Simic-non-poly}. More precisely,    $\alpha$ is related to the dimensionless specific heat $\hat{c}_v $ via 
\begin{equation}\label{alpha}
	\alpha = \hat{c}_v  - \tfrac{5}{2}.
\end{equation} 
Examples of gases which behave as polytropic over a significant temperature range around room temperature of $300$K are N$_2$, O$_2$, NO, CO, H$_2$. 



The next task is to provide physical values for the parameters of the collision kernel. We first consider $\zeta$ and $K$, and then $\omega$ and  $\eta$, $\eta_{\text{f}}$. 

For isobaric processes at pressures $p=$1bar and $p=$0.092bar=69mmHg,  experimental data for temperature  dependency of shear viscosity and  thermal conductivity on  the polytropic temperature interval is collected. At 300K, their values are denoted with  $\mu_0^{(exp)}$, $\kappa_0^{(exp)}$. To determine $\zeta$, these data are then fitted with the power-law model  for either (i) shear viscosity $\mu$ or (ii) thermal conductivity $\kappa$, which  in both cases (appropriately scaled) have the same shape for hard-potentials type of collision kernel  \cite{Djordj-Colic-Spa}. 
The constant $K$  in the units of $\mu$Pa$\cdot$s  \cite{Colic-Simic-non-poly} serves to match data at $T_0=$300K. Thus, two possible scenarios to determine $\zeta$ and $K$  are
\begin{equation}\label{shear model}
	\begin{split}
		&\text{Scenario} \ (i)  \ \	\frac{\mu(T)}{\mu(T_0)}  =  \left( \frac{T}{T_0} \right)^{1-\frac{\zeta}{2}}\!\!, \ K_\mu = \frac{\mu(T_0)}{\mu_0^{\text(exp)} \times 10^{-6}},   \\
		&\text{Scenario} \   (ii) \ \	\frac{\kappa^{(14)}(T)}{\kappa^{(14)}(T_0)}  =  \frac{\kappa^{(17)}(T)}{\kappa^{(17)}(T_0)}   = \left( \frac{T}{T_0} \right)^{1-\frac{\zeta}{2}}\!\!, \ K_\kappa = \frac{\kappa(T_0)}{\kappa_0^{\text(exp)} \times 10^{-6}}.
	\end{split}
\end{equation}
The remaining parameters  $\omega$ and $\eta$, $\eta_{\text{f}}$ are determined to match the measurements at 300K of the Prandtl number defined by
\begin{equation}\label{Pr exp}
	\text{Pr}^{\text(exp)}= \left(\alpha+\tfrac{7}{2}\right) \frac{k_B}{m}\frac{\mu_0^{\text(exp)}}{\kappa_0^{\text(exp)}},
\end{equation}
and/or the estimated value of ratio of bulk to shear  viscosity \cite{Cramer}. 
Transport coefficients \eqref{visco}--\eqref{capa} offer two Prandtl number models \cite{Djordj-Colic-Torr-RGD},
\begin{equation}\label{Pr model}
	\text{Pr}^{(14)} =  \frac{P_{q^{(tot)}}}{P_\sigma}, \qquad 	\text{Pr}^{(17)} = \frac{ \left(\alpha+\frac{7}{2}\right)}{P_\sigma} \frac{\tfrac52 (P_s^{(s)}  - P_s^{(q)}) + (\alpha+1) (P_q^{(q)}  - P_q^{(s)}) }{ P_q^{(q)} P_s^{(s)} - P_q^{(s)} P_s^{(q)}}.
\end{equation}
Ideally, equalizing the right-hand sides of the Prandtl number model \eqref{Pr model} and measurements \eqref{Pr exp},  and the bulk viscosity \eqref{shear model} to data \cite{Cramer}, one gets a system of two equations in three variables $\omega$, $\eta$ and $\eta_{\text{f}}$. The computations  for the collision kernel chosen in Section \ref{Sec: coll kernel form} show that it is only possible for CO to find a combination of these parameters solving the given system. For other gases, one has to choose to match either Prandtl number (14- or 17-model) or bulk viscosity, which is possible for many combinations of   $\omega$, $\eta$ and $\eta_{\text{f}}$.

In Table \ref{Table:exp 2}, we present illustrative examples of physical values  of the collision operator parameters. 
More precisely, 
for the  low pressure $p_{\text{low}}$ = 0.092bar = 69mmHg and the standard pressure $p_{\text{st}}=1$bar, we provide
\begin{itemize}
	\item temperature ranges over which  gases behave as polytropic (taken as the range over which the relative change of $\hat{c}_v $ is less than 5\%);
	\item values of $\alpha$ matching the specific heat at 300K;
	\item values of $\zeta$ for one of the two scenarios: Scenario (i) matching temperature dependence of shear viscosity or Scenario  (ii) matching temperature dependence of thermal conductivity;
	\item values of $\omega$, for pre-determined values of $\eta, \eta_{\text{f} }$ taken equal for simplicity, matching (1) Prandtl number at  300K for both  14- and 17-moment model or/and (2) ratio of bulk to shear viscosity. Only for CO it is possible to solve (1) and (2) simultaneously.  
\end{itemize} 
Computations   are performed using the \emph{Mathematica} code \cite{Djordj-Colic-Torr-math-2}, which offers an user-friendly experience in investigating different measurements settings to be matched and exemplary values of the prescribed collision kernel parameters.

\begin{table}
	\centering
	\caption{Physical values (rounded to 4 decimals) of the parameters $\alpha, \g, \omega, \eta = \eta_\text{f}$ of the collision kernel \eqref{assumpt B factor phys}-\eqref{assumpt B factor phys frozen} matching the experimental data for polytropic gases on the temperature interval  	$[300,T_1]$K. Gray cell denotes no available data, while ``\texttimes"  means that data cannot be reproduced  and ``*'' stands for matching with Prandtl number (14 or 17) and (${\nu}/{\mu}$) simultaneously.}
	\addtolength{\tabcolsep}{-0.1em}
	\begin{tabular}{| c | c | c | c | c || c | c | c | c | c | c |} \hline
		\multicolumn{5}{| c ||}{  	   }	& N$_2$ & O$_2$ & NO & CO & H$_2$\\ \hline\hline
		\multicolumn{5}{| c ||}{  	 \begin{tabular}{@{}c@{}}polytropic int. end point $T_1$	\end{tabular}  } &  $600$ &  $ 430 $ & $550$ & $550$ & $890$  \\ \hline \hline
		\multirow{16}{*}{$p_{\text{low}}$} &  \multicolumn{4}{ c || }{  
			$\alpha$
		}  & 0.0035 & 0.0348 & 0.0901 & 0.0053 & -0.0304  \\ 	\cline{2-10}
		&  \multicolumn{4}{ c || }{  
			$\delta = 2(\alpha+1)$
		}  &2.0071 & 2.0696 & 2.1803 & 2.0107 & 1.9392 \\ 	  \hhline{|*1{~}|*{9}-|}  
		& \multirow{7}{*}{ 	
			\begin{tabular}{@{}c@{}} 	(i)\\	temp.  \\
				dep. \\ of $\mu$
		\end{tabular} } &   \multicolumn{3}{ c ||}{  
			$ \zeta $ 
		}    &   0.5329 & 0.4411 & 0.424 & 0.5215 & 0.6076     \\ \hhline{|~|~|*{8}-}
		& & \multirow{4}{*}{\begin{tabular}{@{}c@{}}  	(i.1) \\[3pt] Pr 
		\end{tabular}} &  \multirow{2}{*}{14} & $\eta$  & 0.03 & 0.05 & 0.2 & 0.1064*&  \texttimes   \\ \cline{5-10}
		& &								& 								& $\omega$  & 0.1763&0.0076 & 0.1144& 0.4267*&  \texttimes  \\  \cline{4-10}
		& &							 	&  \multirow{2}{*}{17} & $\eta$   & 0.03 & 0.05 & 0.2 & 0.1102* &  \texttimes \\ 	\cline{5-10}
		&	&								& 								& $\omega$ & 0.1971 & 0.0835 & 0.1931 &0.4263* &  \texttimes \\   \hhline{|~|~|*{8}-|}
		&								   	&  \multirow{2}{*}{\begin{tabular}{@{}c@{}} (i.2)\\$ {\nu}/{\mu}$ 
		\end{tabular}} &  \multicolumn{2}{ c ||}{ $\eta$}  & 0.03 &\cellcolor{gray!25} &\cellcolor{gray!25} & *& 1  \\ \hhline{|*3{~|}*{7}-|}
		&									& 								&  \multicolumn{2}{ c ||}{ $\omega$ }  & 0.3334 & \cellcolor{gray!25} & \cellcolor{gray!25} & * & 0.0071 \\	
		\hhline{|~|*{9}-|} 
		& \multirow{7}{*}{\begin{tabular}{@{}c@{}}	(ii)\\	temp.  \\	dep. \\ of $\kappa$	\end{tabular}} &   \multicolumn{3}{ c || }{  
			$ \zeta $ 
		}  &   0.408 & 0.254 & \cellcolor{gray!25}  & 0.3606 & 0.5329     \\ \hhline{|*2{~|}*{8}-|}
		& & \multirow{4}{*}{  	\centering	\begin{tabular}{@{}c@{}}  	(i.1) \\[3pt] Pr 
		\end{tabular}   } &  \multirow{2}{*}{14} & $\eta$  & 0.03& \texttimes& \cellcolor{gray!25}  & 0.1& \texttimes   \\ \hhline{|*4{~|}*{7}-|}
		& &								& 								& $\omega$  & 0.0056 & \texttimes & \cellcolor{gray!25} &0.2379 &  \texttimes  \\ \hhline{|*3{~|}*{8}-|}
		& &							 	&  \multirow{2}{*}{17} & $\eta$ & 0.03 & \texttimes & \cellcolor{gray!25}  &0.1 &  \texttimes \\ \hhline{|*4{~|}*{7}-|}
		&	&								& 								& $\omega$ & 0.0844 & \texttimes &\cellcolor{gray!25}  & 0.2785 &  \texttimes 
		\\ \hhline{|*2{~|}*{8}-|}
		&								   	&  \multirow{2}{*}{  	\centering	\begin{tabular}{@{}c@{}}  	(i.2) \\$ {\nu}/{\mu}$ 
		\end{tabular}   } &  \multicolumn{2}{ c ||}{ $\eta$}  &0.03 &\cellcolor{gray!25} &\cellcolor{gray!25} &0.1 &  1 \\ \hhline{|*3{~|}*{7}-|}
		&									& 								&  \multicolumn{2}{ c ||}{ $\omega$ }  & 0.3142 & \cellcolor{gray!25} & \cellcolor{gray!25} &0.398 & 0.0069 
		\\  \hline\hline
		\multirow{16}{*}{$p_{\text{st}}$} &  \multicolumn{4}{ c || }{  
			$\alpha$
		}  &0.0085 & 0.0402 & 0.0901 & 0.0109 & -0.0298    \\ 	\cline{2-10}
		&  \multicolumn{4}{ c || }{  
			$\delta = 2(\alpha+1)$
		} & 2.0169 & 2.0804 & 2.1803 & 2.0217 & 1.9404  \\ 	  \hhline{|~|*{9}-|}  
		& \multirow{7}{*}{ 
			\centering	\begin{tabular}{@{}c@{}}  	(i)\\	temp.  \\
				dep. \\ of $\mu$
		\end{tabular} } &   \multicolumn{3}{ c ||}{  
			$ \zeta $ 
		}    & 0.5346 & 0.443 & 0.424 & 0.5234 & 0.6077    \\ \hhline{|*2{~|}*{8}-|}
		& & \multirow{4}{*}{ 	\centering	\begin{tabular}{@{}c@{}}  	(i.1) \\[3pt] Pr 
		\end{tabular} } &  \multirow{2}{*}{14} & $\eta$  &0.03 &0.05 &0.2 & 0.1143*& \texttimes   \\ \cline{5-10}
		& &								& 								& $\omega$  &  0.1857 &0.0171 & 0.1144& 0.428* &  \texttimes  \\  \cline{4-10}
		& &							 	&  \multirow{2}{*}{17} & $\eta$ & 0.03 &0.05 &0.2 & 0.1185* &  \texttimes \\ 	\cline{5-10}
		&	&								& 								& $\omega$ &0.2051 & 0.09 & 0.1931 & 0.4276*&  \texttimes \\    \hhline{|*2{~|}*{8}-|}
		&								   	&  \multirow{2}{*}{ 
			\centering	\begin{tabular}{@{}c@{}}  	(i.2) \\$ {\nu}/{\mu}$ \\ 
			\end{tabular} 
		} &  \multicolumn{2}{ c ||}{ $\eta$}  & 0.03&\cellcolor{gray!25} &\cellcolor{gray!25} & *& 1  \\ \hhline{|*3{~|}*{7}-|}
		&									& 								&  \multicolumn{2}{ c ||}{ $\omega$ }  & 0.3349 & \cellcolor{gray!25} & \cellcolor{gray!25} & * & 0.0071   \\ \hhline{|~|*{9}-|}
		& \multirow{7}{*}{	\centering	\begin{tabular}{@{}c@{}} (ii)\\	temp.  \\
				dep. \\ of $\kappa$
		\end{tabular} } &   \multicolumn{3}{ c || }{  
			$ \zeta $ 
		}  & 0.4106 & 0.2584 & \cellcolor{gray!25} & 0.3637 & 0.534    \\ \hhline{|*2{~|}*{8}-|}
		& & \multirow{4}{*}{ 	\centering	\begin{tabular}{@{}c@{}}  	(i.1) \\[3pt] Pr 
		\end{tabular} } &  \multirow{2}{*}{14} & $\eta$  & 0.03&\texttimes & \cellcolor{gray!25} & 0.1 & \texttimes   \\ \hhline{|*4{~|}*{6}-|}
		& &								& 								& $\omega$  & 0.0169 &\texttimes & \cellcolor{gray!25} & 0.2503 &  \texttimes  \\ \hhline{|*3{~|}*{7}-|}
		& &							 	&  \multirow{2}{*}{17} & $\eta$ & 0.03& \texttimes& \cellcolor{gray!25} & 0.1&  \texttimes \\  \hhline{|*4{~|}*{6}-|}
		&	&								& 								& $\omega$ & 0.092 &\texttimes& \cellcolor{gray!25} & 0.288  &  \texttimes
		\\ \hhline{|*2{~|}*{8}-|}
		&								   	&  \multirow{2}{*}{ 	\centering	\begin{tabular}{@{}@{}c@{}@{}}  	(i.2) \\$ {\nu}/{\mu}$ 
		\end{tabular}  } &  \multicolumn{2}{ c ||}{ $\eta$}  &0.03 &\cellcolor{gray!25} &\cellcolor{gray!25} & 0.1& 1  \\ \hhline{|*3{~|}*{7}-|}
		&									& 								&  \multicolumn{2}{ c ||}{ $\omega$ }  &0.3157 & \cellcolor{gray!25} & \cellcolor{gray!25} &0.4002 &		0.0069
		\\  \hline 
	\end{tabular}
	\label{Table:exp 2}
\end{table}	

\begin{table}
	\centering
	\caption{For the physical values for $\alpha$ and $\zeta$ presented in Table \ref{Table:exp 2},  the restriction $(i)$ from the Remark \ref{Remark rhoq} applies and thus the range of $p$ for which Theorems \ref{Th prop Lp} and \ref{Th gen Lp} hold is restricted to $p \in (1, \frac{1+\g/2}{\g/2-\alpha}=:\bar{p})$.  This table presents the numerical values for $\bar{p}$.  }
	\addtolength{\tabcolsep}{-0.04em}
	\begin{tabular}{| c | c | c || c | c | c | c | c | c | c | c |} \hline
		\multicolumn{3}{| c ||}{  	   }	& N$_2$ & O$_2$ & NO & CO & H$_2$\\ \hline\hline
		\multirow{5}{*}{$p_{\text{low}}$} &  \multicolumn{2}{ c || }{  
			$\alpha$
		}  & 0.0035 & 0.0348 & 0.0901 & 0.0053 & -0.0304  \\ 	\cline{2-8}
		& \multirow{2}{*}{  (i)	}&   
		$ \zeta $ 
		&   0.5329 & 0.4411 & 0.424 & 0.5215 & 0.6076     \\ \hhline{|~|~|*{8}-}
		& & $\bar{p}$  &   4.8166 & 6.5718 & 9.945 & 4.9364 & 3.9014
		\\	
		\hhline{|~|*{7}-|} 
		& \multirow{2}{*}{(ii)} &   
		$ \zeta $ 
		&   0.408 & 0.254 & \cellcolor{gray!25}  & 0.3606 & 0.5329     
		\\ 	\hhline{|~|~|*{6}-|} 
		&   & 
		$ \bar{p} $ 
		&   6.0056 & 12.224 & \cellcolor{gray!25}   & 6.7457 & 4.2665
		\\  \hline\hline
		\multirow{5}{*}{$p_{\text{low}}$} &  \multicolumn{2}{ c || }{  
			$\alpha$
		}  &0.0085 & 0.0402 & 0.0901 & 0.0109 & -0.0298    \\ 	\cline{2-8}
		& \multirow{2}{*}{  (i)	}&   
		$ \zeta $ 
		& 0.5346 & 0.443 & 0.424 & 0.5234 & 0.6077   \\ \hhline{|~|~|*{8}-}
		& & $\bar{p}$  &   4.8964 & 6.7383 & 9.945 & 5.0298 & 3.9079 
		\\	
		\hhline{|~|*{7}-|} 
		& \multirow{2}{*}{(ii)} &   
		$ \zeta $ 
		& 0.4106 & 0.2584 & \cellcolor{gray!25} & 0.3637 & 0.534  
		\\ 	\hhline{|~|~|*{6}-|} 
		&   & 
		$ \bar{p} $ 
		&  6.124 & 12.687 &  \cellcolor{gray!25}   & 6.9127 & 4.269
		\\  \hline 
	\end{tabular}
	\label{Table:exp 3}
\end{table}

\subsection*{Acknowledgments}
This manuscript is a compendium  of the lecture notes for the short course held at the  \emph{XII Summer School on Methods and Models of Kinetic Theory} 2024 in Pesaro  and presents a review of recent results obtained within different collaborations. 
Both authors express their  gratitude to the co-authors of the papers presented in this review and   to the organizers of the Summer School.
R. A. thanks TAMUQ internal funding research grant 470242-25650. M. \v{C}. thanks Grants Nos. 451-03-137/2025-03/200125 and 451-03-136/2025-03/200125 from the Ministry of Science, Technological Development and Innovation of the Republic of Serbia, and gratefully acknowledges support from the Field of Excellence COLIBRI and the hospitality of the Department of Mathematics and Scientific Computing, University of Graz, where part of this work was conducted.

\end{document}